\documentclass[10pt]{article}
   \vfuzz2pt
   \usepackage{graphicx}
   \usepackage{epsfig}
   \usepackage{amssymb}
   \usepackage{amsmath}
   \usepackage{amsthm}
   \voffset=-2cm
   \setlength{\textwidth}{16.5cm}
   \setlength{\textheight}{23.1cm}
   \setlength{\topmargin}{3cm}
   \addtolength{\topmargin}{-1in}
   \addtolength{\topmargin}{-1\headheight}

   \addtolength{\topmargin}{-1\headsep}
   \setlength{\oddsidemargin}{2.5cm}
   \addtolength{\oddsidemargin}{-1in}

   \setlength{\evensidemargin}{2.5cm}
   \addtolength{\evensidemargin}{-1in}

   \newtheorem{lemma}{Lemma}[section]
   \newtheorem{theorem}{Theorem}[section]
   \newtheorem{proposition}{Proposition}[section]

   \newtheorem{remark}{Remark}[section]

   {\end{description}}

   {\hfill $\bullet$ \\}


   \newcommand{\be}{\begin{equation}}
   \newcommand{\ee}{\end{equation}}


   %

   %

   %

   %
   \begin{document}
    \title{A MacCormack Method for Complete Shallow Water Equations with Source Terms}
   \author{Eric Ngondiep$^{\text{\,a\,b}},$ Alqahtani Rubayyi$^{\text{\,b\,}},$ Jean C. Ntonga$^{\text{\,a\,}}$}
   \date{$^{\text{\,a\,}}$\small{Department of Mathematics and Statistics, College of Science, Al-Imam Muhammad Ibn Saud\\ Islamic University
        (IMSIU), $90950$ Riyadh $11632,$ Kingdom of Saudi Arabia.}\\
     \text{\,}\\
       $^{\text{\,b\,}}$\small{Hydrological Research Centre, Institute for Geological and Mining Research, 4110 Yaounde-Cameroon.}\\
     \text{,}\\
        \textbf{Email addresses:} ericngondiep@gmail.com, engondiep@imamu.edu.sa, rtalqahtani@imamu.edu.sa,
        ntong$a_{-}$jc@yahoo.fr}
   \maketitle

   \textbf{Abstract.}
    In the last decades, more or less complex physically-based hydrological models, have been developed to solve the shallow water equations or their approximations using various numerical methods. The MacCormack method was developed for simulating overland flow with spatially variable infiltration and microtopography using the hydrodynamic flow equations. The basic MacCormack scheme is enhanced when it uses the method of fractional steps to treat the friction slope or a stiff source term and to upwind the convection term in order to control the numerical oscillations and stability. In this paper we describe, the MacCormack scheme for $1$D complete shallow water equations with source terms, analyze the stability condition of the method and we provide the convergence rate of the algorithm. This work improves some well known results deeply studied in the literature which concern the Saint-Venant problem and it represents an extension of the time dependent shallow water equations without source terms. The numerical evidences consider the rate of convergence of the method and compares the numerical solution respect to the analytical one.\\
    \text{\,}\\

   \ \noindent {\bf Keywords:} 1D shallow water equations, source terms, MacCormack scheme, Fourier stability analysis, stability condition, convergence rate.\\
   \\
   {\bf AMS Subject Classification (MSC). 65M10, 65M05}.

  \section{Introduction}\label{In}
   Most open-channel flows of interest in the physical, hydrological, biological, engineering and social sciences   are unsteady and can be considered to be one-dimensional ($1$D). In this paper we are interested in the numerical solutions of one-dimensional complete shallow water equations with source terms introduced in \cite{8nnnn} and still widely used in modelling flows in rivers, lakes and coastal areas as well as atmospheric and oceanic flows in certain regimes. In the case of a prismatic channel, the complete shallow water equations with source terms reads as follows
        \begin{equation}\label{1}
        \left\{
          \begin{array}{ll}
            \frac{\partial A}{\partial t}+\frac{\partial Q}{\partial x}=r, & \hbox{for $t\in(0,T_{1})\text{\,\,and\,\,}x\in\Omega=(0,L)$},\\
            \text{\,}\\
            \frac{\partial Q}{\partial t}+g\frac{A}{T}\frac{\partial A}{\partial x}+\frac{\partial}{\partial x}
            \left(\frac{Q^{2}}{A}\right)=gA(S_{0}-S_{f}),&
            \hbox{for $t\in(0,T_{1})\text{\,\,and\,\,}x\in\Omega=(0,L)$},
          \end{array}
        \right.
      \end{equation}
      where the bottom (or bed) slope $(S_{0})$ and the friction slope $(S_{f})$ (see \cite{8nnnn}) are defined as
      \begin{equation}\label{9}
        S_{0}=\frac{\overline{\tau} P}{\rho gA}\text{\,\,\,\,and\,\,\,\,}S_{f}=\frac{Q|Q|}{K^{2}},
      \end{equation}
       $r=r(x,t)$ is the lateral inflow per unit length along the channel, $T_{1}$ is the time interval length, $L$ is the rod interval length, $A=A(t,x)$ is the cross section, $Q=Q(t,x)$ is the discharge, $g$ is the acceleration of gravity. $T=T(x,t)$ is the top width assumed to be constant, $\overline{\tau}$ is the average shear stress on the water from the channel boundary, $\rho$ is the fluid density,
       $P=P(x,y(t,x))$ is the wetted perimeter, i.e., the the length of the boundary of the cross section that is under water for a given height of water $(y).$ As in \cite{8nnnn}, the conveyance for a compact channel is defined by
      \begin{equation}\label{10}
        K:=K(x,y)=\frac{1.49}{n_{1}}A(x,y)R(x,y)^{2/3},
      \end{equation}
       where $R=A/P$ is the hydraulic radius and $n_{1}$ is the manning's roughness coefficient.\\

        By straightforward computations, it is not hard to see that system $(\ref{1})$ becomes
      \begin{equation}\label{2}
        \frac{\partial W}{\partial t}+\frac{\partial F}{\partial x}=S,
      \end{equation}
      where $W=col(A,Q),$ $F=col(Q,\frac{gA^{2}}{2T}+\frac{Q^{2}}{A}),$ and $S=col(r,gA(S_{0}-S_{f})).$ Equation $(\ref{2})$ emphasizes the conservative character of system $(\ref{1})$.\\

    The one-dimensional shallow water equations with source terms $(\ref{2})$ are highly nonlinear and therefore do not have global analytical solutions \cite{8nnnn}. When solving the system of balance laws $(\ref{2})$ numerically, one typically faces several difficulties. One difficulty stems from the fact that many physically relevant solutions of $(\ref{2})$ are small perturbations of steady-state solutions.
   So, using a wrong balance between the flux and geometric source term in equation $(\ref{2}),$ the solution may develop spurious waves of a magnitude that can become larger than the exact solution. Another drawback is when the cross section is very small. In that case, even small numerical oscillation in the computed solution can result in a very large discharge, which is not only physically irrelevant, but cause the numerical scheme to break down. To overcome these numerical challenges, one needs to use a numerical scheme that is both well-balanced and positivity preserving.\\

   A number of well-balanced and positivity preserving numerical methods frequently used in the models based on the shallow water equations have been proposed in literature \cite{clg,lcg}, or on the boussinesq equations, which are reduced to shallow water equations, in order to simulate breaking waves \cite{glc,zzl,kp}. Although the MacCormack scheme is less accurate than the more recent methods, it is commonly used for engineering problems due to its greater simplicity. So, we have to approximate the exact solution of problem $(\ref{2})$ by a numerical method based on the MacCormack scheme. This algorithm is a class of higher order finite difference methods (second order convergent), which provides an effective way of joining spectral method for accuracy and robust characteristics of finite difference schemes. For example, to compute unsteady flow specifically in the presence of discontinuity, inherent dissipation and stability, one such widely used method is MacCormack method \cite{21db}. This technique has been used successfully to provide time-accurate solution for fluid flow and aeroacoustics problems. The applications of this technique to $1$D shock tube and $2$D acoustic scattering problems provide good result while comparing with the analytical solution. MacCormack introduced a simpler variation of Lax-Wendroff scheme which is basically a two-step scheme with second order Taylor series expansion in time and fourth order in spatial accuracy \cite{21db,22db}. This algorithm is computationally efficient and easy to implement which can be appropriate to obtain reliable results. By using
   this scheme with two nodes, the flow field can be simulated for unsteady flows especially for shallow water problem in the presence of discontinuity and strict gradient conditions. Furthermore, to capture fluid flow in transition over long periods of time and distance, numerical spatial derivative are required to be determined in few grid points while error controlled can be accurately computed. The authors \cite{en1,en2,en3,en4,23db} extended MacCormack scheme \cite{21db} to an implicit-explicit scheme by coupling the original MacCormack approach and the Crank-Nicolson method, and to implicit compact differencing scheme by splitting the derivative operator of a central compact scheme into one-sided forward and backward operators. The one-sided nature of the MacCormack technique is an essential advantage especially when severe gradients are present.\\

   In \cite{13nnnn,17nnnn,14nnnn} the authors compared the Lax-Wendroff scheme to many numerical methods of high order of accuracy, such as, the linear Central Weighed Essential Non-Oscillatory (CWENO) scheme which is superior to full nonlinear CWENO method, to high-resolution TVD conservative schemes along with high order Central Schemes for hyperbolic systems of conservative Laws \cite{4nnnn,6nnnn} and to Central-upwind schemes for the shallow water system \cite{15nnnn}. In a search for stable and more accurate shock capturing numerical approach, they observed that the Lax-Wendroff approach is one of the most frequently encountered in the
   literature related to classical Shock-capturing schemes. However, difficulties have been reported when trying to include source terms in the discretization and to keep the second order accuracy at the same time \cite{21nnnn}. The MacCormack approach which is a predictor-corrector version of the Lax-Wendroff algorithm provides a reasonably good result at discontinuities. This method is much easy to apply than the Lax-Wendroff scheme because the Jacobian does not appear. The amplification factor and stability constraint almost are the same as presented for the Lax-Wendroff method (see \cite{3nnnn}, P. $202$-$206$, case of inviscid burgers equation). It is also important to note that the solutions obtained for the same problem at the same courant number are different from those obtained using the Lax-Wendroff scheme. This is due both to the switched differencing in the predictor and the corrector and the nonlinear nature of the governing PDE. It should be noted that reversing the differencing in the predictor and corrector steps leads to quite different results. Another motivation of this work comes from the fact that, the explicit MacCormack time discretization for the complete nonlinear Burgers equation (which can be served as a model equation for many nonlinear PDEs: Navier-Stokes equations, Stokes-Darcy equations, Parabolized Navier-Stokes equations,...) gives a suitable stability restriction which can be used with an appropriate safety factor (see \cite{3nnnn},
   P. $227$-$228$). For the MacCormack solver, as with other explicit schemes, it requires a time step limitation. In general, the maximum time step (with respect to stability) allowable in the MacCormack scheme applied to linear hyperbolic equations is limited by the CFL condition, as are all explicit finite difference methods. The overland flow equations are nonlinear, however, and a rigorous stability analysis for these equations is exceedingly difficult. The source terms place additional and problem-dependent restrictions on the maximum admissible time step for stability. Therefore, the CFL condition can only be considered as a general guideline here, and the maximum allowable time step for any particular problem will be less than predicted by the CFL condition and determined by numerical experimentation (see \cite{fr}, page $223$).\\

   In a recent work \cite{nnnn}, we addressed the problem of mathematical model of complete shallow water equations with source terms in which we provided the stability analysis of the Lax-Wendroff scheme. In this paper we are still interested by the stability analysis, but in the sense of MacCormack. In particular, we consider the case where the channel is prismatic and the interesting result is that the algorithm is at least of second order accuracy in time and space, while the stability limitation does not coincide with the CFL condition widely studied in the literature for hyperbolic partial differential equations (for example: linear
   advection equation, wave equation, inviscid burgers equations, etc,...). However, while the stability requirement is highly unusual, the result has a potential positive implication since the stability constraint obtained in this work controls the CFL condition. Indeed the nice feature is that, as required in a stability context, we normally find a linear stability condition which can be considered as a necessary condition of stability from a Fourier stability analysis. On the other hand, it comes from this analysis that an instability occurs when $|\Delta t|$ is greater than some $|\Delta t|_{\max}$ which can be viewed as $(\Delta t)_{CFL}.$ More specifically, the attention is focused in the following three items:

   \begin{description}
     \item[(i1)] full description of the MacCormack method for $1$D complete shallow water equations with source terms;
     \item[(i2)] stability restriction of the algorithm: this item together with item (i1) represent our original contributions and they improve the works studied in \cite{nnnn,6nnnn,20nnnn};
     \item[(i3)] Numerical experiments concerning the convergence of the method, the simulation of the numerical solution obtained by the MacCormack approach along with the analytical solution, and regarding the effectiveness of this method according to the theoretical analysis given in the first two items.
   \end{description}

   The paper is organized as follows. Section $\ref{sms}$ deals with the full description of MacCormack method for $1$D complete shallow water equations with source terms. The stability analysis of this scheme is deeply studied in section $\ref{sas}$. In Section $\ref{ne},$ some numerical experiments which consider the convergence of the scheme and some simulations are presented and discussed. We draw the general conclusion and present the future direction of works in section $\ref{cfw}$.

   \section{Full description of MacCormack method}\label{sms}
   In this section, we give a detailed description of the MacCormack algorighm for system $(\ref{2}).$ First, we recall that the MacCormack scheme is a two step explicit method which consists in predictor-corrector steps. The scheme uses the forward difference in predictor step while the corrector step considers the backward difference. Since our aim is to analyze both stability and rate of convergence of the method, without loss of generality we should use a constant time step $\Delta t$ and mesh size $\Delta x$. Let $N$ and $M$ be two
   positive integers. Noticing $x_{j}=j\Delta x,$ $t^{n}=n\Delta t$ and let the superscript (resp., the subscript) denoting the time level (resp., space level) of the approximation. We denote by $W_{j}^{n}=(A_{j}^{n},Q_{j}^{n})^{T}$ the approximate solution of equations $(\ref{2}),$ obtained at time $t^{n}$ and at point $x_{j},$ using the MacCormack algorithm and $W(t^{n},x_{j})=(A(t^{n},x_{j}),Q(t^{n},x_{j}))^{T}$
   the value of the analytical solution of system $(\ref{2})$ at discrete time $t^{n}$ and at discrete point $x_{j}.$ Furthermore, the domain $\Omega =(0,L)$ is subdivided into $M+1$ grid points $\{x_{j}:\text{\,}j=0,1,...,M\},$ while the time interval $(0,T)$ is subdivided into $N+1$ grid points $\{t^{n}:\text{\,}n=0,1,...,N\}.$ Using this, the full description of MacCormack method for the system $(\ref{2})$ reads: Given $W^{n}_{j}$, find an approximate solution (the solution can be considered as weak) $w^{n+1}_{j},$ for $0\leq n\leq N-1$ and $0\leq j\leq M,$ satisfying\\
      \text{\,}\\
      Predictor step: solve equation $(\ref{3})$ for predicted value
      \begin{equation}\label{3}
        W_{j}^{\overline{n+1}}=W_{j}^{n}-\frac{\Delta t}{\Delta x}(F^{n}_{j+1}-F_{j}^{n})+\Delta tS_{j}^{n},
      \end{equation}
      Corrector step: use the predicted value obtained in equation $(\ref{3})$ to compute the exact one
      \begin{equation}\label{4}
        W_{j}^{n+1}=\frac{1}{2}\left[W_{j}^{n}+W_{j}^{\overline{n+1}}-\frac{\Delta t}{\Delta x}(F^{\overline{n+1}}_{j}
        -F_{j}^{\overline{n+1}})+\Delta tS_{j}^{\overline{n+1}}\right].
      \end{equation}

      \begin{proposition}\label{p1}
      Let $n$ and $j$ be two nonnegative integers. Setting $t^{n}=n\Delta t$ and $x_{j}=j\Delta x,$ where $\Delta t$ and $\Delta x$
      are time step and mesh size, respectively. The MacCormack scheme for system $(\ref{2})$ is given by\\

       Predictor step: solve equation $(\ref{64})$ for predicted values $A_{j}^{\overline{n+1}}$ and $Q_{j}^{\overline{n+1}}$
      \begin{equation}\label{64}
        A_{j}^{\overline{n+1}}=A_{j}^{n}-\frac{\Delta t}{\Delta x}(Q^{n}_{j+1}-Q_{j}^{n})+\Delta t r_{j}^{n},
      \end{equation}
      and
      \begin{equation}\label{65}
        Q_{j}^{\overline{n+1}}=Q_{j}^{n}-\frac{\Delta t}{\Delta x}\left\{\frac{g}{2T}((A^{n}_{j+1})^{2}-(A_{j}^{n})^{2})
        +\frac{(Q^{n}_{j+1})^{2}}{A^{n}_{j+1}}-\frac{(Q^{n}_{j})^{2}}{A^{n}_{j}}\right\}+
        gP\Delta t\left(\frac{\overline{\tau}}{\rho g}-\frac{n_{1}^{2}}{1.49^{2}}P^{\frac{1}{3}}\frac{Q_{j}^{n}|Q^{n}_{j}|}
        {(A_{j}^{n})^{\frac{7}{3}}}\right).
      \end{equation}
      Corrector step: use $A_{j}^{\overline{n+1}}$ and $Q_{j}^{\overline{n+1}}$ obtained in equations $(\ref{64})$-$(\ref{65})$
      to compute $A_{j}^{n+1}$ and $Q_{j}^{n+1}$
      \begin{equation}\label{66}
        A_{j}^{n+1}=\frac{1}{2}\left\{A_{j}^{n}+A_{j}^{\overline{n+1}}-\frac{\Delta t}{\Delta x}(Q^{\overline{n+1}}_{j}-
        Q_{j-1}^{\overline{n+1}})+\Delta t r_{j}^{\overline{n+1}}\right\},
      \end{equation}
         and
      \begin{equation*}
        Q_{j}^{n+1}=\frac{1}{2}\left\{Q_{j}^{n}+Q_{j}^{\overline{n+1}}-\frac{\Delta t}{\Delta x}\left\{\frac{g}{2T}
        \left[(A^{\overline{n+1}}_{j})^{2}-(A_{j-1}^{\overline{n+1}})^{2}\right]+\frac{(Q^{\overline{n+1}}_{j})^{2}}
        {A^{\overline{n+1}}_{j}}-\frac{(Q^{\overline{n+1}}_{j-1})^{2}}{A^{\overline{n+1}}_{j-1}}\right\}\right.+
      \end{equation*}
      \begin{equation}\label{67}
        \left.gP\Delta t\left(\frac{\overline{\tau}}{\rho g}-\frac{n_{1}^{2}}{1.49^{2}}P^{\frac{1}{3}}\frac{Q_{j}^{\overline{n+1}}
        |Q^{\overline{n+1}}_{j}|}{(A_{j}^{\overline{n+1}})^{\frac{7}{3}}}\right)\right\}.
      \end{equation}
      \end{proposition}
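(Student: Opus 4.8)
The plan is to derive the stated component form directly, by substituting the explicit expressions for the vectors $W$, $F$ and $S$ into the vector predictor--corrector equations $(\ref{3})$ and $(\ref{4})$ and reading off the two scalar components. Recall from $(\ref{2})$ that $W=col(A,Q)$, $F=col\bigl(Q,\frac{gA^{2}}{2T}+\frac{Q^{2}}{A}\bigr)$ and $S=col\bigl(r,gA(S_{0}-S_{f})\bigr)$. Since the first component of $F$ is simply $Q$ and the first component of $S$ is $r$, projecting the predictor step $(\ref{3})$ onto the first coordinate immediately yields $(\ref{64})$. Projecting onto the second coordinate produces the discrete momentum equation whose flux part is exactly the difference of $\frac{g}{2T}A^{2}+\frac{Q^{2}}{A}$ appearing in $(\ref{65})$, so that the whole computation reduces to rewriting the second component of the source vector in the required form.

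The only nontrivial step is therefore to express $gA(S_{0}-S_{f})$ as the right-hand side of $(\ref{65})$. First I would treat the bed-slope contribution: using $(\ref{9})$, $gAS_{0}=gA\cdot\frac{\overline{\tau}P}{\rho gA}=\frac{\overline{\tau}P}{\rho}=gP\cdot\frac{\overline{\tau}}{\rho g}$, which is the first term inside the parentheses of $(\ref{65})$. Next I would handle the friction contribution $gAS_{f}=gA\,\frac{Q|Q|}{K^{2}}$ by inserting the conveyance $(\ref{10})$. Writing $R=A/P$, one has $K^{2}=\frac{1.49^{2}}{n_{1}^{2}}A^{2}R^{4/3}=\frac{1.49^{2}}{n_{1}^{2}}\frac{A^{10/3}}{P^{4/3}}$, whence $gAS_{f}=g\frac{n_{1}^{2}}{1.49^{2}}P^{4/3}\frac{Q|Q|}{A^{7/3}}=gP\cdot\frac{n_{1}^{2}}{1.49^{2}}P^{1/3}\frac{Q|Q|}{A^{7/3}}$, which is precisely the second term in the parentheses of $(\ref{65})$. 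Evaluating all quantities at the grid node $(t^{n},x_{j})$ then gives $(\ref{65})$.

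Finally, I would repeat the identical substitution for the corrector step $(\ref{4})$. The only differences are that the flux and source are now evaluated at the predicted state $W^{\overline{n+1}}$, and that the spatial difference is the backward difference between nodes $j$ and $j-1$ rather than the forward difference used in the predictor. Projecting onto the two coordinates and applying the same algebraic simplification of the source term reproduces $(\ref{66})$ and $(\ref{67})$, which completes the derivation.

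I expect the main obstacle to be purely bookkeeping rather than conceptual: correctly combining the exponents in the friction term, that is, tracking the factor $A$ coming from $gA$, the factor $A^{2}$ from $K^{2}$, and the factor $R^{4/3}=(A/P)^{4/3}$, so that the net powers $A^{-7/3}$ and $P^{4/3}$ emerge. One must also keep in mind that, for the prismatic channel with constant top width considered here, the quantities $T$, $P$, $\overline{\tau}$, $\rho$ and $n_{1}$ are held constant and hence pass unchanged through the difference operators, which is what makes the scalar reduction of $(\ref{3})$--$(\ref{4})$ clean.
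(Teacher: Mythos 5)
Your proposal is correct and follows essentially the same route as the paper's own proof: project the vector predictor--corrector relations $(\ref{3})$--$(\ref{4})$ onto the two components of $W=col(A,Q)$, then simplify the source term via $gAS_{0}=gP\frac{\overline{\tau}}{\rho g}$ and, using $(\ref{10})$ with $R=A/P$, $gAS_{f}=gP\frac{n_{1}^{2}}{1.49^{2}}P^{1/3}\frac{Q|Q|}{A^{7/3}}$, exactly the computation the paper performs in its equations $(\ref{11})$--$(\ref{13})$. The only cosmetic difference is ordering: the paper first writes the componentwise scheme with $(S_{0})_{j}^{n}$ and $(S_{f})_{j}^{n}$ left unsimplified and then substitutes, whereas you fold the two steps together.
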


      \begin{proof}
       Since $W=col(A,Q)$, using relations $(\ref{3})$-$(\ref{4})$ we get\\
      Predictor step: solve equation $(\ref{5})$-$(\ref{6})$ for predicted values $A_{j}^{\overline{n+1}}$ and $Q_{j}^{\overline{n+1}}$
      \begin{equation}\label{5}
        A_{j}^{\overline{n+1}}=A_{j}^{n}-\frac{\Delta t}{\Delta x}(Q^{n}_{j+1}-Q_{j}^{n})+\Delta t r_{j}^{n},
      \end{equation}
       \begin{equation}\label{6}
        Q_{j}^{\overline{n+1}}=Q_{j}^{n}-\frac{\Delta t}{\Delta x}\left\{\frac{g}{2T}((A^{n}_{j+1})^{2}-(A_{j}^{n})^{2})
        +\frac{(Q^{n}_{j+1})^{2}}{A^{n}_{j+1}}-\frac{(Q^{n}_{j})^{2}}{A^{n}_{j}}\right\}+g\Delta t A_{j}^{n}
        \left((S_{0})_{j}^{n}-(S_{f})_{j}^{n}\right),
      \end{equation}
      Corrector step: use $A_{j}^{\overline{n+1}}$ and $Q_{j}^{\overline{n+1}}$ obtained in equations $(\ref{5})$-$(\ref{6})$
      to compute $A_{j}^{n+1}$ and $Q_{j}^{n+1}$
      \begin{equation}\label{7}
        A_{j}^{n+1}=\frac{1}{2}\left\{A_{j}^{n}+A_{j}^{\overline{n+1}}-\frac{\Delta t}{\Delta x}(Q^{\overline{n+1}}_{j}-
        Q_{j-1}^{\overline{n+1}})+\Delta t r_{j}^{\overline{n+1}}\right\},
      \end{equation}
       \begin{equation*}
        Q_{j}^{n+1}=\frac{1}{2}\left\{Q_{j}^{n}+Q_{j}^{\overline{n+1}}-\frac{\Delta t}{\Delta x}\left\{\frac{g}{2T}
        \left[(A^{\overline{n+1}}_{j})^{2}-(A_{j-1}^{\overline{n+1}})^{2}\right]+\frac{(Q^{\overline{n+1}}_{j})^{2}}
        {A^{\overline{n+1}}_{j}}-\frac{(Q^{\overline{n+1}}_{j-1})^{2}}{A^{\overline{n+1}}_{j-1}}\right\}\right.+
      \end{equation*}
      \begin{equation}\label{8}
        \left.g\Delta t A_{j}^{\overline{n+1}}\left((S_{0})_{j}^{\overline{n+1}}-(S_{f})_{j}^{\overline{n+1}}\right)\right\}.
      \end{equation}

      Substituting equation $(\ref{10})$ into relation $(\ref{9})$ yields
      \begin{equation}\label{11}
        S_{f}=\frac{n_{1}^{2}}{1.49^{2}}\frac{Q|Q|}{A^{2}R^{4/3}}=\frac{n_{1}^{2}}{1.49^{2}}\frac{Q|Q|}{A^{10/3}}P^{4/3}.
      \end{equation}
      In way similar, substituting $(\ref{9})$ and $(\ref{11})$ into equations $(\ref{6})$ and $(\ref{8})$ results in
       \begin{equation}\label{12}
        Q_{j}^{\overline{n+1}}=Q_{j}^{n}-\frac{\Delta t}{\Delta x}\left\{\frac{g}{2T}((A^{n}_{j+1})^{2}-(A_{j}^{n})^{2})
        +\frac{(Q^{n}_{j+1})^{2}}{A^{n}_{j+1}}-\frac{(Q^{n}_{j})^{2}}{A^{n}_{j}}\right\}+
        gP\Delta t\left(\frac{\overline{\tau}}{\rho g}-\frac{n_{1}^{2}}{1.49^{2}}P^{\frac{1}{3}}
        \frac{Q_{j}^{n}|Q^{n}_{j}|}{(A_{j}^{n})^{\frac{7}{3}}}\right),
      \end{equation}
         and
        \begin{equation*}
        Q_{j}^{n+1}=\frac{1}{2}\left\{Q_{j}^{n}+Q_{j}^{\overline{n+1}}-\frac{\Delta t}{\Delta x}\left\{\frac{g}{2T}
        \left[(A^{\overline{n+1}}_{j})^{2}-(A_{j-1}^{\overline{n+1}})^{2}\right]+\frac{(Q^{\overline{n+1}}_{j})^{2}}
        {A^{\overline{n+1}}_{j}}-\frac{(Q^{\overline{n+1}}_{j-1})^{2}}{A^{\overline{n+1}}_{j-1}}\right\}\right.+
      \end{equation*}
      \begin{equation}\label{13}
        \left.gP\Delta t\left(\frac{\overline{\tau}}{\rho g}-\frac{n_{1}^{2}}{1.49^{2}}
        P^{\frac{1}{3}}\frac{Q_{j}^{\overline{n+1}}|Q^{\overline{n+1}}_{j}|}{(A_{j}^{\overline{n+1}})^{\frac{7}{3}}}\right)\right\}.
      \end{equation}
      An assembling of relations $(\ref{5}),$ $(\ref{7}),$ $(\ref{12})$ and $(\ref{13})$ provides the full description
      of MacCormack scheme which is given by\\
      \text{\,}\\
      Predictor step:
      \begin{equation*}
        A_{j}^{\overline{n+1}}=A_{j}^{n}-\frac{\Delta t}{\Delta x}(Q^{n}_{j+1}-Q_{j}^{n})+\Delta t r_{j}^{n},
      \end{equation*}
      and
      \begin{equation*}
        Q_{j}^{\overline{n+1}}=Q_{j}^{n}-\frac{\Delta t}{\Delta x}\left\{\frac{g}{2T}((A^{n}_{j+1})^{2}-(A_{j}^{n})^{2})
        +\frac{(Q^{n}_{j+1})^{2}}{A^{n}_{j+1}}-\frac{(Q^{n}_{j})^{2}}{A^{n}_{j}}\right\}+
        gP\Delta t\left(\frac{\overline{\tau}}{\rho g}-\frac{n_{1}^{2}}{1.49^{2}}P^{\frac{1}{3}}\frac{Q_{j}^{n}|Q^{n}_{j}|}
        {(A_{j}^{n})^{\frac{7}{3}}}\right).
      \end{equation*}
      Corrector step: use $A_{j}^{\overline{n+1}}$ and $Q_{j}^{\overline{n+1}}$ obtained above to compute $A_{j}^{n+1}$ and $Q_{j}^{n+1}$
      \begin{equation*}
        A_{j}^{n+1}=\frac{1}{2}\left\{A_{j}^{n}+A_{j}^{\overline{n+1}}-\frac{\Delta t}{\Delta x}(Q^{\overline{n+1}}_{j}-
        Q_{j-1}^{\overline{n+1}})+\Delta t r_{j}^{\overline{n+1}}\right\},
      \end{equation*}
         and
      \begin{equation*}
        Q_{j}^{n+1}=\frac{1}{2}\left\{Q_{j}^{n}+Q_{j}^{\overline{n+1}}-\frac{\Delta t}{\Delta x}\left\{\frac{g}{2T}
        \left[(A^{\overline{n+1}}_{j})^{2}-(A_{j-1}^{\overline{n+1}})^{2}\right]+\frac{(Q^{\overline{n+1}}_{j})^{2}}
        {A^{\overline{n+1}}_{j}}-\frac{(Q^{\overline{n+1}}_{j-1})^{2}}{A^{\overline{n+1}}_{j-1}}\right\}\right.+
      \end{equation*}
      \begin{equation*}
        \left.gP\Delta t\left(\frac{\overline{\tau}}{\rho g}-\frac{n_{1}^{2}}{1.49^{2}}P^{\frac{1}{3}}\frac{Q_{j}^{\overline{n+1}}
        |Q^{\overline{n+1}}_{j}|}{(A_{j}^{\overline{n+1}})^{\frac{7}{3}}}\right)\right\}.
      \end{equation*}
      \end{proof}
       Here, the terms $A^{\overline{n+1}}$ and $Q^{\overline{n+1}}$ are "predicted" values of $A$ and $Q,$ respectively, at the time level
      $n+1.$ Assuming further that the superscript $\overline{n+1}$ is a time level, it is easy to see that MacCormack algorithm
      is a tree level method, so the initial data $A^{0}$ and $Q^{0}$ are needed to begin the algorithm. However, appropriate initial and
      boundary conditions must be specified. Further, the presence of cross section in the denominator of several terms disallows zero cross
      sections,
      therefore, a finite minimum cross section is assigned to each node that is ponded. It is primarily the $A^{10/3}$ in the denominator
      of the friction slope term given by relation $(\ref{11})$ that limits the magnitude of the minimum cross section and discharge. When
      the cross sections are very small, the friction slope is very large compared with the other terms in second equation of system $(\ref{1})$.
      As cross sections increase rapidly during the early stages of flow development, the friction slope term magnitude changes much faster
      than the other terms. This phenomenon renders the second equation in system $(\ref{1})$ stiff and severely limits the maximum admissible
      time step for stability. Indeed, this phenomenon likely forced previous researchers to use very small time steps relative to their mesh
      size (courant number $\ll1$) and keep lateral inflows and initial cross sections large \cite{fr}.\\

   \section{Stability analysis of MacCormack scheme}\label{sas}
   This section deals with the stability analysis of the MacCormack numerical scheme for $1$D complete shallow water equations
   with source terms in the case where the channel is prismatic. First, we present a rainfall hydrograph test, based on experimental
   measurements realized thanks to the SATREPS project METHOD in a flume at the rain simulation facility at Benou\'{e}-Garoua
   (Cameroon). The flume is $1150m$ long with a slope of $4\%$. The simulation duration is $1$s. The rainfall intensity $I(x,t)$
   is described by
   \begin{equation}\label{hg}
    I(x,t)=\left\{
             \begin{array}{ll}
               1.18\times10^{-5}m/s & \hbox{if $(t,x)\in[0;\text{\,}1]\times[0;\text{\,}1]$;} \\
               0 & \hbox{otherwise.}
             \end{array}
           \right.
   \end{equation}
   For this test, as there is no rain on the last $150$m, we have a wet/dry transition. The measured output is an hydrograph,
   that is a plot of the discharge versus time. The mathematical model for this ideal overland flow is the following: we
   consider a uniform plane catchment whose overall length in the direction of flow is $L$. The surface roughness and slope are
   assumed to be invariant in space and time. We consider a constant rainfall excess such that
   \begin{equation}\label{hg1}
    r(x,t)=\left\{
             \begin{array}{ll}
               I & \hbox{if $t_{0}\leq t\leq T_{1}$,\text{\,} $0\leq x\leq L$;} \\
               0 & \hbox{otherwise,}
             \end{array}
           \right.
    \end{equation}
     where $I$ is the rainfall intensity and $T_{1}$ is the final time of the rainfall excess. According to relations $(\ref{hg})$ and
    $(\ref{hg1})$ we assume in the following that $r$ is more less that $A$ and $Q,$ i.e., $r\ll A,Q$. Furthermore, Proposition $\ref{p2}$
    gives the "temporary" stability constraint of the MacCormack algorithm described in section $\ref{sms}.$

    \begin{proposition}\label{p2}
   The numerical scheme $(\ref{66})$ is stable if estimate $(\ref{11e})$ holds.
   \begin{equation}\label{11e}
   \frac{\Delta t^{3}}{\Delta x^{2}}\left(1+\frac{2\Delta t}{3}\Gamma_{0}\mu^{n}|A^{n}|^{-\frac{4}{3}}\right)\leq
   3\Gamma_{0}^{-1}(\mu^{n})^{-3}|A^{n}|^{\frac{4}{3}}|\phi|^{-2},
   \end{equation}
   where $\mu=\frac{|Q|}{|A|},$ $\Gamma_{0}=\frac{gn_{1}^{2}}{1.49^{2}}P^{\frac{4}{3}}$ and $\phi=k\Delta x,$ where $k\neq0$ is the wave number.
   \end{proposition}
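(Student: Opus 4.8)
The plan is to establish $(\ref{11e})$ by a von Neumann (Fourier) stability analysis of the frozen, linearized predictor--corrector pair $(\ref{64})$--$(\ref{67})$. Since the scheme is nonlinear, I would first freeze the coefficients at a reference state $(A^{n},Q^{n})$, so that the flux Jacobian $J=\partial F/\partial W$ becomes a constant matrix and the friction source $-gAS_{f}=-\Gamma_{0}Q|Q|A^{-7/3}$ is replaced by its linearization. The derivative of $Q|Q|$ is $2|Q|$, so the linearized friction coefficient along the discharge component is $-2\Gamma_{0}|Q|A^{-7/3}=-2\Gamma_{0}\mu^{n}|A^{n}|^{-4/3}$, which is precisely the quantity appearing inside the correction factor of $(\ref{11e})$. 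Because the excerpt assumes $r\ll A,Q$, the rainfall source in the mass equation is dropped, leaving the friction term in the discharge equation as the only surviving source contribution; here $(\ref{66})$ is understood as representative of the full coupled predictor--corrector system.

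Next I would insert the Fourier mode $W_{j}^{n}=\widehat{W}^{n}e^{\ima j\phi}$ with $\phi=k\Delta x$. The forward difference of the predictor becomes multiplication by $(e^{\ima\phi}-1)$ and the backward difference of the corrector by $(1-e^{-\ima\phi})$, so every spatial operator collapses to a scalar multiplier and the frozen scheme becomes a two-stage linear recursion. Substituting the predictor relation into the corrector yields a single-step update $\widehat{W}^{n+1}=G(\phi,\Delta t,\Delta x)\,\widehat{W}^{n}$, where $G$ is the $2\times2$ amplification matrix obtained by composing the two stages. Its homogeneous, source-free part reproduces the familiar Lax--Wendroff--type multiplier carried by the Jacobian $J$ (consistent with the remark in the Introduction that the two amplification factors nearly coincide), while the friction linearization adds a contribution proportional to $\Delta t$ and to $\Gamma_{0}\mu^{n}|A^{n}|^{-4/3}$ along the discharge direction.

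I would then impose the von Neumann requirement that the spectral radius of $G$ not exceed one (up to the usual $1+\mathcal{O}(\Delta t)$ tolerance that a dissipative source permits). Computing the relevant eigenvalue of $G$, I would retain the dominant convective contribution---scaling like $\Delta t^{2}/\Delta x^{2}$ times a function of $\phi$ through $|e^{\ima\phi}-1|^{2}=2(1-\cos\phi)$, whose leading behavior is $|\phi|^{2}$---together with the source contributions of order $\Delta t$ and $\Delta t^{2}$. Collecting terms and rearranging the inequality so that the convective factor carrying the $|\phi|^{2}$ dependence is isolated produces the form $(\ref{11e})$: the cubic balance $\Delta t^{3}/\Delta x^{2}$ and the factor $1+\tfrac{2\Delta t}{3}\Gamma_{0}\mu^{n}|A^{n}|^{-4/3}$ both emerge from the interaction between the source and the convective terms, and moving $|\phi|^{2}$ to the denominator gives the $|\phi|^{-2}$ on the right-hand side.

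The principal obstacle I expect is twofold. First, the friction nonlinearity $Q|Q|$ is only $C^{1}$ and is non-smooth at $Q=0$, so the linearization must be justified away from the dry state---consistent with the finite-minimum-cross-section remark following Proposition $\ref{p1}$---and the coefficient $2\Gamma_{0}|Q|A^{-7/3}$ used with care. Second, because $(\ref{2})$ is a $2\times2$ system, the amplification object is a matrix rather than a scalar; extracting the clean scalar estimate $(\ref{11e})$ requires either diagonalizing $G$ through $J$ or isolating the discharge-driven eigenvalue (whose convective coefficient $2\mu$ and source coefficient $2\Gamma_{0}\mu|A|^{-4/3}$ match the quantities in $(\ref{11e})$) while discarding the subdominant coupling. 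Carrying out this reduction while tracking the exact powers of $\Delta t$, $\Delta x$ and $|A^{n}|$ is where the bulk of the careful bookkeeping will lie.
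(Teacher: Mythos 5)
Your plan stays inside the same general family as the paper's argument (a von Neumann analysis in the regime $0<|\phi|\ll\pi$), but two of its concrete choices would prevent you from arriving at estimate $(\ref{11e})$. First, Proposition $\ref{p2}$ is a statement about the cross-section update $(\ref{66})$ \emph{alone}: the paper substitutes the predictor into the corrector to obtain a one-step scalar update for $A_{j}^{n+1}$ (its equation $(\ref{4e})$), inserts the ansatz $A_{j}^{n}=e^{at^{n}}e^{ikx_{j}}$, $Q_{j}^{n}=e^{bt^{n}}e^{ikx_{j}}$ with \emph{two separate amplitudes}, and plugs these directly into the nonlinear terms --- there is no Jacobian linearization anywhere; the momentum scheme $(\ref{67})$ is handled separately in Proposition $\ref{p3}$. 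In that scalar update the pressure term $\frac{g}{2T}A^{2}$ enters only through a second difference, so its symbol is $O(\phi^{2})$ and it is dropped in the truncation $(\ref{E12})$; this is precisely why no gravity-wave speed $c=\sqrt{gA/T}$ appears in $(\ref{11e})$. Your $2\times2$ spectral-radius computation cannot reproduce this: the eigenvalues of the frozen flux Jacobian are $u\pm c$, so $c$ enters the imaginary part of the eigenvalues of your matrix $G$ at order $\phi$, on equal footing with $\mu$, and for shallow water $c$ is generally \emph{not} subdominant. An honest execution of your plan yields a CFL-type condition in $(|u|+c)\Delta t/\Delta x$ plus source corrections, not the $c$-free condition $(\ref{11e})$.

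Second, your linearization of the friction changes the constants, and the key final step is missing. In the paper the factor $\frac{2\Delta t}{3}\Gamma_{0}\mu^{n}|A^{n}|^{-\frac{4}{3}}$ traces to the symbol of the backward difference of the \emph{un-linearized} quotient $Q|Q|/A^{7/3}$: under direct substitution the node shift produces $1-e^{\frac{4}{3}i\phi}\approx-\frac{4}{3}i\phi$ (the $4/3$ is the exponent gap between $Q$ and $A^{7/3}$), and after the corrector's factor $\frac{1}{2}$ this gives the coefficient $\frac{2}{3}$, with $|\gamma_{2}|=\Gamma_{0}(\mu^{n})^{2}|A^{n}|^{-\frac{4}{3}}$ and $|\gamma_{1}|=\mu^{n}$. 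Your route (derivative $2|Q|$ of $Q|Q|$, ordinary difference symbol $i\phi$) produces the coefficient $1$ rather than $\frac{2}{3}$, and the quantities you claim ``match'' $(\ref{11e})$ --- a convective coefficient $2\mu$ and a source coefficient $2\Gamma_{0}\mu|A|^{-\frac{4}{3}}$ --- differ from the paper's $\mu$ and $\frac{2}{3}\Gamma_{0}\mu|A|^{-\frac{4}{3}}$ by factors of $2$ and $3$. Finally, you never explain how the cubic balance $\Delta t^{3}/\Delta x^{2}$ arises. In the paper it comes from a very specific manoeuvre: the requirement $|e^{a\Delta t}|^{2}\leq1$ is split into the two separate inequalities $(\ref{E13})$, essentially $\frac{\Delta t^{2}}{\Delta x}|\gamma_{2}||\phi|\leq3$ and $\frac{\Delta t}{\Delta x}\bigl(1+\frac{2\Delta t}{3}|\gamma_{2}/\gamma_{1}|\bigr)\leq|\gamma_{1}|^{-1}|\phi|^{-1}$, which are then \emph{multiplied} to yield $(\ref{E15})$ and hence $(\ref{11e})$; this multiplicative combination (itself only a heuristic necessary condition, as Remark $\ref{r2}$ concedes) is the step that manufactures $\Delta t^{3}/\Delta x^{2}$ and $|\phi|^{-2}$, and ``collecting terms and rearranging'' an eigenvalue bound will not produce it.
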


   \begin{proof}
   First of all, we give an explicit form of $A^{n+1}.$ Combining relations $(\ref{64})$ and $(\ref{65}),$ simple computations provide
   \begin{equation*}
    Q_{j}^{\overline{n+1}}-Q_{j-1}^{\overline{n+1}}=Q_{j}^{n}-Q_{j-1}^{n}-\frac{\Delta t}{\Delta x}\left\{\frac{g}
    {2T}\left[(A^{n}_{j+1})^{2}-2(A_{j}^{n})^{2}+(A^{n}_{j-1})^{2}\right]+\frac{(Q^{n}_{j+1})^{2}}{A^{n}_{j+1}}
    -2\frac{(Q^{n}_{j})^{2}}{A^{n}_{j}}+\frac{(Q^{n}_{j-1})^{2}}{A^{n}_{j-1}}\right\}
   \end{equation*}
   \begin{equation}\label{1e}
    -\Delta t\frac{gn_{1}^{2}}{1.49^{2}}P^{\frac{4}{3}}\left(\frac{Q_{j}^{n}|Q^{n}_{j}|}{(A_{j}^{n})^{\frac{7}{3}}}-
        \frac{Q_{j-1}^{n}|Q^{n}_{j-1}|}{(A_{j-1}^{n})^{\frac{7}{3}}}\right),
   \end{equation}
   and
   \begin{equation}\label{2e}
    A_{j}^{n}+A_{j}^{\overline{n+1}}=2A_{j}^{n}-\frac{\Delta t}{\Delta x}\left(Q_{j+1}^{n}-Q_{j}^{n}\right)+\Delta t r_{j}^{n}.
   \end{equation}
   Substituting equations $(\ref{1e})$ and $(\ref{2e})$ into $(\ref{66})$ results in
   \begin{equation*}
    A_{j}^{n+1}=A_{j}^{n}-\frac{\Delta t}{2\Delta x}\left(Q_{j+1}^{n}-Q_{j-1}^{n}\right)+\frac{1}{2}\left(\frac{\Delta t}{\Delta x}\right)^{2}
    \left\{\frac{g}{2T}\left[(A^{n}_{j+1})^{2}-2(A_{j}^{n})^{2}+(A^{n}_{j-1})^{2}\right]+\frac{(Q^{n}_{j+1})^{2}}{A^{n}_{j+1}}\right.
   \end{equation*}
   \begin{equation}\label{3e}
    \left.-2\frac{(Q^{n}_{j})^{2}}{A^{n}_{j}}+\frac{(Q^{n}_{j-1})^{2}}{A^{n}_{j-1}}\right\}+\frac{\Delta t^{2}}{2\Delta x}
    \frac{gn_{1}^{2}}{1.49^{2}}P^{\frac{4}{3}}\left(\frac{Q_{j}^{n}|Q^{n}_{j}|}{(A_{j}^{n})^{\frac{7}{3}}}-\frac{Q_{j-1}^{n}|Q^{n}_{j-1}|}
        {(A_{j-1}^{n})^{\frac{7}{3}}}\right)+\frac{1}{2}\Delta t\left(r_{j}^{n}+r_{j}^{\overline{n+1}}\right).
   \end{equation}
   Neglecting the last term in $(\ref{3e}),$ we obtain
   \begin{equation*}
    A_{j}^{n+1}=A_{j}^{n}-\frac{\Delta t}{2\Delta x}\left(Q_{j+1}^{n}-Q_{j-1}^{n}\right)+\frac{1}{2}\left(\frac{\Delta t}{\Delta x}\right)^{2}
    \left\{\frac{g}{2T}\left[(A^{n}_{j+1})^{2}-2(A_{j}^{n})^{2}+(A^{n}_{j-1})^{2}\right]+\frac{(Q^{n}_{j+1})^{2}}{A^{n}_{j+1}}\right.
   \end{equation*}
   \begin{equation}\label{4e}
    \left.-2\frac{(Q^{n}_{j})^{2}}{A^{n}_{j}}+\frac{(Q^{n}_{j-1})^{2}}{A^{n}_{j-1}}\right\}+\frac{\Delta t^{2}}{2\Delta x}
    \Gamma_{0}\left(\frac{Q_{j}^{n}|Q^{n}_{j}|}{(A_{j}^{n})^{\frac{7}{3}}}-\frac{Q_{j-1}^{n}|Q^{n}_{j-1}|}{(A_{j-1}^{n})^{\frac{7}{3}}}\right),
    \end{equation}
    where
    \begin{equation}\label{3a}
    \Gamma_{0}=\frac{gn_{1}^{2}}{1.49^{2}}P^{\frac{4}{3}}.
    \end{equation}
    Indeed, since $|r(x,t)|<<|A(x,t)|,$ $|Q(x,t)|,$ $\forall$ $(x,t)\in[0,T_{1}]\times[0,L],$ the tracking of the last term in $(\ref{3e})$
    does not compromise the result.\\

    Since the analysis considers the Von Neumann approach, we should put $\phi=k\Delta x$ and take $A_{j}^n=e^{at^{n}}e^{ikx_{j}}$ and
     $Q_{j}^n=e^{bt^{n}}e^{ikx_{j}}$ (where $a,b\in \mathbb{C}$ with $b=b_{1}+ib_{2}$ and $a=a_{1}+ia_{2},$ where $a_{j},b_{j}\in\mathbb{R}$
    and $k$ is the wave number). For the sake of readability, we assume in the following that $a_{2}=b_{2}.$ Replacing this into relation
    $(\ref{4e})$ to get
   \begin{equation*}
    e^{a(t^{n}+\Delta t)}e^{ikx_{j}}=e^{at^{n}}e^{ikx_{j}}-\frac{\Delta t}{2\Delta x}\left(e^{bt^{n}}e^{ik(x_{j}+\Delta x)}
    -e^{bt^{n}}e^{ik(x_{j}-\Delta x)}\right)+
   \end{equation*}
   \begin{equation*}
    \frac{1}{2}\left(\frac{\Delta t}{\Delta x}\right)^{2} \left\{\frac{g}{2T}\left[e^{2at^{n}}e^{2ik(x_{j}+\Delta x)}
    -2e^{2at^{n}}e^{2ik(x_{j})}+e^{2at^{n}}e^{2ik(x_{j}-\Delta x)}\right]+e^{2(b-a)t^{n}}\right.
   \end{equation*}
   \begin{equation}\label{5e}
    \left.-2e^{2(b-a)t^{n}}+e^{2(b-a)t^{n}}\right\}+\frac{\Delta t^{2}}{2\Delta x}
    \Gamma_{0}\left(\frac{e^{bt^{n}}e^{ikx_{j}}|e^{bt^{n}}|}{e^{\frac{7}{3}at^{n}}e^{\frac{7}{3}ikx_{j}}}-
    \frac{e^{bt^{n}}e^{ik(x_{j}-\Delta x)}|e^{bt^{n}}|}{e^{\frac{7}{3}at^{n}}e^{\frac{7}{3}ik(x_{j}-\Delta x)}}\right).
    \end{equation}
    Dividing side by side relation $(\ref{5e})$ by $e^{at^{n}}e^{ikx_{j}}$ results in
   \begin{equation*}
    e^{a\Delta t}=1-\frac{\Delta t}{2\Delta x}\left(e^{i\phi}-e^{-i\phi}\right)e^{(b-a)t^{n}}+
    \frac{1}{2}\left(\frac{\Delta t}{\Delta x}\right)^{2}\frac{g}{2T}\left[e^{at^{n}}e^{ikx_{j}}e^{2i\phi}
    -2e^{at^{n}}e^{ikx_{j}}\right.
   \end{equation*}
   \begin{equation}\label{6e}
    \left.+e^{at^{n}}e^{ikx_{j}}e^{-2i\phi}\right]+\frac{\Delta t^{2}}{2\Delta x}
    \Gamma_{0}|e^{bt^{n}}|e^{(b-\frac{10}{3}a)t^{n}}e^{-\frac{7}{3}ikx_{j}}(1-e^{\frac{4}{3}i\phi}).
    \end{equation}
    Using the identities: $e^{i\phi}-e^{-i\phi}=2i\sin(\phi),$ $e^{2i\phi}-2+e^{-2i\phi}=-4\sin^{2}(\phi)$\\ and
    $1-e^{\frac{4}{3}i\phi}=2\sin^{2}(\frac{2}{3}\phi)+i\sin(\frac{4}{3}\phi),$ equation $(\ref{6e})$ becomes
    \begin{eqnarray}\label{E8}
      \notag e^{a\Delta t} &=& 1-i\frac{\Delta t}{\Delta x}\sin(\phi)e^{(b_{1}-a_{1})t^{n}}-\left(\frac{\Delta t}{\Delta x}\right)^{2}
      \frac{g}{T}\sin^{2}(\phi)e^{a_{1}t^{n}}[\cos(a_{2}t^{n}+kx_{j})+i\sin(a_{2}t^{n}+kx_{j})]\\
     \notag &-& i\frac{\Delta t^{2}}{\Delta x}\Gamma_{0}\sin(\frac{2}{3}\phi)\cos(\frac{2}{3}\phi)e^{(2b_{1}-\frac{10}{3}a_{1})t^{n}}
     \left(\cos\left(\frac{7}{3}[a_{2}t^{n}+kx_{j}]\right)-i\sin\left(\frac{7}{3}[a_{2}t^{n}+kx_{j}]\right)\right)\\
    \notag &-& 2\left(\frac{\Delta t}{\Delta x}\right)^{2}\sin^{2}(\frac{1}{2}\phi)e^{2(b_{1}-a_{1})t^{n}}\\
    \notag &=&  1-i\frac{\Delta t}{\Delta x}\mu^{n}\sin(\phi)-\left(\frac{\Delta t}{\Delta x}\right)^{2}
      \frac{g}{T}\sin^{2}(\phi)e^{a_{1}t^{n}}[\cos(a_{2}t^{n}+kx_{j})+i\sin(a_{2}t^{n}+kx_{j})]\\
     \notag &-& i\frac{\Delta t^{2}}{\Delta x}\Gamma_{0}\sin(\frac{2}{3}\phi)\cos(\frac{2}{3}\phi)e^{(2b_{1}-\frac{10}{3}a_{1})t^{n}}
     \left(\cos\left(\frac{7}{3}[a_{2}t^{n}+kx_{j}]\right)-i\sin\left(\frac{7}{3}[a_{2}t^{n}+kx_{j}]\right)\right)\\
      &-& 2\left(\frac{\Delta t}{\Delta x}\right)^{2}(\mu^{n})^{2}\sin^{2}(\frac{1}{2}\phi),
    \end{eqnarray}
    where
    \begin{equation}\label{E8a}
        \mu^{n}=\frac{|Q^{n}|}{|A^{n}|}=\frac{e^{b_{1}t^{n}}}{e^{a_{1}t^{n}}}=e^{(b_{1}-a_{1})t^{n}}.
    \end{equation}
    Putting
    \begin{equation}\label{E9a}
    \alpha_{2}=\frac{7}{3}(a_{2}t^{n}+kx_{j});\text{\,\,\,\,}\gamma_{2}=\Gamma_{0}(\mu^{n})^{2}e^{-\frac{4}{3}a_{1}t^{n}}e^{-i\alpha_{2}};
    \text{\,\,\,}\alpha_{3}=a_{2}t^{n}+kx_{j}\text{\,\,\,\,}\gamma_{1}=\mu^{n};\text{\,\,\,}\gamma_{4}=\gamma_{1}^{2};
    \text{\,\,\,}\gamma_{3}=\frac{g}{T}e^{a_{1}t^{n}}e^{i\alpha_{3}}.
    \end{equation}
    Utilizing this, equation $(\ref{E8})$ yields
     \begin{eqnarray}\label{E10}
      \notag e^{a\Delta t} &=& 1-i\frac{\Delta t}{\Delta x}\gamma_{1}\sin(\phi)-\left(\frac{\Delta t}{\Delta x}\right)^{2}
      \gamma_{3}\sin^{2}(\phi)-i\frac{\Delta t^{2}}{\Delta x}\gamma_{2}\sin(\frac{2}{3}\phi)\cos(\frac{2}{3}\phi)-
      2\left(\frac{\Delta t}{\Delta x}\right)^{2}\gamma_{4}\sin^{2}(\frac{1}{2}\phi)\\
     \notag &=& 1-\frac{\Delta t^{2}}{\Delta x}|\gamma_{2}|\sin\alpha_{2}\sin(\frac{2}{3}\phi)\cos(\frac{2}{3}\phi)
     -\left(\frac{\Delta t}{\Delta x}\right)^{2}\left[|\gamma_{3}|\cos\alpha_{3}\sin^{2}(\phi)+2|\gamma_{4}|\sin^{2}
     (\frac{1}{2}\phi)\right]-.\\
      && i\left\{\frac{\Delta t}{\Delta x}|\gamma_{1}|\sin(\phi)+\frac{\Delta t^{2}}{\Delta x}|\gamma_{2}|\cos\alpha_{2}
      \sin(\frac{2}{3}\phi)\cos(\frac{2}{3}\phi)+\left(\frac{\Delta t}{\Delta x}\right)^{2}|\gamma_{3}|\sin\alpha_{3}\sin^{2}(\phi)\right\}
    \end{eqnarray}
    Of course the aim of this report is to give the general picture of necessary condition of stability. Since the formulae can
    become quite heavy, for the sake of simplicity, we consider in all the proofs the following remark which plays a crucial role
    in our study. However, the obtained result is a linear stability condition which can be considered as a necessary condition of stability.

   \begin{remark}\label{r1}
   Since the considered problem is a nonlinear partial differential equations, the solution may contain discontinuity even if the initial
   conditions are smooth enough. To overcome this numerical challenge, we should assume that the phase angle $\phi=k\Delta x$ satisfies $|\phi|<<\pi.$
   In fact, the method could be generally stabilized by adding additional dissipation to the scheme without affecting the order of accuracy.
   \end{remark}

   Now, we must analyze some extreme cases. The extreme cases are obtained when $|\phi|=\pi,$ on the one hand, and when $\phi$ equals
   zero on the other.\\

   $\bullet$ \textbf{Case $|\phi|=\pi.$} For $|\phi|=\pi,$ it comes from equation $(\ref{E10})$ that the amplification factor becomes
   \begin{equation*}
     e^{a\Delta t}= 1+\frac{\sqrt{3}\Delta t^{2}}{4\Delta x}|\gamma_{2}|\sin\alpha_{2}-2\left(\frac{\Delta t}{\Delta x}
    \right)^{2}|\gamma_{4}|-i\frac{\sqrt{3}\Delta t^{2}}{4\Delta x}|\gamma_{2}|\cos\alpha_{2}.
   \end{equation*}
   The squared modulus of the amplification factor equals
   \begin{equation*}
     |e^{a\Delta t}|^{2}=1+\frac{3\Delta t^{4}}{16\Delta x^{2}}|\gamma_{2}|^{2}+4\left(\frac{\Delta t}{\Delta x}
    \right)^{4}|\gamma_{4}|^{2}+2\left(\frac{\sqrt{3}(\Delta t)^{2}}{4\Delta x}|\gamma_{2}|(1+\Delta t^{2}|\gamma_{4}|)\sin\alpha_{2}-
    2\left(\frac{\Delta t}{\Delta x}\right)^{2}|\gamma_{4}|\right).
   \end{equation*}
    But there exists values of $\alpha_{2}$ for which $\sin\alpha_{2}=1.$ So $|e^{a\Delta t}|^{2}>1.$ Thus the scheme is unconditionally
   unstable.\\

   $\bullet$ \textbf{Case $\phi=0.$} In that case, the amplification factor given by equation $(\ref{E10})$ provides $e^{a\Delta t}=1.$ Then the
   modulus is $|e^{a\Delta t}|=1.$ Then, the numerical scheme is neutrally stable. Thus, the MacCormack method is not dissipative in the sense of
   Kreiss \cite{hok} and when applied to complete uni-dimensional shallow water equations with source terms $(\ref{1}).$ That is, the
   computations should become unstable in certain circumstances. This instability is entirely due to the non-linearity of the equations,
   since the same scheme applied to linear shallow water equations without source terms does not diverge, although strong oscillations are
   generated (see for example, \cite{gk,21db,23db}).\\

   $\bullet$ \textbf{Case where $0<|\phi|<<\pi.$} Using the Taylor expansion around $\phi=0,$ and neglecting high-order terms, the squared
   modulus of the amplification factor given by $(\ref{E10})$ is approximated as
   \begin{equation}\label{E12}
    |e^{a\Delta t}|^{2}=\left(1-\frac{2\Delta t^{2}}{3\Delta x}|\gamma_{2}|\phi\sin\alpha_{2}\right)^{2}+
       \left(\frac{\Delta t}{\Delta x}|\gamma_{1}|+\frac{2\Delta t^{2}}{3\Delta x}|\gamma_{2}|\cos\alpha_{2}\right)^{2}\phi^{2}.
   \end{equation}
   For $|e^{a\Delta t}|^{2}$ to be less than one, the quantity $\left(1-\frac{2\Delta t^{2}}{3\Delta x}|\gamma_{2}|\phi\sin\alpha_{2}\right)^{2}+
       \left(\frac{\Delta t}{\Delta x}|\gamma_{1}|+\frac{2\Delta t^{2}}{3\Delta x}|\gamma_{2}|\cos\alpha_{2}\right)^{2}\phi^{2}$ must be less
   than one. This implies
   \begin{equation}\label{E13}
    \left|1-\frac{2\Delta t^{2}}{3\Delta x}|\gamma_{2}|\phi\sin\alpha_{2}\right|\leq1\text{\,\,\,\,and\,\,\,\,}
    \frac{\Delta t}{\Delta x}|\gamma_{1}|\left|1+\frac{2\Delta t}{3}\left|\frac{\gamma_{2}}{\gamma_{1}}\right|\cos\alpha_{2}\right||\phi|\leq1.
   \end{equation}
    Using simple calculations, it is not hard to see that estimates given by $(\ref{E13})$ are equivalent to
    \begin{equation}\label{E14}
    0\leq\frac{\Delta t^{2}}{\Delta x}|\gamma_{2}|\phi\sin\alpha_{2}\leq3\text{\,\,\,\,and\,\,\,\,}\frac{\Delta t}{\Delta x}
    \left|1+\frac{2\Delta t}{3}\left|\frac{\gamma_{2}}{\gamma_{1}}\right|\cos\alpha_{2}\right|\leq|\gamma_{1}|^{-1}|\phi|^{-1}.
   \end{equation}
    Since $\frac{\Delta t^{2}}{\Delta x}|\gamma_{2}|\phi\sin\alpha_{2}\leq\frac{\Delta t^{2}}{\Delta x}|\gamma_{2}||\phi|$ and
    $\left|1+\frac{2\Delta t}{3}\left|\frac{\gamma_{2}}{\gamma_{1}}\right|\cos\alpha_{2}\right|\leq1+\frac{2\Delta t}{3}
    \left|\frac{\gamma_{2}}{\gamma_{1}}\right|,$ it comes from this fact and inequalities $(\ref{E14})$ that the numerical scheme $(\ref{66})$
    is stable if
   \begin{equation*}
   \frac{\Delta t^{2}}{\Delta x}|\gamma_{2}||\phi|\leq3\text{\,\,\,\,and\,\,\,\,}\frac{\Delta t}{\Delta x}
    \left(1+\frac{2\Delta t}{3}\left|\frac{\gamma_{2}}{\gamma_{1}}\right|\right)\leq|\gamma_{1}|^{-1}|\phi|^{-1},
   \end{equation*}
   which is equivalent to
   \begin{equation}\label{E15}
    \frac{\Delta t^{3}}{\Delta x^{2}}\left(1+\frac{2\Delta t}{3}\left|\frac{\gamma_{2}}{\gamma_{1}}\right|\right)\leq3|
    \gamma_{2}\gamma_{1}|^{-1}|\phi|^{-2}.
   \end{equation}
    Since $\mu^{n}\mu_{j}^{n}=|\mu^{n}|=\left|\frac{Q_{j}^{n}}{A_{j}^{n}}\right|=e^{(b_{1}-a_{1})t^{n}},$ it comes from relation
    $(\ref{E9a})$ that
   \begin{equation}\label{E16}
    |\gamma_{2}|=\Gamma_{0}(\mu^{n})^{2}e^{-\frac{4}{3}a_{1}t^{n}}\text{\,\,\,and\,\,\,} |\gamma_{1}|=\mu^{n}.\text{\,\,\,\,So\,\,\,\,}
    |\gamma_{2}\gamma_{1}|=\Gamma_{0}(\mu^{n})^{3}e^{-\frac{4}{3}a_{1}t^{n}}\text{\,\,\,and\,\,\,}
    \left|\frac{\gamma_{2}}{\gamma_{1}}\right|=\Gamma_{0}\mu^{n}e^{-\frac{4}{3}a_{1}t^{n}}.
   \end{equation}
   The proof of Proposition $\ref{p2}$ is completed thank to relations $(\ref{E16})$ and equality $|A^{n}|=e^{a_{1}t^{n}}$
   \end{proof}

   \begin{remark}\label{r2}
    The Von Neumann stability approach, based on a Fourier analysis in the space domain has been developed for non-linear one-dimensional
    complete shallow water equations with source terms. Although the stability condition has not be derived analytically, we have analyzed
    the properties of amplification factor numerically (by use of Taylor series expansion), which contain information on the dispersion
    and diffusion errors of the considered numerical scheme. It is worth noticing that we used a local, linearized stability analysis to
    obtain estimate $(\ref{11e}),$ which must be considered as a necessary condition of stability for the numerical scheme $(\ref{66}).$
    Furthermore, since the aim of the work is to analyze the stability condition of the MacCormack scheme $(\ref{66})$-$(\ref{67}),$ from
    now on, we should focus the study on the case $0<|\phi|<<\pi.$
   \end{remark}

   Now, we are going to give a necessary condition of stability for the numerical scheme $(\ref{67}).$

   \begin{proposition}\label{p3}
   The numerical scheme $(\ref{67})$ is stable if the following estimate is satisfied
      \begin{equation*}
    \Delta t\left(3P_{n}W_{1}(\Delta t,\Delta x)+\frac{1}{\Delta x}\max\left\{W_{2}(\Delta t,\Delta x);
    W_{3}(\Delta t,\Delta x)\right\}\right)\leq\max\left\{1+\sqrt{1-r^{*}};\sqrt{r^{*}}\right\},
    \end{equation*}
   where $r^{*}\in(0;1)$ and
    \begin{equation*}
    W_{1}(\Delta t,\Delta x)=\frac{1}{2}+\left[1+4\left(\Delta t+\Delta t^{2}+\frac{\Delta t}{\Delta x}+\frac{\Delta t^{2}}{\Delta x}
    +\frac{\Delta t^{2}}{\Delta x}+\left(+\frac{\Delta t^{2}}{\Delta x}\right)^{2}\right)
    \max\{P_{n},2P_{n}^{2},\frac{1}{2}R_{n}|\phi|,\mu^{n}|\phi|,\right.
   \end{equation*}
   \begin{equation*}
    \left.P_{n}\mu^{n}|\phi|,6P_{n}^{2}\mu^{n}|\phi|,2R_{n}P_{n}|\phi|\}\right];
    \end{equation*}
   \begin{equation*}
    W_{2}(\Delta t,\Delta x)=(P_{n}+\frac{1}{2}R_{n})|\phi|+\mu^{n}\left\{1+4\left[\Delta t+\Delta t^{2}+\frac{\Delta t}{\Delta x}
    +\frac{\Delta t^{2}}{\Delta x}+\frac{\Delta t^{3}}{\Delta x}+\left(\frac{\Delta t}{\Delta x}\right)^{2}\right]\right.
   \end{equation*}
   \begin{equation*}
    \left[1+(4+R_{n}(\mu^{n})^{-1}+R_{n}^{-1}+N_{n}R_{n}(\mu^{n})^{-1})\left(1+\frac{\Delta t}{\Delta x}\mu^{n}\right)|\phi|\right]
   \end{equation*}
   \begin{equation*}
   \left.\max\left\{P_{n},N_{n},R_{n}P_{n},N_{n}P_{n},R_{n}\mu^{n},P_{n}^{2},N_{n}^{2},P_{n}^{2}\mu^{n},R_{n}^{2}\mu^{n},
   R_{n}^{2}(\mu^{n})^{2},R_{n}P_{n}\mu^{n},R_{n}N_{n}\mu^{n}\right\}\right\};
   \end{equation*}
   \begin{equation*}
    W_{3}(\Delta t,\Delta x)=(P_{n}+\frac{1}{2}R_{n})|\phi|+\frac{3}{2}\mu^{n}\left\{|\phi|+2\left[\Delta t+\Delta t^{2}+\frac{\Delta t}
    {\Delta x}+\frac{\Delta t^{2}}{\Delta x}+\frac{\Delta t^{3}}{\Delta x}+\left(\frac{\Delta t}{\Delta x}\right)^{2}\right]\right.
   \end{equation*}
   \begin{equation*}
    \left[1+N_{n}R_{n}(\mu^{n})^{-1}+\left(4+2P_{n}+R_{n}^{-1}+R_{n}^{-1}\mu^{n}+4N_{n}R_{n}^{-1}(\mu^{n})^{-1}+\frac{\Delta t}
    {\Delta x}\mu^{n}\right)|\phi|\right]
   \end{equation*}
   \begin{equation*}
   \left.\max\left\{P_{n},N_{n},N_{n}P_{n},R_{n}\mu^{n},P_{n}^{2},N_{n}^{2},R_{n}^{2}\mu^{n},P_{n}R_{n}\mu^{n},R_{n}^{2}(\mu^{n})^{2},
   R_{n}N_{n}\mu^{n}\right\}\right\};
   \end{equation*}
    with
      \begin{equation*}
     \mu^{n}=\frac{|Q^{n}_{j}|}{|A_{j}^{n}|}=e^{(b_{1}-a_{1})t^{n}};\text{\,\,\,}R_{n}=\frac{g}{T}|A_{j}^{n}||\mu^{n}|^{-1}+\mu^{n};
    \text{\,\,}P_{n}=\frac{P\overline{\tau}}{\rho}|Q_{j}^{n}|^{-1}+\Gamma_{0}\mu^{n}|A_{j}^{n}|^{-\frac{4}{3}};\text{\,\,\,}
    N_{n}=\frac{g}{2T}|Q_{j}^{n}||\mu^{n}|^{-1}.
   \end{equation*}
   \end{proposition}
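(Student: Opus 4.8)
The plan is to reproduce, for the momentum update, the machinery that established Proposition \ref{p2} for the continuity update, the essential new feature being that all three nonlinear contributions to the flux must now be carried through. First I would eliminate the predicted values from the corrector $(\ref{67})$: substituting $A_{j}^{\overline{n+1}}$, $A_{j-1}^{\overline{n+1}}$, $Q_{j}^{\overline{n+1}}$ and $Q_{j-1}^{\overline{n+1}}$ from the predictor $(\ref{64})$--$(\ref{65})$ into $(\ref{67})$ yields a single explicit one-step formula for $Q_{j}^{n+1}$ in terms of $n$-level data alone. As in the passage from $(\ref{3e})$ to $(\ref{4e})$, the lateral-inflow contributions are dropped using $r\ll A,Q$, which does not affect the stability verdict. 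The resulting scheme contains a centred second difference of the pressure term $\frac{g}{2T}A^{2}$, a centred second difference of the convective term $Q^{2}/A$, and first and second differences of the friction term $Q|Q|/A^{7/3}$, each evaluated at nested arguments of the form $A_{j}^{\overline{n+1}}=A_{j}^{n}-\frac{\Delta t}{\Delta x}(Q_{j+1}^{n}-Q_{j}^{n})+\cdots$.

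The second step is to linearize these nested nonlinearities and insert the Von Neumann ansatz $A_{j}^{n}=e^{at^{n}}e^{ikx_{j}}$, $Q_{j}^{n}=e^{bt^{n}}e^{ikx_{j}}$ with $a=a_{1}+ia_{2}$, $b=b_{1}+ib_{2}$ and $a_{2}=b_{2}$, exactly as in Proposition \ref{p2}. Dividing through by $e^{bt^{n}}e^{ikx_{j}}$ isolates the amplification factor $e^{b\Delta t}$. To keep the expression manageable I would introduce the abbreviations $\mu^{n}$, $R_{n}$, $P_{n}$ and $N_{n}$ of the statement, where $P_{n}$ packages the bed-slope and friction source, $R_{n}$ the pressure term $\frac{g}{2T}A^{2}$, and $N_{n}$ the convective term; the factors $(\mu^{n})^{-1}$, $R_{n}^{-1}$ and the power $A^{-4/3}$ appearing inside $W_{1},W_{2},W_{3}$ arise precisely when differentiating $Q|Q|/A^{7/3}$ and $Q^{2}/A$.

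Third, I would invoke Remarks \ref{r1} and \ref{r2} to restrict to $0<|\phi|\ll\pi$, Taylor-expand the trigonometric factors about $\phi=0$, retain leading orders, and separate the amplification factor into its real and imaginary parts as was done at $(\ref{E12})$. Imposing $|e^{b\Delta t}|^{2}=(\mathrm{Re})^{2}+(\mathrm{Im})^{2}\le1$ then splits into a real-part constraint and an imaginary-part constraint. The real part produces a quadratic in the relevant step-size combination whose admissible band is recorded through the parameter $r^{*}\in(0,1)$ and the two roots $1\pm\sqrt{1-r^{*}}$, which explains the right-hand side $\max\{1+\sqrt{1-r^{*}};\sqrt{r^{*}}\}$; the imaginary part, depending on the sign of the cosine weight of the convective and friction terms, bifurcates into the two competing bounds $W_{2}$ and $W_{3}$, whence the $\max\{W_{2};W_{3}\}$. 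Bounding each $\sin$ and $\cos$ factor by $1$ and grouping the surviving products into the $\max$-sets that define $W_{1},W_{2},W_{3}$ gives the stated sufficient condition.

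The main obstacle is the bookkeeping of the linearization: the friction term $Q|Q|/A^{7/3}$ and the convective term $Q^{2}/A$, evaluated at the predicted arguments, generate a large number of cross products once expanded, and the delicate point is to Taylor-truncate consistently so that no genuinely dominant contribution is discarded when the trigonometric factors are crudely bounded by $1$. Correctly sorting these products into the $\max\{\cdots\}$ collections appearing in $W_{1}$, $W_{2}$ and $W_{3}$, and checking that the coefficients $P_{n},R_{n},N_{n},\mu^{n}$ are assembled with the right powers, is where almost all the work lies; once the real-part and imaginary-part constraints are in hand, combining them into the single inequality of the statement is routine.
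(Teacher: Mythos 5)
Your plan follows the paper's own strategy in outline: substitute the predictor values from $(\ref{64})$--$(\ref{65})$ into the corrector $(\ref{67})$, apply the same Von Neumann ansatz with $a_{2}=b_{2}$, truncate at first order in $\phi$ by Remark $\ref{r1}$, and split $|e^{b\Delta t}|^{2}\leq1$ into a real-part and an imaginary-part constraint through an auxiliary $r^{*}\in(0,1)$ --- this is precisely what the paper carries out via Lemmas $\ref{l1}$--$\ref{l4}$ and estimates $(\ref{E55a})$--$(\ref{E56a})$. However, two points in your roadmap misdescribe how the final inequality is assembled, and one of them would send you down the wrong path if followed literally. First, $W_{2}$ and $W_{3}$ do \emph{not} both arise from the imaginary-part constraint by a sign bifurcation of a cosine weight: in the paper, $W_{2}$ is the coefficient bound extracted from the \emph{real}-part inequality $(\ref{E55a})$ (whose admissible band supplies the $1+\sqrt{1-r^{*}}$), while $W_{3}$ is extracted from the \emph{imaginary}-part inequality $(\ref{E56a})$, bounded by $\sqrt{r^{*}}$; concretely, $W_{2}$ packages the bound $(\ref{E65})$ on $|H_{1}|+K_{1}^{2}+K_{2}^{2}+2|K_{1}K_{2}|\,|1-\frac{\Delta t}{\Delta x}\mu^{n}||\phi|$ and $W_{3}$ the bound $(\ref{E66})$ on $|H_{2}|+2|K_{1}K_{2}|+(K_{1}^{2}+K_{2}^{2})|1-\frac{\Delta t}{\Delta x}\mu^{n}||\phi|$, and only afterwards are the two inequalities merged --- lossily --- into the single max-form $(\ref{E75})$ via $\max\{a+dx,\,a+dy\}=a+d\max\{x,y\}$. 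So the $\sqrt{r^{*}}$ in the right-hand max comes from the imaginary part, not from the roots of a real-part quadratic. Second, your dictionary for the abbreviations is off: $N_{n}=\frac{g}{2T}|Q_{j}^{n}||\mu^{n}|^{-1}$ bounds the coefficients $C_{21},\overline{C}_{21}$ generated by the pressure difference $\frac{g}{4T}\left[(A_{j}^{\overline{n+1}})^{2}-(A_{j-1}^{\overline{n+1}})^{2}\right]$ in Lemma $\ref{l1}$, not by the convective flux; the convective contribution enters through $\mu^{n}$ and through the $\mu^{n}$ term inside $\overline{C}_{11}$, hence inside $R_{n}$.

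Beyond these misassignments, your sketch leaves unexecuted exactly the part that constitutes the proof: the expansions producing the coefficient families $C_{rs},\overline{C}_{rs}$ and the quantities $H_{1},H_{2},K_{1}^{2},K_{2}^{2},K_{1}K_{2}$ of Lemma $\ref{l2}$, and above all the treatment of the $\frac{7}{3}$-power in the friction term, which the paper handles in Lemma $\ref{l3}$ by rewriting $Q_{j}^{\overline{n+1}}|Q_{j}^{\overline{n+1}}|(A_{j}^{\overline{n+1}})^{-\frac{7}{3}}$ through moduli and conjugates. That device introduces the phases $\theta_{1},\theta_{2}$ with the factor $\cos\left(\theta_{1}+\frac{7}{3}\theta_{2}\right)$, and it is the specific estimates $\left[2\left(1+\frac{\Delta t}{\Delta x}\mu^{n}|\phi|\right)^{2}\right]^{\frac{7}{6}}\leq3\left(1+3\frac{\Delta t}{\Delta x}\mu^{n}|\phi|\right)$ in $(\ref{E69})$ together with $\Gamma_{0}|Q^{n}|^{-\frac{4}{3}}(\mu^{n})^{\frac{7}{3}}=\Gamma_{0}\mu^{n}|A^{n}|^{-\frac{4}{3}}\leq P_{n}$ in $(\ref{E67})$ that generate the factor $3P_{n}$ multiplying $W_{1}$ in the statement. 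You acknowledge the bookkeeping burden, but ``bounding each $\sin$ and $\cos$ by $1$'' does not by itself yield these constants; without the corrected real/imaginary pairing above and the $\frac{7}{6}$-power estimate, the stated $W_{1},W_{2},W_{3}$ and the coefficient $3$ cannot be recovered, so as written the proposal is a correct roadmap of the paper's method with a genuine gap at the two places named.
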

   The proof of Proposition $\ref{p3}$ requires some intermediate results which play a crucial role in the study of the amplification
   factor associated with $(\ref{67}).$

   \begin{lemma}\label{l1}
   Let $n$ and $j$ be nonnegative integers. Then the terms $\frac{1}{2}\left(Q^{n}_{j}+Q_{j}^{\overline{n+1}}\right)$ and\\
   $\frac{g}{4T}\left[(A_{j}^{\overline{n+1}})^{2}-(A_{j-1}^{\overline{n+1}})^{2}\right]$ can be
   approximated as
   \begin{equation}\label{E21a}
    \frac{1}{2}\left(Q^{n}_{j}+Q_{j}^{\overline{n+1}}\right)=Q^{n}_{j}\left\{1+\frac{\Delta t}{2\Delta x}C_{11}\phi
    +\frac{1}{2}\Delta tC_{12}+i\left(\frac{\Delta t}{2\Delta x}\overline{C}_{11}\phi+\frac{1}{2}\Delta t\overline{C}_{12}\right)\right\}+O(\phi^{2}),
   \end{equation}
   and
   \begin{equation}\label{E24}
    \frac{g}{4T}\left[(A_{j}^{\overline{n+1}})^{2}-(A_{j-1}^{\overline{n+1}})^{2}\right]=Q_{j}^{n}(C_{21}+i\overline{C}_{21})\phi+O(\phi^{2}),
   \end{equation}
   where
   \begin{equation*}
    C_{11}=\frac{g}{T}|A^{n}||\mu^{n}|^{-1}\sin\alpha_{3};\text{\,\,}\overline{C}_{11}=-\frac{g}{T}|A^{n}||\mu^{n}|^{-1}\cos\alpha_{3}-\mu^{n};
    \text{\,\,}C_{12}=\frac{P\tau}{\rho}|Q^{n}|^{-1}\cos\alpha_{3}-\Gamma_{0}\mu^{n}|A^{n}|^{-\frac{4}{3}}\cos\alpha_{2};
   \end{equation*}
   \begin{equation}\label{E22}
    \overline{C}_{12}=\frac{P\tau}{\rho}|Q^{n}|^{-1}\sin\alpha_{3}-\Gamma_{0}\mu^{n}|A^{n}|^{-\frac{4}{3}}\sin\alpha_{2};\text{\,\,\,}
    C_{21}=-\frac{g}{2T}|Q^{n}||\mu^{n}|^{-1}\sin\alpha_{3};\text{\,\,\,}\overline{C}_{21}=\frac{g}{2T}|Q^{n}||\mu^{n}|^{-1}\cos\alpha_{3};
   \end{equation}
   where
   \begin{equation}\label{E23}
    \alpha_{3}=a_{2}t^{n}+kx_{j},\text{\,\,\,and\,\,\,}\alpha_{2}=\frac{7}{3}\alpha_{3}.
   \end{equation}
   \end{lemma}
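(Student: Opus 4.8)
The plan is to feed the Von Neumann ansatz $A_{j}^{n}=e^{at^{n}}e^{ikx_{j}}$, $Q_{j}^{n}=e^{bt^{n}}e^{ikx_{j}}$ (with $b_{2}=a_{2}$) into the predictor formulas $\pref{64}$ and $\pref{65}$, to drop the lateral inflow $r$ as licensed by the standing assumption $r\ll A,Q$, and then to Taylor-expand in the phase $\phi=k\Delta x$, keeping only the constant and linear terms (Remark $\ref{r1}$ makes the $O(\phi^{2})$ remainder harmless). I would repeatedly use the shift rules $A_{j\pm1}^{n}=A_{j}^{n}e^{\pm i\phi}$, $Q_{j\pm1}^{n}=Q_{j}^{n}e^{\pm i\phi}$ together with the amplitude/phase dictionary $|A^{n}|=e^{a_{1}t^{n}}$, $|Q^{n}|=e^{b_{1}t^{n}}$, $\mu^{n}=e^{(b_{1}-a_{1})t^{n}}$, $\alpha_{3}=a_{2}t^{n}+kx_{j}$, $\alpha_{2}=\frac{7}{3}\alpha_{3}$ from $\pref{E23}$, and the two consolidating identities $e^{(2a-b)t^{n}}e^{ikx_{j}}=|A^{n}||\mu^{n}|^{-1}e^{i\alpha_{3}}$ and $e^{(b-a)t^{n}}=\mu^{n}$.

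For $\pref{E21a}$ I would substitute $\pref{65}$ into $\tfrac{1}{2}(Q_{j}^{n}+Q_{j}^{\overline{n+1}})$ and factor out $Q_{j}^{n}$. The two flux differences carry the $\phi$-linear content: since $(A_{j+1}^{n})^{2}-(A_{j}^{n})^{2}=(A_{j}^{n})^{2}(e^{2i\phi}-1)=2i(A_{j}^{n})^{2}\phi+O(\phi^{2})$ and $(Q_{j+1}^{n})^{2}/A_{j+1}^{n}-(Q_{j}^{n})^{2}/A_{j}^{n}=((Q_{j}^{n})^{2}/A_{j}^{n})(e^{i\phi}-1)$, dividing by $Q_{j}^{n}$ converts the amplitudes $(A_{j}^{n})^{2}/Q_{j}^{n}$ and $Q_{j}^{n}/A_{j}^{n}$ into $|A^{n}||\mu^{n}|^{-1}e^{i\alpha_{3}}$ and $\mu^{n}$; writing $ie^{i\alpha_{3}}=-\sin\alpha_{3}+i\cos\alpha_{3}$ and splitting real from imaginary then produces exactly $C_{11}$ and $\overline{C}_{11}$, the lone $-\mu^{n}$ in $\overline{C}_{11}$ coming from the convective difference. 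The source term is $\phi$-independent at this order and supplies the $\tfrac{1}{2}\Delta t(C_{12}+i\overline{C}_{12})$ block: dividing by $Q_{j}^{n}=|Q^{n}|e^{i\alpha_{3}}$ turns the driving part into $\tfrac{P\overline{\tau}}{\rho}|Q^{n}|^{-1}e^{-i\alpha_{3}}$, while in the friction part the factored $Q_{j}^{n}$ cancels the numerator and leaves $|Q_{j}^{n}|/(A_{j}^{n})^{7/3}=\mu^{n}|A^{n}|^{-4/3}e^{-i\alpha_{2}}$, the prefactor $gP\cdot\frac{n_{1}^{2}}{1.49^{2}}P^{1/3}$ condensing to $\Gamma_{0}$ by $\pref{3a}$. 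Separating real and imaginary parts then reads off $C_{12}$ and $\overline{C}_{12}$ in $\pref{E22}$.

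For $\pref{E24}$ only the $A$-predictor $\pref{64}$ enters (again with $r$ dropped). After factoring $e^{ikx_{j}}$ out of $A_{j}^{\overline{n+1}}$ and $A_{j-1}^{\overline{n+1}}$, I would call the bracketed amplitudes $P$ and $R$ and write the bracket in $\pref{E24}$ as $P^{2}-R^{2}=(P-R)(P+R)$. The point is that $P-R=e^{at^{n}}(1-e^{-i\phi})+O(\phi^{2})=ie^{at^{n}}\phi+O(\phi^{2})$ while $P+R=2e^{at^{n}}+O(\phi)$, so only the leading factor of $P+R$ survives and $(A_{j}^{\overline{n+1}})^{2}-(A_{j-1}^{\overline{n+1}})^{2}=2ie^{2at^{n}}e^{2ikx_{j}}\phi+O(\phi^{2})$. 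Multiplying by $\frac{g}{4T}$, dividing by $Q_{j}^{n}$, and invoking $e^{(2a-b)t^{n}}e^{ikx_{j}}=|A^{n}||\mu^{n}|^{-1}e^{i\alpha_{3}}$ with $ie^{i\alpha_{3}}=-\sin\alpha_{3}+i\cos\alpha_{3}$ delivers the coefficients $C_{21}$ and $\overline{C}_{21}$.

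The algebra is routine; the genuine obstacle is the two non-polynomial pieces of the source. The modulus $|Q_{j}^{n}|=e^{b_{1}t^{n}}$ carries no spatial phase (unlike $Q_{j}^{n}$), and the fractional power $(A_{j}^{n})^{7/3}$ manufactures the phase $e^{-i\alpha_{2}}$ precisely because $\alpha_{2}=\frac{7}{3}\alpha_{3}$; collapsing $e^{b_{1}t^{n}}e^{-\frac{7}{3}a_{1}t^{n}}$ to $\mu^{n}|A^{n}|^{-4/3}$ and keeping the $\cos/\sin$ signs straight is exactly where $C_{12},\overline{C}_{12}$ are pinned down, and it is the one place where a sign slip would propagate into the final estimate. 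Once every term is rewritten through $\mu^{n},|A^{n}|,\alpha_{2},\alpha_{3}$ before the real/imaginary split, $\pref{E21a}$ and $\pref{E24}$ fall out by inspection.
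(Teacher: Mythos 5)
Your plan is the paper's own proof in all essentials: the same Von Neumann substitution with $b_{2}=a_{2}$, the same truncations $e^{i\phi}-1=i\phi+O(\phi^{2})$ and $e^{2i\phi}-1=2i\phi+O(\phi^{2})$ licensed by Remark \ref{r1}, the same dropping of $r$, the same difference-of-squares factorization for \pref{E24} with the second difference of $Q$ absorbed into $O(\phi^{2})$, and the same dictionary $e^{(2a-b)t^{n}}e^{ikx_{j}}=|A^{n}||\mu^{n}|^{-1}e^{i\alpha_{3}}$, $e^{(b-a)t^{n}}=\mu^{n}$, $|Q_{j}^{n}|(A_{j}^{n})^{-7/3}=\mu^{n}|A^{n}|^{-4/3}e^{-i\alpha_{2}}$, so on method there is nothing to separate you from the paper.

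One caveat, and it sits exactly at the two spots you flag as sign-sensitive: executed literally, your identities do not reproduce the constants \pref{E22} verbatim — they correct them. For \pref{E24}, the first factor contributes $ie^{(a_{1}-b_{1})t^{n}}\phi$ and the second $2e^{(a_{1}-b_{1})t^{n}}$, so the product carries $e^{2(a_{1}-b_{1})t^{n}}=(\mu^{n})^{-2}$ and the modulus in $C_{21},\overline{C}_{21}$ comes out as $\frac{g}{2T}|A^{n}||\mu^{n}|^{-1}$, which is what your identity $e^{(2a-b)t^{n}}e^{ikx_{j}}=|A^{n}||\mu^{n}|^{-1}e^{i\alpha_{3}}$ gives; the paper's chain writes $e^{(a_{1}-b_{1})t^{n}}$ to the first power at this step and hence states $|Q^{n}||\mu^{n}|^{-1}$ in \pref{E22} — a factor $\mu^{n}$ apart, since $|Q^{n}|=\mu^{n}|A^{n}|$. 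Likewise, splitting $\frac{P\overline{\tau}}{\rho}|Q^{n}|^{-1}e^{-i\alpha_{3}}-\Gamma_{0}\mu^{n}|A^{n}|^{-4/3}e^{-i\alpha_{2}}$ into real and imaginary parts yields real part $C_{12}$ but imaginary part $-\overline{C}_{12}$ with the stated definitions, so the slot $+i\,\frac{1}{2}\Delta t\,\overline{C}_{12}$ in \pref{E21a} should carry the opposite sign (equivalently, the signs in the definition of $\overline{C}_{12}$ should be flipped); the paper asserts the identification without performing the split, so it never confronts this. In short: your proof is sound and is the paper's proof, but if you carry it out you will land on the lemma with slightly corrected constants, and the discrepancies originate in the paper's own algebra, not in your plan — make the replacement $|Q^{n}|\mapsto|A^{n}|$ in $C_{21},\overline{C}_{21}$ and fix the sign of the $\overline{C}_{12}$ term explicit in your write-up rather than claiming to "read off" \pref{E22} as printed.
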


   \begin{proof}
   First, we recall that $A_{j}^{n}=e^{at^{n}}e^{ikx_{j}}=e^{(a_{1}+ia_{2})t^{n}}e^{ikx_{j}}$ and $Q_{j}^{n}=e^{bt^{n}}e^{ikx_{j}}
   =e^{(b_{1}+ia_{2})t^{n}}e^{ikx_{j}}.$ Expanding the Taylor series around $\phi$ and neglecting the terms of high-order to
     obtain $e^{i\phi}-1=i\phi+O(\phi^{2})$ and $e^{2i\phi}-1=2i\phi+O(\phi^{2})$ (this is true according to Remark $\ref{r1}$).
   Utilizing this, we get
   \begin{equation*}
    \frac{1}{2}\left(Q^{n}_{j}+Q_{j}^{\overline{n+1}}\right)= Q^{n}_{j}-\frac{\Delta t}{2\Delta x}
     \left\{\frac{g}{2T}\left[(A_{j+1}^{n})^{2}-(A_{j}^{n})^{2}\right]+\frac{(Q_{j+1}^{n})^{2}}{A_{j+1}^{n}}-
     \frac{(Q_{j}^{n})^{2}}{A_{j}^{n}}\right\}+\frac{1}{2}\Delta t\left(\frac{P\overline{\tau}}{\rho}-\Gamma_{0}
      \frac{Q_{j}^{n}|Q^{n}_{j}|}{(A_{j}^{n})^{\frac{7}{3}}}\right)
   \end{equation*}
    \begin{equation*}
     =Q^{n}_{j}-\frac{\Delta t}{2\Delta x}\left\{\frac{g}{2T}\left[e^{2at^{n}}e^{2ikx_{j+1}}-e^{2at^{n}}e^{2ikx_{j}}\right]
     +\frac{e^{2bt^{n}}e^{2ikx_{j+1}}}{e^{at^{n}}e^{ikx_{j+1}}}-\frac{e^{2bt^{n}}e^{2ikx_{j}}}
     {e^{at^{n}}e^{ikx_{j}}}\right\}+\frac{1}{2}\Delta t\left(\frac{P\overline{\tau}}{\rho}-\right.
    \end{equation*}
     \begin{equation*}
        \left.\Gamma_{0}\frac{Q_{j}^{n}|e^{bt^{n}}e^{ikx_{j}}|}{e^{\frac{7}{3}at^{n}}e^{\frac{7}{3}ikx_{j}}}\right)=
          Q^{n}_{j}-\frac{\Delta t}{2\Delta x}\left\{\frac{g}{2T}e^{2at^{n}}e^{2ikx_{j}}(e^{2i\phi}-1)+
          e^{(2b-a)t^{n}}e^{ikx_{j}}(e^{i\phi}-1)\right\}+\frac{1}{2}Q_{j}^{n}\Delta t\left(\frac{P\overline{\tau}}{\rho }
          (Q_{j}^{n})^{-1}\right.
     \end{equation*}
     \begin{equation*}
       \left.-\Gamma_{0}\frac{|e^{bt^{n}}|}{e^{\frac{7}{3}at^{n}}e^{\frac{7}{3}ikx_{j}}}\right)=Q^{n}_{j}\left\{1-i\frac{\Delta t}
       {2\Delta x}\phi\left\{\frac{g}{T}e^{(2a-b)t^{n}}e^{ikx_{j}}+e^{(b-a)t^{n}}\right\}+\frac{1}{2}\Delta t\left(\frac{P\overline{\tau}}
       {\rho}(Q_{j}^{n})^{-1}-\right.\right.
     \end{equation*}
        \begin{equation*}
    \left.\left.\Gamma_{0}\frac{|Q^{n}_{j}||A_{j}^{n}|^{-\frac{7}{3}}}{e^{\frac{7}{3}i(a_{2}t^{n}+kx_{j})}}\right)\right\}
     +O(\phi^{2})= Q^{n}_{j}\left\{1-i\frac{\Delta t}{2\Delta x}\phi\left\{\frac{g}{T}e^{(2a_{1}-b_{1})t^{n}}e^{i(a_{2}t^{n}+kx_{j})}
     +e^{(b_{1}-a_{1})t^{n}}\right\}+\frac{1}{2}\Delta t\left(\frac{P\overline{\tau}}{\rho}(Q_{j}^{n})^{-1}\right.\right.
        \end{equation*}
        \begin{equation*}
        \left.\left.-\Gamma_{0}\frac{|Q^{n}_{j}||A_{j}^{n}|^{-\frac{7}{3}}}{e^{\frac{7}{3}i(a_{2}t^{n}
    +kx_{j})}}\right)\right\}+O(\phi^{2})=Q^{n}_{j}\left\{1-i\frac{\Delta t}{2\Delta x}\phi
    \left\{\frac{g}{T}|A^{n}|(\mu^{n})^{-1}e^{i\alpha_{3}}+\mu^{n}\right\}+\frac{1}{2}\Delta t
    \left(\frac{P\overline{\tau}}{\rho}(Q_{j}^{n})^{-1}-\right.\right.
        \end{equation*}
    \begin{equation*}
    \left.\left.\Gamma_{0}\frac{|\mu^{n}||A_{j}^{n}|^{-\frac{4}{3}}}{e^{\frac{7}{3}i(a_{2}t^{n}+kx_{j})}}\right)\right\}+O(\phi^{2})=
    Q^{n}_{j}\left\{1+\frac{\Delta t}{2\Delta x}C_{11}\phi+\frac{1}{2}\Delta tC_{12}+i\left(\frac{\Delta t}{2\Delta x}\overline{C}_{11}\phi
    +\frac{1}{2}\Delta t\overline{C}_{12}\right)\right\}+O(\phi^{2}),
    \end{equation*}
   where $C_{11},$ $\overline{C}_{11},$ $C_{12}$ and $\overline{C}_{12}$ are defined by relation $(\ref{E22}).$\\

   On the other hand, using equation $(\ref{64})$ and applying the Taylor expansion around $\phi$ and neglecting the high-order terms
   together with the term in $r,$ we get
   \begin{equation*}
    \frac{g}{4T}\left[(A_{j}^{\overline{n+1}})^{2}-(A_{j-1}^{\overline{n+1}})^{2}\right]=\frac{g}{4T}\left(A_{j}^{\overline{n+1}}-
    A_{j-1}^{\overline{n+1}}\right)\left(A_{j}^{\overline{n+1}}+A_{j-1}^{\overline{n+1}}\right)=\frac{g}{4T}\left(A_{j}^{n}
    -A_{j-1}^{n}\right.
   \end{equation*}
   \begin{equation*}
    \left.-\frac{\Delta t}{\Delta x}(Q_{j+1}^{n}-2Q_{j}^{n}+Q_{j-1}^{n})+\Delta t(r_{j}^{n}-r_{j-1}^{n})\right)\left(A_{j}^{n}
    +A_{j-1}^{n}-\frac{\Delta t}{\Delta x}(Q_{j+1}^{n}-Q_{j-1}^{n})+\Delta t(r_{j}^{n}+r_{j-1}^{n})\right)
   \end{equation*}
   \begin{equation*}
    \approx \frac{g}{4T}\left(e^{at^{n}}e^{ikx_{j}}-e^{at^{n}}e^{ikx_{j-1}}-\frac{\Delta t}{\Delta x}
    (e^{bt^{n}}e^{ikx_{j+1}}-2e^{bt^{n}}e^{ikx_{j}}+e^{bt^{n}}e^{ikx_{j-1}})\right)\left(e^{at^{n}}e^{ikx_{j}}+e^{at^{n}}e^{ikx_{j-1}}\right.
   \end{equation*}
   \begin{equation*}
    \left.-\frac{\Delta t}{\Delta x}(e^{bt^{n}}e^{ikx_{j+1}}-e^{bt^{n}}e^{ikx_{j-1}})\right)=\frac{g}{4T}e^{2bt^{n}}e^{2ikx_{j}}
    \left\{e^{(a-b)t^{n}}(1-e^{-i\phi})-\frac{\Delta t}{\Delta x}(e^{i\phi}-2+e^{-i\phi})\right\}\times
   \end{equation*}
   \begin{equation*}
    \left\{e^{(a_{1}-b_{1})t^{n}}(1+e^{-i\phi})-\frac{\Delta t}{\Delta x}(e^{i\phi}-e^{-i\phi})\right\}=i\frac{g}{2T}
    \phi Q_{j}^{n}|Q^{n}|e^{i\alpha_{3}}e^{(a_{1}-b_{1})t^{n}}+O(\phi^{2})=
   \end{equation*}
   \begin{equation*}
    =Q_{j}^{n}(C_{21}+i\overline{C}_{21})\phi+O(\phi^{2}),
   \end{equation*}
   where $C_{21}$ and $\overline{C}_{21}$ are given by relation $(\ref{E22}).$\\
   \end{proof}

   \begin{lemma}\label{l2}
   Let consider $n$ and $j$ be two nonnegative integers. Then the term $\frac{\left(Q_{j}^{\overline{n+1}}\right)^{2}}
   {A_{j}^{\overline{n+1}}}-\frac{\left(Q_{j-1}^{\overline{n+1}}\right)^{2}}{A_{j-1}^{\overline{n+1}}}$ can be approximated as
   \begin{equation*}
    \frac{(Q_{j}^{\overline{n+1}})^{2}}{A_{j}^{\overline{n+1}}}-\frac{(Q_{j-1}^{\overline{n+1}})^{2}}{A_{j-1}^{\overline{n+1}}}
    =Q^{n}_{j}\mu^{n}\left\{H_{1}-K_{1}^{2}+K^{2}_{2}+2K_{1}K_{2}(1-\frac{\Delta t}{\Delta x}\mu^{n})\phi+i\left(H_{2}-2K_{1}K_{2}
    +\right.\right.
   \end{equation*}
   \begin{equation}\label{Ee}
    \left.\left.(K_{2}^{2}-K_{1}^{2})(1-\frac{\Delta t}{\Delta x}\mu^{n})\phi\right)\right\}+O(\phi^{2}),
   \end{equation}
   where the functions $H_{1}$, $H_{2}$, $K_{1}^{2}$, $K_{2}^{2}$ and $K_{1}K_{2}$ are given by
    \begin{equation}\label{E30}
    H_{1}=1+2\Delta tC_{12}+\Delta t^{2}(C_{12}^{2}-\overline{C}_{12}^{2})+2\frac{\Delta t^{2}}{\Delta x}\left[C_{11}C_{12}-\overline{C}_{11}
    \overline{C}_{12}-\mu^{n}(\overline{C}_{12}+\Delta tC_{12}\overline{C}_{12})\right]\phi;
    \end{equation}
    \begin{equation}\label{E30a}
    H_{2}=\Delta t\overline{C}_{12}+\Delta t^{2}\overline{C}_{12}C_{12}+\frac{\Delta t}{\Delta x}\left[\overline{C}_{11}+
    \mu^{n}+\Delta t(C_{11}\overline{C}_{12}+\overline{C}_{11}C_{12}+\mu^{n}(2C_{12}+\Delta t(C_{12}^{2}-\overline{C}_{12}^{2})))\right]
    \phi;
    \end{equation}
   \begin{equation*}
    K_{1}^{2}=1+\left(\frac{\Delta t}{2\Delta x}\right)^{2}\left[\overline{C}_{21}^{2}+4C_{21}\overline{C}_{21}\phi\right]+
    \Delta t^{2}\left[\overline{C}_{12}^{2}-2\overline{C}_{12}\overline{C}_{22}\phi\right]-\frac{\Delta t}{\Delta x}(\overline{C}_{21}+
    2C_{21}\phi)+
    \end{equation*}
    \begin{equation}\label{E41c}
    \Delta t(C_{12}-2C_{22}\phi)-\frac{\Delta t^{2}}{\Delta x}\left[C_{12}\overline{C}_{21}-(\overline{C}_{21}C_{22}-2C_{12}C_{21})\phi\right];
    \end{equation}
    \begin{equation*}
    K_{2}^{2}=\left(\frac{\Delta t}{2\Delta x}\right)^{2}(\mu^{n})^{2}\left[C_{11}^{2}+2C_{11}(1-\overline{C}_{11}-\mu^{n})\phi\right]+
    \Delta t^{2}\left[C_{12}^{2}-2C_{12}\overline{C}_{22}\phi\right]+\frac{\Delta t}{\Delta x}\mu^{n}C_{11}\phi+
    \end{equation*}
    \begin{equation}\label{E41d}
    2\Delta t\overline{C}_{12}\phi+\frac{\Delta t^{2}}{\Delta x}\mu^{n}\left[C_{11}\overline{C}_{12}-C_{11}\overline{C}_{22}\phi+
    \overline{C}_{12}(1-\overline{C}_{11}-\mu^{n})\phi\right];
    \end{equation}
    and
     \begin{equation*}
    K_{1}K_{2}=-\left\{\phi-\left(\frac{\Delta t}{2\Delta x}\right)^{2}\mu^{n}\left[C_{11}\overline{C}_{21}
    +[\overline{C}_{21}(1-\overline{C}_{11}-\mu^{n})+2C_{11}C_{21}]\phi\right]+
    \Delta t\left[C_{12}-(C_{12}\overline{C}_{22}-C_{12})\phi\right]\right.
    \end{equation*}
    \begin{equation*}
    +\frac{\Delta t}{2\Delta x}\mu^{n}\left[C_{11}+(1-\overline{C}_{11}-\mu^{n}-\overline{C}_{21}(\mu^{n})^{-1})\phi\right]
    +\frac{\Delta t^{2}}{2\Delta x}\mu^{n}\left[C_{11}C_{12}-\overline{C}_{12}\overline{C}_{21}(\mu^{n})^{-1}+[C_{12}(1-\overline{C}_{11}-
    \mu^{n})-\right.
    \end{equation*}
    \begin{equation}\label{E41e}
    \left.\left.C_{11}C_{22}+\overline{C}_{21}C_{22}(\mu^{n})^{-1}-2C_{21}\overline{C}_{12}(\mu^{n})^{-1}]\phi\right]+\Delta t^{2}
    [C_{12}\overline{C}_{12}-(C_{12}\overline{C}_{22}+\overline{C}_{12}C_{22})\phi]\right\};
    \end{equation}
    $C_{rs},$ $\overline{C}_{rs},$ $r,s=1,2,$ are given by $(\ref{E22})$ and $\alpha_{2}$, $\alpha_{3}$ come from relation
   $(\ref{E23}).$ Furthermore
   \begin{equation}\label{E22a}
    C_{22}=\frac{4}{3}\Gamma_{0}\mu^{n}|A_{j}^{n}|^{-\frac{4}{3}}\sin\alpha_{2}\text{\,\,\,and\,\,\,}
    \overline{C}_{22}=\frac{4}{3}\Gamma_{0}\mu^{n}|A_{j}^{n}|^{-\frac{4}{3}}\cos\alpha_{2}.
   \end{equation}
   \end{lemma}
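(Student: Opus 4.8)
The plan is to read $(\ref{Ee})$ as a first-order Taylor expansion in $\phi$ of the nonlinear convective flux difference, in exactly the spirit of Lemma $\ref{l1}$. Throughout I would keep only terms through order $\phi$ and discard everything in $O(\phi^{2})$, using repeatedly $e^{\pm i\phi}-1=\pm i\phi+O(\phi^{2})$ and $\tfrac{1}{1+z}=1-z+O(z^{2})$. First I would record the first-order expansions of the four predictor quantities that enter the flux: $Q_{j}^{\overline{n+1}}$, $A_{j}^{\overline{n+1}}$ and their left-neighbour versions $Q_{j-1}^{\overline{n+1}}$, $A_{j-1}^{\overline{n+1}}$. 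For the discharge, doubling the bracket in $(\ref{E21a})$ already gives
\begin{equation*}
Q_{j}^{\overline{n+1}}=Q_{j}^{n}\Bigl\{1+\Delta t\,C_{12}+\tfrac{\Delta t}{\Delta x}C_{11}\phi+i\bigl(\Delta t\,\overline{C}_{12}+\tfrac{\Delta t}{\Delta x}\overline{C}_{11}\phi\bigr)\Bigr\}+O(\phi^{2}),
\end{equation*}
while $(\ref{64})$ together with the ansatz $A_{j}^{n}=e^{at^{n}}e^{ikx_{j}}$, $Q_{j}^{n}=e^{bt^{n}}e^{ikx_{j}}$ and $a_{2}=b_{2}$ yields the much simpler
\begin{equation*}
A_{j}^{\overline{n+1}}=A_{j}^{n}\bigl(1-i\tfrac{\Delta t}{\Delta x}\mu^{n}\phi\bigr)+O(\phi^{2}).
\end{equation*}
The $j-1$ versions follow from the substitution $x_{j}\mapsto x_{j-1}=x_{j}-\Delta x$, which multiplies every explicit phase by $e^{-i\phi}$ and shifts $\alpha_{3}\mapsto\alpha_{3}-\phi$ (hence $\alpha_{2}\mapsto\alpha_{2}-\tfrac{7}{3}\phi$) inside the coefficients $C_{rs}$, $\overline{C}_{rs}$, $C_{22}$, $\overline{C}_{22}$ of $(\ref{E22})$, $(\ref{E23})$, $(\ref{E22a})$; this shift is the source of the extra $(1-\tfrac{\Delta t}{\Delta x}\mu^{n})\phi$ factors appearing in $(\ref{Ee})$.

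Next I would assemble the flux at the node $j$. Squaring the expansion of $Q_{j}^{\overline{n+1}}$, multiplying by the reciprocal $\tfrac{1}{A_{j}^{\overline{n+1}}}=\tfrac{1}{A_{j}^{n}}(1+i\tfrac{\Delta t}{\Delta x}\mu^{n}\phi)+O(\phi^{2})$, and using $\tfrac{(Q_{j}^{n})^{2}}{A_{j}^{n}}=Q_{j}^{n}\mu^{n}$, produces
\begin{equation*}
\frac{(Q_{j}^{\overline{n+1}})^{2}}{A_{j}^{\overline{n+1}}}=Q_{j}^{n}\mu^{n}\,(H_{1}+iH_{2})+O(\phi^{2}),
\end{equation*}
which is how I would read off the groupings $(\ref{E30})$--$(\ref{E30a})$: $H_{1}$ and $H_{2}$ are the real and imaginary parts of the normalised $j$-flux, obtained by collecting the products $C_{11}C_{12}$, $\overline{C}_{11}\overline{C}_{12}$, $\mu^{n}\overline{C}_{12}$, and so on. For the left node the same computation, carried out with the phase-shifted coefficients, organises the normalised $(j-1)$-flux as a perfect square, namely
\begin{equation*}
\frac{(Q_{j-1}^{\overline{n+1}})^{2}}{A_{j-1}^{\overline{n+1}}}=Q_{j}^{n}\mu^{n}\,(K_{1}+iK_{2})^{2}+O(\phi^{2}),
\end{equation*}
with $K_{1}$, $K_{2}$ its first-order real and imaginary parts; expanding $(K_{1}+iK_{2})^{2}=K_{1}^{2}-K_{2}^{2}+2iK_{1}K_{2}$ then isolates the blocks $K_{1}^{2}$, $K_{2}^{2}$, $K_{1}K_{2}$ recorded in $(\ref{E41c})$--$(\ref{E41e})$. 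There the $C_{21},\overline{C}_{21}$ terms are precisely the contribution of the $A_{j-1}^{\overline{n+1}}$ factor through $(\ref{E24})$, while the $C_{22},\overline{C}_{22}$ terms come from the first-order expansion of the friction slope $(\ref{11})$ via $(\ref{E22a})$.

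Finally I would subtract the two fluxes. Factoring out the common $Q_{j}^{n}\mu^{n}$ and using $\mathrm{Re}(K_{1}+iK_{2})^{2}=K_{1}^{2}-K_{2}^{2}$ together with $\mathrm{Im}(K_{1}+iK_{2})^{2}=2K_{1}K_{2}$, the real part becomes $H_{1}-K_{1}^{2}+K_{2}^{2}$ and the imaginary part $H_{2}-2K_{1}K_{2}$, up to the first-order remainders generated by the half-grid phase shift, which I would gather into the announced $2K_{1}K_{2}(1-\tfrac{\Delta t}{\Delta x}\mu^{n})\phi$ and $(K_{2}^{2}-K_{1}^{2})(1-\tfrac{\Delta t}{\Delta x}\mu^{n})\phi$ corrections. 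This reproduces $(\ref{Ee})$.

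The hard part will not be any single identity but the disciplined first-order bookkeeping: the expression is a product of three expansions -- the square of $Q^{\overline{n+1}}$, the reciprocal of $A^{\overline{n+1}}$, and the $e^{-i\phi}$ phase from the $j-1$ shift -- so each $\phi$-linear term in the answer is a sum of many cross products, every one of which must be tracked while $O(\phi^{2})$ is dropped consistently. The most error-prone point is the left node, where the shift acts simultaneously on the explicit phase, on the $\alpha$-dependent coefficients $C_{rs}$, and on the neighbouring stencil values $A_{j-1}^{n}$, $Q_{j-1}^{n}$ feeding the predictor through $(\ref{64})$--$(\ref{65})$; keeping these three channels separate is exactly what forces the elaborate grouping of $(\ref{E41c})$--$(\ref{E41e})$ and the appearance of the $(1-\tfrac{\Delta t}{\Delta x}\mu^{n})\phi$ factor.
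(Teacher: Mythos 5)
Your proposal is correct and is essentially the paper's own argument: you obtain $Q_{j}^{\overline{n+1}}$ by doubling the bracket in (\ref{E21a}), expand $A_{j}^{\overline{n+1}}$, $A_{j-1}^{\overline{n+1}}$ and $Q_{j-1}^{\overline{n+1}}=Q_{j}^{n}(K_{1}+iK_{2})+O(\phi^{2})$ to first order in $\phi$ from the Fourier ansatz, assemble the two normalised fluxes (the paper's (\ref{E38}) and (\ref{E47})) and subtract, exactly as in the text. Two harmless inaccuracies worth noting: your displayed left-flux identity $Q_{j}^{n}\mu^{n}(K_{1}+iK_{2})^{2}$ omits the factor $1+i\bigl(1-\frac{\Delta t}{\Delta x}\mu^{n}\bigr)\phi$ contributed by $1/A_{j-1}^{\overline{n+1}}$, which is precisely where the $(1-\frac{\Delta t}{\Delta x}\mu^{n})\phi$ corrections you later reinstate come from (compare (\ref{E47})); and the $C_{21},\overline{C}_{21}$ terms in $K_{1},K_{2}$ enter through the gravity difference $\frac{g}{2T}\bigl[(A_{j}^{n})^{2}-(A_{j-1}^{n})^{2}\bigr]$ in the predictor step (\ref{65}) evaluated at node $j-1$, not through (\ref{E24}), which the paper uses only in Lemma \ref{l4}.
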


   \begin{proof}
   The following identity holds
   \begin{equation*}
    Q_{j}^{\overline{n+1}}=2\left(\frac{1}{2}(Q_{j}^{\overline{n+1}}+Q_{j}^{n})\right)-Q_{j}^{n}.
   \end{equation*}
   This fact, along with equation $(\ref{E21a})$ result in
   \begin{equation}\label{E26}
    Q_{j}^{\overline{n+1}}=Q^{n}_{j}\left\{1+\frac{\Delta t}{\Delta x}C_{11}\phi+\Delta tC_{12}+
    i\left(\frac{\Delta t}{\Delta x}\overline{C}_{11}\phi+\Delta t\overline{C}_{12}\right)\right\}+O(\phi^{2}),
   \end{equation}
   where $C_{1j}$ and $\overline{C}_{1j},$ $j=1,2,$ are given by relations $(\ref{E22})$ and $(\ref{E23}).$\\

   Taking the square of $Q_{j}^{\overline{n+1}}$ and neglecting the high-order terms in $\phi$, it comes from equation
   $(\ref{E26})$ that
   \begin{equation*}
    (Q_{j}^{\overline{n+1}})^{2}=(Q^{n}_{j})^{2}\left\{1+\frac{\Delta t}{\Delta x}C_{11}\phi+\Delta tC_{12}+
    i\left(\frac{\Delta t}{\Delta x}\overline{C}_{11}\phi+\Delta t\overline{C}_{12}\right)\right\}^{2}+O(\phi^{2})=
   \end{equation*}
   \begin{equation*}
   (Q^{n}_{j})^{2}\left\{\left(1+\frac{\Delta t}{\Delta x}C_{11}\phi+\Delta tC_{12}\right)^{2}-\left(\frac{\Delta t}{\Delta x}
   \overline{C}_{11}\phi+\Delta t\overline{C}_{12}\right)^{2}+2i\left(1+\frac{\Delta t}{\Delta x}C_{11}\phi
    +\Delta tC_{12}\right)\times\right.
   \end{equation*}
   \begin{equation*}
    \left.\left(\frac{\Delta t}{\Delta x}\overline{C}_{11}\phi+\Delta t\overline{C}_{12}\right)\right\}+O(\phi^{2})=
    (Q^{n}_{j})^{2}\left\{1+2\Delta tC_{12}+\Delta t^{2}(C_{12}^{2}-\overline{C}_{12}^{2})+2\frac{\Delta t^{2}}{\Delta x}(C_{11}C_{12}-
    \overline{C}_{11}\overline{C}_{12})\phi+\right.
   \end{equation*}
   \begin{equation}\label{E27}
    \left.2i\left(\Delta t\overline{C}_{12}+\Delta t^{2}\overline{C}_{12}C_{12}+\frac{\Delta t}{\Delta x}\left[\overline{C}_{11}+
    \Delta t(C_{11}\overline{C}_{12}+\overline{C}_{11}C_{12})\right]\phi\right)\right\}+O(\phi^{2}).
   \end{equation}
   Utilizing equation $(\ref{64})$ together with Remark $\ref{r1}$, simple calculations give
   \begin{equation}\label{E28a}
    A_{j}^{\overline{n+1}}=e^{at^{n}}e^{ikx_{j}}-\frac{\Delta t}{\Delta x}e^{bt^{n}}e^{ikx_{j}}(e^{i\phi}-1)=Q_{j}^{n}\left\{e^{(a-b)t^{n}}
    -\frac{\Delta t}{\Delta x}(e^{i\phi}-1)\right\}=Q_{j}^{n}(\mu^{n})^{-1}\left\{1-i\frac{\Delta t}{\Delta x}\mu^{n}\phi\right\}+O(\phi^{2}),
   \end{equation}
   and
   \begin{equation*}
    A_{j-1}^{\overline{n+1}}=e^{at^{n}}e^{ikx_{j-1}}-\frac{\Delta t}{\Delta x}e^{bt^{n}}e^{ikx_{j}}(1-e^{-i\phi})=Q_{j}^{n}\left\{e^{(a-b)t^{n}}
    e^{-i\phi}-\frac{\Delta t}{\Delta x}(1-e^{-i\phi})\right\}=
   \end{equation*}
   \begin{equation}\label{E28}
    Q_{j}^{n}\left\{e^{(a_{1}-b_{1})t^{n}}e^{-i\phi}-\frac{\Delta t}{\Delta x}(1-e^{-i\phi})\right\}=
    Q_{j}^{n}(\mu^{n})^{-1}\left\{1+i\left(-1+\frac{\Delta t}{\Delta x}\mu^{n}\right)\phi\right\}+O(\phi^{2}).
   \end{equation}
   In way similar, one easily shows that
   \begin{equation*}
    Q_{j-1}^{\overline{n+1}}=Q_{j}^{n}\left\{e^{-i\phi}-\frac{\Delta t}{\Delta x}\left\{\frac{g}{2T}|A^{n}_{j}|e^{i\alpha_{3}}
    (1-e^{-2i\phi})+\mu^{n}(1-e^{-i\phi})\right\}+\Delta t\left(\frac{P\overline{\tau}}{\rho}|Q_{j}^{n}|^{-1}e^{-i\alpha_{3}}-
    \right.\right.
   \end{equation*}
   \begin{equation}\label{E41}
    \left.\left.\Gamma_{0}\mu^{n}|A_{j}^{n}|^{-\frac{4}{3}}e^{-i\alpha_{2}}e^{i\frac{4}{3}\phi}\right)\right\}=
    Q^{n}_{j}\left(K_{1}+iK_{1}\right)+O(\phi^{2}),
   \end{equation}
   where
   \begin{equation}\label{E41a}
    K_{1}=1-\frac{\Delta t}{2\Delta x}(\overline{C}_{21}+2C_{21}\phi)+\Delta t(C_{12}-C_{22}\phi),
   \end{equation}
   and
   \begin{equation}\label{E41aa}
    K_{2}=-\phi-\frac{\Delta t}{2\Delta x}\mu^{n}[C_{11}+(1-\overline{C}_{11}-\mu^{n})\phi]-
    \Delta t(\overline{C}_{12}-\overline{C}_{22}\phi),
   \end{equation}
   where $C_{rs},$ $\overline{C}_{rs},$ $r,s=1,2,$ are given by $(\ref{E22})$ and $\alpha_{2}$, $\alpha_{3}$ come from relation
   $(\ref{E23}).$ Furthermore
   \begin{equation*}
    C_{22}=\frac{4}{3}\Gamma_{0}\mu^{n}|A_{j}^{n}|^{-\frac{4}{3}}\sin\alpha_{2}\text{\,\,\,and\,\,\,}
    \overline{C}_{22}=\frac{4}{3}\Gamma_{0}\mu^{n}|A_{j}^{n}|^{-\frac{4}{3}}\cos\alpha_{2}.
   \end{equation*}
   From equation $(\ref{E41}),$ simple computations give
   \begin{equation}\label{E41b}
    (Q_{j-1}^{\overline{n+1}})^{2}=(Q^{n}_{j})^{2}\left(K_{1}^{2}-K_{2}^{2}+2iK_{1}K_{2}\right)+O(\phi^{2}).
   \end{equation}
    Applying Remark $\ref{r1},$ we obtain the following approximation
    \begin{equation*}
    K_{1}^{2}=1+\left(\frac{\Delta t}{2\Delta x}\right)^{2}\left[\overline{C}_{21}^{2}+4C_{21}\overline{C}_{21}\phi\right]+
    \Delta t^{2}\left[\overline{C}_{12}^{2}-2\overline{C}_{12}\overline{C}_{22}\phi\right]-\frac{\Delta t}{\Delta x}(\overline{C}_{21}+
    2C_{21}\phi)+
    \end{equation*}
    \begin{equation*}
    \Delta t(C_{12}-2C_{22}\phi)-\frac{\Delta t^{2}}{\Delta x}\left[C_{12}\overline{C}_{21}-(\overline{C}_{21}C_{22}-2C_{12}C_{21})\phi\right]
    +O(\phi^{2}),
    \end{equation*}
    \begin{equation*}
    K_{2}^{2}=\left(\frac{\Delta t}{2\Delta x}\right)^{2}(\mu^{n})^{2}\left[C_{11}^{2}+2C_{11}(1-\overline{C}_{11}-\mu^{n})\phi\right]+
    \Delta t^{2}\left[C_{12}^{2}-2C_{12}\overline{C}_{22}\phi\right]+\frac{\Delta t}{\Delta x}\mu^{n}C_{11}\phi+
    \end{equation*}
    \begin{equation*}
    2\Delta t\overline{C}_{12}\phi+\frac{\Delta t^{2}}{\Delta x}\mu^{n}\left[C_{11}\overline{C}_{12}-C_{11}\overline{C}_{22}\phi+
    \overline{C}_{12}(1-\overline{C}_{11}-\mu^{n})\phi\right]+O(\phi^{2}),
    \end{equation*}
    and
     \begin{equation*}
    K_{1}K_{2}=-\left\{\phi-\left(\frac{\Delta t}{2\Delta x}\right)^{2}\mu^{n}\left[C_{11}\overline{C}_{21}+
    [\overline{C}_{21}(1-\overline{C}_{11}-\mu^{n})+2C_{11}C_{21}]\phi\right]+
    \Delta t\left[C_{12}-(C_{12}\overline{C}_{22}-C_{12})\phi\right]\right.
    \end{equation*}
    \begin{equation*}
    +\frac{\Delta t}{2\Delta x}\mu^{n}\left[C_{11}+(1-\overline{C}_{11}-\mu^{n}-\overline{C}_{21}(\mu^{n})^{-1})\phi\right]
    +\frac{\Delta t^{2}}{2\Delta x}\mu^{n}\left[C_{11}C_{12}-\overline{C}_{12}\overline{C}_{21}(\mu^{n})^{-1}+[C_{12}(1-\overline{C}_{11}-
    \mu^{n})-\right.
    \end{equation*}
    \begin{equation*}
    \left.\left.C_{11}C_{22}+\overline{C}_{21}C_{22}(\mu^{n})^{-1}-2C_{21}\overline{C}_{12}(\mu^{n})^{-1}]\phi\right]+\Delta t^{2}
    [C_{12}\overline{C}_{12}-(C_{12}\overline{C}_{22}+\overline{C}_{12}C_{22})\phi]\right\}+O(\phi^{2}).
    \end{equation*}
    Combining approximations $(\ref{E27})$ and $(\ref{E28a})$ on the one hand, $(\ref{E41b})$ and $(\ref{E28})$ on the other hand,
    straightforward computations yield
    \begin{equation}\label{E38}
    \frac{(Q_{j}^{\overline{n+1}})^{2}}{A_{j}^{\overline{n+1}}}=Q^{n}_{j}\mu^{n}\left\{H_{1}+iH_{2}\right\}+O(\phi^{2}),
   \end{equation}
     and
     \begin{equation}\label{E47}
    \frac{(Q_{j-1}^{\overline{n+1}})^{2}}{A_{j-1}^{\overline{n+1}}}=Q^{n}_{j}\mu^{n}\left\{K_{1}^{2}-K_{2}^{2}-2K_{1}K_{2}\left(1-
    \frac{\Delta t}{\Delta x}\mu^{n}\right)\phi+i\left[2K_{1}K_{2}+\left(1-\frac{\Delta t}{\Delta x}\mu^{n}\right)(K_{1}^{2}-K_{2}^{2})
    \phi\right]\right\}+O(\phi^{2}),
   \end{equation}
   where $H_{1}$, $H_{2}$, $K_{1}^{2}$, $K_{2}^{2}$ and $K_{1}K_{2}$ are given by equations $(\ref{E30})$, $(\ref{E30a})$ $(\ref{E41c})$,
     $(\ref{E41d})$ and $(\ref{E41e})$, respectively. Subtracting equation $(\ref{E47})$ from approximation $(\ref{E38})$ completes the proof.
   \end{proof}

   \begin{lemma}\label{l3}
   For $n$ and $j$ be nonnegative integers, the term $\frac{Q_{j}^{\overline{n+1}}|Q_{j}^{\overline{n+1}}|}
   {\left(A_{j}^{\overline{n+1}}\right)^{\frac{7}{3}}}$ can be approximated as
     \begin{equation*}
     \frac{Q_{j}^{\overline{n+1}}|Q_{j}^{\overline{n+1}}|}{(A_{j}^{\overline{n+1}})^{\frac{7}{3}}}=Q^{n}|Q^{n}|^{-\frac{4}{3}}
     (\mu^{n})^{\frac{7}{3}}\left\{1+\left(\frac{\Delta t}{\Delta x}\right)^{2}(\mu^{n})^{2}\phi^{2}\right\}^{-\frac{7}{3}}
     \left\{\left(1+\frac{\Delta t}{\Delta x}C_{11}\phi+2\Delta t C_{12}\right)^{2}+\right.
     \end{equation*}
     \begin{equation*}
        \left.\left(\frac{\Delta t}{\Delta x}\overline{C}_{11}\phi+2\Delta t \overline{C}_{12}\right)^{2}\right\}^{\frac{1}{2}}
     \left\{1+\frac{\Delta t}{\Delta x}C_{11}\phi+2\Delta t C_{12}+i\left(\frac{\Delta t}{\Delta x}\overline{C}_{11}\phi
     +2\Delta t \overline{C}_{12}\right)\right\}\times
     \end{equation*}
     \begin{equation}\label{E52a}
     \left\{C_{31}+\frac{\Delta t}{\Delta x}\overline{C}_{32}\phi+i\left[-\overline{C}_{31}+\frac{\Delta t}{\Delta x}C_{32}
     \phi\right]\right\}^{\frac{7}{3}}+O(\phi^{2}).
     \end{equation}
     where the functions $C_{1l}$ and $\widehat{C}_{1l},$ $l=1,2,$ are defined by relations $(\ref{E22})$ and $(\ref{E23}),$ $C_{3l}$
     and $\overline{C}_{3l}$, $l=1,2,$ are given by
      \begin{equation}\label{E52aa}
      C_{31}=\cos\alpha_{2};\text{\,\,\,}\overline{C}_{31}=-\sin\alpha_{2};\text{\,\,\,}
      C_{32}=\mu^{n}\cos\alpha_{2};\text{\,\,\,}\overline{C}_{32}=\mu^{n}\sin\alpha_{2}.
      \end{equation}
       \end{lemma}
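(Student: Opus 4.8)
The plan is to insert the first-order-in-$\phi$ expansions already established for the predicted quantities into the numerator and denominator of $\frac{Q_{j}^{\overline{n+1}}|Q_{j}^{\overline{n+1}}|}{(A_{j}^{\overline{n+1}})^{7/3}}$ and then to expand each resulting factor about $\phi=0$, keeping terms through first order and discarding $O(\phi^{2})$ by virtue of Remark \ref{r1}. The natural first move is to pull out the background value at $\phi=0$: writing $Q_{j}^{n}=|Q^{n}|e^{i\alpha_{3}}$ and $A_{j}^{n}=|A^{n}|e^{i\alpha_{3}}$ (legitimate since the analysis assumes $a_{2}=b_{2}$) and using $|A^{n}|=|Q^{n}|(\mu^{n})^{-1}$, the leading amplitude of the quotient reduces to $\frac{Q^{n}|Q^{n}|}{|A^{n}|^{7/3}}=Q^{n}|Q^{n}|^{-4/3}(\mu^{n})^{7/3}$, precisely the prefactor displayed in \pref{E52a}. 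What remains is to expand the three unit corrections that multiply it.

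I would treat those corrections one at a time. The complex factor coming from $Q_{j}^{\overline{n+1}}$ is read directly from \pref{E26}. For the modulus $|Q_{j}^{\overline{n+1}}|$ I would apply the polar identity $|z|=((\operatorname{Re}z)^{2}+(\operatorname{Im}z)^{2})^{1/2}$ to the same expansion \pref{E26}; taking the product of this square-root factor with the linear symbol is what carries the amplitude corrections simultaneously into the $\{\cdots\}^{1/2}$ factor and into the surviving linear complex factor of \pref{E52a}. For the denominator I would write $A_{j}^{\overline{n+1}}=|A^{n}|e^{i\alpha_{3}}\bigl(1-i\tfrac{\Delta t}{\Delta x}\mu^{n}\phi\bigr)$ from \pref{E28a}, separate modulus from phase, and raise to the power $-7/3$: the modulus part produces $\{1+(\tfrac{\Delta t}{\Delta x})^{2}(\mu^{n})^{2}\phi^{2}\}^{-7/3}$, which is $1+O(\phi^{2})$ and hence inert to the order retained, while the phase $e^{i\alpha_{3}}$ and its first-order $\phi$-correction are repackaged through the trigonometric coefficients \pref{E52aa} into the bracket raised to the power $7/3$. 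Throughout, the elementary expansions $e^{i\phi}-1=i\phi+O(\phi^{2})$ and $e^{2i\phi}-1=2i\phi+O(\phi^{2})$ from the proof of Lemma \ref{l1} are the only analytic inputs needed.

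The main obstacle is the consistent bookkeeping of the two genuinely nonanalytic operations---the modulus $|\cdot|$ and the fractional power $(\cdot)^{7/3}$---acting on complex Fourier symbols, since each demands a branch choice and a clean polar decomposition before any series expansion is taken. The delicate point is to keep all three corrections accurate through $O(\phi)$ while forming their product, so that cross terms (for instance the interaction of the argument correction of $A_{j}^{\overline{n+1}}$ with the linear part of $Q_{j}^{\overline{n+1}}$) are retained exactly rather than dropped prematurely. Once the polar forms are fixed and the first-order expansions are applied, the remaining work is routine algebra collecting the factors into the form \pref{E52a}; this lemma then supplies, together with Lemmas \ref{l1} and \ref{l2}, the last ingredient for the amplification factor of the discharge equation \pref{67} used in Proposition \ref{p3}.
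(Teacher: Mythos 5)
Your proposal is correct to the order retained and mirrors the paper's proof in almost every step: like the paper, you feed the expansions \pref{E26} and \pref{E28a} into numerator and denominator, extract the background amplitude $Q^{n}|Q^{n}|^{-4/3}(\mu^{n})^{7/3}$, form the two moduli exactly as in \pref{E50a} and \pref{E50b}, and keep all cross terms through $O(\phi)$ before discarding $O(\phi^{2})$ via Remark \ref{r1}. The one place where you genuinely diverge is the treatment of the negative fractional power of the complex quantity $A_{j}^{\overline{n+1}}$. The paper never raises a complex symbol to a negative fractional power: it invokes the conjugate identity \pref{E51}, namely $Q|Q|/A^{7/3}=|A|^{-14/3}\,|Q|\,Q\,(\overline{A})^{7/3}$, so that the exponent $-14/3$ acts on the real modulus --- which is precisely what produces the factor $\bigl\{1+(\Delta t/\Delta x)^{2}(\mu^{n})^{2}\phi^{2}\bigr\}^{-7/3}$ displayed in \pref{E52a} --- while the phase and the first-order unit correction of $\overline{A_{j}^{\overline{n+1}}}$ are packaged into the bracket raised to the power $7/3$ through the coefficients \pref{E52aa}. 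Your direct polar route ($A=|A|e^{i\theta}$, then raise to $-7/3$) is equally legitimate once a branch is fixed, but it contains a bookkeeping slip: separating modulus from phase and applying the exponent $-7/3$ yields the modulus contribution $\bigl\{1+(\Delta t/\Delta x)^{2}(\mu^{n})^{2}\phi^{2}\bigr\}^{-7/6}$, not $-7/3$; in the paper's arrangement the missing half of the exponent is hidden in the modulus of the bracket $\{C_{31}+\cdots\}^{7/3}$. Since any real power of $1+O(\phi^{2})$ is again $1+O(\phi^{2})$, this discrepancy is immaterial for the asserted approximation --- as you yourself note, the factor is inert at the order retained --- but to reproduce the formula \pref{E52a} verbatim, with $-7/3$ outside and the conjugate-type bracket to the power $7/3$, you need the identity \pref{E51} rather than the plain polar decomposition. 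What the conjugate trick buys is that the only fractional power applied to a genuinely complex, $\phi$-dependent symbol is a positive one acting on $\overline{A_{j}^{\overline{n+1}}}$, keeping the branch question trivial near $\phi=0$; what your route buys is transparency about the origin of the phase $\alpha_{2}=\tfrac{7}{3}\alpha_{3}$ in \pref{E52aa}. There is no genuine gap.
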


   \begin{proof}
   It comes from approximation $(\ref{E26})$ that
   \begin{equation}\label{E50}
    Q_{j}^{\overline{n+1}}=Q^{n}_{j}\left\{1+\frac{\Delta t}{\Delta x}C_{11}\phi+\Delta tC_{12}+
    i\left(\frac{\Delta t}{\Delta x}\overline{C}_{11}\phi+\Delta t\overline{C}_{12}\right)\right\}+O(\phi^{2}),
   \end{equation}
   where $C_{1j}$ and $\overline{C}_{1j},$ $j=1,2,$ are given by relations $(\ref{E22})$ and $(\ref{E23}).$
   So, the modulus of the discharge at the temporary time level $n+1$ and at the position $x_{j}$ is approximated as
   \begin{equation}\label{E50a}
    |Q_{j}^{\overline{n+1}}|=|Q_{j}^{n}|\left\{\left(1+\frac{\Delta t}{\Delta x}C_{12}\phi+\Delta t C_{12}\right)^{2}
    +\left(\frac{\Delta t}{\Delta x}\overline{C}_{11}\phi+\Delta t \overline{C}_{12}\right)^{2}\right\}^{\frac{1}{2}}+O(\phi^{2}),
   \end{equation}
    Similarly, from relation $(\ref{E28a})$ we have that
    \begin{equation*}
    A_{j}^{\overline{n+1}}=Q_{j}^{n}\left\{e^{(a-b)t^{n}}-\frac{\Delta t}{\Delta x}(e^{i\phi}-1)\right\}=
    Q_{j}^{n}(\mu^{n})^{-1}\left\{1-i\frac{\Delta t}{\Delta x}\mu^{n}\phi\right\}+O(\phi^{2}).
   \end{equation*}
    So, the modulus of the cross section at predicted time $t^{n+1}$ and at position $x_{j}$ is approximated by
    \begin{equation}\label{E50b}
     |A_{j}^{\overline{n+1}}|=|Q_{j}^{n}|(\mu^{n})^{-1}\left\{1+\left(\frac{\Delta t}{\Delta x}\right)^{2}
     (\mu^{n})^{2}\phi^{2}\right\}^{\frac{1}{2}}+O(\phi^{2}).
    \end{equation}
     On the other hand, the following equality holds
     \begin{equation}\label{E51}
        \frac{Q_{j}^{\overline{n+1}}|Q_{j}^{\overline{n+1}}|}{(A_{j}^{\overline{n+1}})^{\frac{7}{3}}}=
        |A_{j}^{\overline{n+1}}|^{-\frac{14}{3}}|Q_{j}^{\overline{n+1}}|Q_{j}^{\overline{n+1}}
        \left(\overline{A_{j}^{\overline{n+1}}}\right)^{\frac{7}{3}},
     \end{equation}
     where $\overline{A_{j}^{\overline{n+1}}}$ designates the conjugate of $A_{j}^{\overline{n+1}}.$\\
     Now, combining relations $(\ref{E50}),$ $(\ref{E50a}),$ $(\ref{E28a}),$ and $(\ref{E50b}),$ by straightforward
     computations, an approximate formula of $(\ref{E51}),$ is given by
     \begin{equation*}
     \frac{Q_{j}^{\overline{n+1}}|Q_{j}^{\overline{n+1}}|}{(A_{j}^{\overline{n+1}})^{\frac{7}{3}}}=|Q^{n}||Q^{n}|^{-\frac{14}{3}}Q_{j}^{n}
     (\overline{Q_{j}^{n}})^{\frac{7}{3}}(\mu^{n})^{\frac{7}{3}}\left\{1+\left(\frac{\Delta t}{\Delta x}\right)^{2}
     (\mu^{n})^{2}\phi^{2}\right\}^{-\frac{7}{3}}
     \end{equation*}
     \begin{equation*}
     \left\{\left(1+\frac{\Delta t}{\Delta x}C_{11}\phi+2\Delta t C_{12}\right)^{2}+\left(\frac{\Delta t}{\Delta x}\overline{C}_{11}\phi
     +2\Delta t \overline{C}_{12}\right)^{2}\right\}^{\frac{1}{2}}
     \end{equation*}
     \begin{equation}\label{E51a}
     \left\{1+\frac{\Delta t}{\Delta x}C_{11}\phi+2\Delta t C_{12}+i\left(\frac{\Delta t}{\Delta x}\overline{C}_{11}\phi
     +2\Delta t \overline{C}_{12}\right)\right\}\left\{1+i\frac{\Delta t}{\Delta x}\mu^{n}\phi\right\}^{\frac{7}{3}}+O(\phi^{2}).
     \end{equation}
     Since $Q_{j}^{n}=e^{(b_{1}+ia_{2})t^{n}}e^{ikx_{j}}=e^{b_{1}t^{n}}e^{i(a_{2}t^{n}+kx_{j})}=e^{b_{1}t^{n}}e^{i\alpha_{3}},$ and the
     conjugate of $Q_{j}^{n}$ is given by $\overline{Q_{j}^{n}}=e^{b_{1}t^{n}}e^{-i\alpha_{3}}=|Q_{j}^{n}|e^{-i\alpha_{3}}.$ This
     fact together with equation $(\ref{E51a})$ yield
     \begin{equation*}
     \frac{Q_{j}^{\overline{n+1}}|Q_{j}^{\overline{n+1}}|}{(A_{j}^{\overline{n+1}})^{\frac{7}{3}}}=Q^{n}|Q^{n}|^{-\frac{4}{3}}
     (\mu^{n})^{\frac{7}{3}}\left\{1+\left(\frac{\Delta t}{\Delta x}\right)^{2}(\mu^{n})^{2}\phi^{2}\right\}^{-\frac{7}{3}}
     \left\{\left(1+\frac{\Delta t}{\Delta x}C_{11}\phi+2\Delta t C_{12}\right)^{2}+\right.
     \end{equation*}
     \begin{equation*}
        \left.\left(\frac{\Delta t}{\Delta x}\overline{C}_{11}\phi+2\Delta t \overline{C}_{12}\right)^{2}\right\}^{\frac{1}{2}}
     \left\{1+\frac{\Delta t}{\Delta x}C_{11}\phi+2\Delta t C_{12}+i\left(\frac{\Delta t}{\Delta x}\overline{C}_{11}\phi
     +2\Delta t \overline{C}_{12}\right)\right\}\times
     \end{equation*}
     \begin{equation*}
     \left\{C_{31}+\frac{\Delta t}{\Delta x}\overline{C}_{32}\phi+i\left[-\overline{C}_{31}+\frac{\Delta t}{\Delta x}C_{32}
     \phi\right]\right\}^{\frac{7}{3}}+O(\phi^{2}).
     \end{equation*}
     where $C_{3l}$ and $\overline{C}_{3l}$, $l=1,2,$ are defined in relation $(\ref{E52aa}).$ This completes the proof of Lemma $\ref{l3}.$
     \end{proof}

    Armed with Lemmas $\ref{l1},$ $\ref{l2}$ and $\ref{l3},$ we are ready to describe the amplification factor of the numerical
    scheme $(\ref{67}).$

   \begin{lemma}\label{l4}
   The amplification factor of the numerical scheme $(\ref{67})$ is approximated by
   \begin{equation*}
    e^{b\Delta t}=1+\Delta t\left\{\frac{1}{2}C_{12}+C_{33}-\Gamma_{0}|Q_{j}^{n}|^{-\frac{4}{3}}(\mu^{n})^{\frac{7}{3}}\left[1+
    \left(\frac{\Delta t}{\Delta x}\right)^{2}(\mu^{n})^{2}\phi^{2}\right]^{-\frac{7}{3}}\left[\left(1+2\Delta tC_{12}+\frac{\Delta t}
    {\Delta x}C_{11}\phi\right)^{2}+\right.\right.
   \end{equation*}
   \begin{equation*}
   \left. \left.\left(2\Delta t\overline{C}_{12}+\frac{\Delta t}{\Delta x}\overline{C}_{11}\phi\right)^{2}\right]
    \left[\left(C_{31}+\frac{\Delta t}{\Delta x}\overline{C}_{32}\phi\right)^{2}+\left(\overline{C}_{31}-
    \frac{\Delta t}{\Delta x}C_{32}\phi\right)^{2}\right]^{\frac{7}{6}}\cos\left(\theta_{1}+\frac{7}{3}\theta_{2}\right)\right\}-
   \end{equation*}
   \begin{equation*}
    \frac{\Delta t}{\Delta x}\left\{(C_{21}-\frac{1}{2}C_{11})\phi+\frac{1}{2}\mu^{n}\left[H_{1}-K_{1}^{2}+K^{2}_{2}+2K_{1}K_{2}\left(1-
    \frac{\Delta t}{\Delta x}\mu^{n}\right)\phi\right]\right\}+
   \end{equation*}
    \begin{equation*}
    i\left\{ \Delta t\left\{\frac{1}{2}\overline{C}_{12}-\overline{C}_{33}-\Gamma_{0}|Q_{j}^{n}|^{-\frac{4}{3}}(\mu^{n})^{\frac{7}{3}}\left[1+
    \left(\frac{\Delta t}{\Delta x}\right)^{2}(\mu^{n})^{2}\phi^{2}\right]^{-\frac{7}{3}}\left[\left(1+2\Delta tC_{12}+\frac{\Delta t}
    {\Delta x}C_{11}\phi\right)^{2}+\right.\right.\right.
   \end{equation*}
   \begin{equation*}
   \left. \left.\left(2\Delta t\overline{C}_{12}+\frac{\Delta t}{\Delta x}\overline{C}_{11}\phi\right)^{2}\right]
    \left[\left(C_{31}+\frac{\Delta t}{\Delta x}\overline{C}_{32}\phi\right)^{2}+\left(\overline{C}_{31}-
    \frac{\Delta t}{\Delta x}C_{32}\phi\right)^{2}\right]^{\frac{7}{6}}\sin\left(\theta_{1}+\frac{7}{3}\theta_{2}\right)\right\}-
   \end{equation*}
   \begin{equation}\label{E48a}
    \left.\frac{\Delta t}{\Delta x}\left\{(\overline{C}_{21}-\frac{1}{2}\overline{C}_{11})\phi+\frac{1}{2}\mu^{n}\left[H_{2}-2K_{1}K_{2}
    +(K^{2}_{2}-K_{1}^{2})\left(1-\frac{\Delta t}{\Delta x}\mu^{n}\right)\phi\right]\right\}\right\}+O(\phi^{2}).
   \end{equation}
   where
   \begin{equation}\label{E48b}
    C_{33}=\frac{P\overline{\tau}}{\rho}|Q^{n}_{j}|^{-1}\cos\alpha_{3},\text{\,\,\,\,}
    \overline{C}_{33}=\frac{P\overline{\tau}}{\rho}|Q^{n}_{j}|^{-1}\sin\alpha_{3},
   \end{equation}
   $H_{1}$, $H_{2}$, $K_{1}^{2}$, $K_{2}^{2}$ and $K_{1}K_{2}$ are given by equations $(\ref{E30})$, $(\ref{E30a})$ $(\ref{E41c})$,
     $(\ref{E41d})$ and $(\ref{E41e})$, respectively; $C_{lj},$ $\overline{C}_{lj},$ $j,l=1,2;$ come from $(\ref{E22})$; $\alpha_{2}$
     and $\alpha_{3}$ follow from equation $(\ref{E23})$ and $C_{3l},$ $\overline{C}_{3l},$ $l=1,2,$ are defined by relation
      $(\ref{E52aa}).$ The functions $\theta_{1}$ and $\theta_{2}$ are given implicitly by relations
      \begin{equation*}
      e^{i\theta_{1}}=\frac{1+2\Delta tC_{12}+\frac{\Delta t}{\Delta x}C_{11}\phi+i\left(2\Delta t\overline{C}_{12}
      +\frac{\Delta t}{\Delta x}\overline{C}_{11}\phi\right)}{\sqrt{\left(1+2\Delta tC_{12}+\frac{\Delta t}{\Delta x}
      C_{11}\phi\right)^{2}+\left(2\Delta t\overline{C}_{12}+\frac{\Delta t}{\Delta x}\overline{C}_{11}\phi\right)^{2}}}
      \end{equation*}
      and
      \begin{equation*}
      e^{i\theta_{2}}=\frac{C_{31}+\frac{\Delta t}{\Delta x}\overline{C}_{32}\phi+i\left(-\overline{C}_{31}
      +\frac{\Delta t}{\Delta x}C_{32}\phi\right)}{\sqrt{\left(C_{31}+\frac{\Delta t}{\Delta x}\overline{C}_{32}\phi\right)^{2}
      +\left(\overline{C}_{31}-\frac{\Delta t}{\Delta x}C_{32}\phi\right)^{2}}}
      \end{equation*}
   \end{lemma}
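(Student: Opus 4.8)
The plan is to substitute the three approximations of Lemmas~\ref{l1}, \ref{l2} and \ref{l3} directly into the corrector step \pref{67} and to read off the coefficient of $Q_j^n$: under the Von Neumann ansatz one has $Q_j^{n+1}=e^{b\Delta t}Q_j^n$, so the amplification factor is exactly $e^{b\Delta t}=Q_j^{n+1}/Q_j^n$. First I would distribute the outer factor $\frac12$ in \pref{67} and split the right-hand side into five recognisable blocks: the averaged term $\frac12(Q_j^n+Q_j^{\overline{n+1}})$, the hydrostatic flux difference $\frac{g}{2T}[(A_j^{\overline{n+1}})^2-(A_{j-1}^{\overline{n+1}})^2]$, the momentum flux difference $\frac{(Q_j^{\overline{n+1}})^2}{A_j^{\overline{n+1}}}-\frac{(Q_{j-1}^{\overline{n+1}})^2}{A_{j-1}^{\overline{n+1}}}$, the constant bed-slope source $\frac{P\overline{\tau}}{\rho}$, and the friction source $\Gamma_0\frac{Q_j^{\overline{n+1}}|Q_j^{\overline{n+1}}|}{(A_j^{\overline{n+1}})^{7/3}}$. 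The first block is replaced by \pref{E21a}, the second by \pref{E24} and the third by \pref{Ee}; the constant source is treated directly by writing $1/Q_j^n=|Q_j^n|^{-1}e^{-i\alpha_3}$, which produces precisely the functions $C_{33}$ and $\overline{C}_{33}$ of \pref{E48b}.

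The delicate block is the friction term, for which I would invoke Lemma~\ref{l3}. Approximation \pref{E52a} expresses it as a product of a real prefactor $Q^n|Q^n|^{-4/3}(\mu^n)^{7/3}[1+(\frac{\Delta t}{\Delta x})^2(\mu^n)^2\phi^2]^{-7/3}$, a nonnegative modulus $\{(1+\frac{\Delta t}{\Delta x}C_{11}\phi+2\Delta tC_{12})^2+(\frac{\Delta t}{\Delta x}\overline{C}_{11}\phi+2\Delta t\overline{C}_{12})^2\}^{1/2}$, a first complex factor, and the $7/3$-power of a second complex factor built from $C_{31},\overline{C}_{31},C_{32},\overline{C}_{32}$ of \pref{E52aa}. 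Writing the two complex factors in polar form $r_1e^{i\theta_1}$ and $r_2e^{i\theta_2}$---which is exactly the content of the implicit definitions of $\theta_1$ and $\theta_2$ in the statement---their product collapses to a single modulus times $e^{i(\theta_1+\frac{7}{3}\theta_2)}$. The explicit modulus factor contributes a second copy of $r_1$, so that $r_1$ is upgraded to $r_1^2$, i.e.\ the bracket $[(1+\cdots)^2+(\cdots)^2]$ appears to the first power, while $r_2^{7/3}$ becomes $[(C_{31}+\frac{\Delta t}{\Delta x}\overline{C}_{32}\phi)^2+(\overline{C}_{31}-\frac{\Delta t}{\Delta x}C_{32}\phi)^2]^{7/6}$. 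Splitting $e^{i(\theta_1+\frac{7}{3}\theta_2)}$ into cosine and sine then yields the real and imaginary friction contributions displayed in \pref{E48a}.

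Finally I would divide the fully assembled expression by $Q_j^n$, which cancels the factor $Q^n$ carried by every block, and collect the survivors into real and imaginary parts, keeping only terms through first order in $\phi$ and discarding $O(\phi^2)$. The real part accumulates the constant $1$ from \pref{E21a}, the $\Delta t$-multiplied source and friction terms, and the $\frac{\Delta t}{\Delta x}$-multiplied combination $(C_{21}-\frac12 C_{11})\phi+\frac12\mu^n[H_1-K_1^2+K_2^2+2K_1K_2(1-\frac{\Delta t}{\Delta x}\mu^n)\phi]$ supplied by \pref{E24} and \pref{Ee}; the imaginary part is assembled symmetrically from $\overline{C}_{12}$, $\overline{C}_{33}$, $\overline{C}_{21}$, $\overline{C}_{11}$ and $H_2$, together with $(K_2^2-K_1^2)(1-\frac{\Delta t}{\Delta x}\mu^n)\phi$. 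Comparing with the definitions \pref{E22}, \pref{E23}, \pref{E52aa} and \pref{E48b} reproduces \pref{E48a}.

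The principal obstacle is organisational rather than conceptual: one must keep the polar decomposition of the friction factor consistent with the first-order truncation, so that no $\phi^2$ contribution is silently promoted into the retained expression when the modulus $[1+(\frac{\Delta t}{\Delta x})^2(\mu^n)^2\phi^2]^{-7/3}$ and the two complex factors are multiplied out, and one must match every coefficient against the numerous auxiliary functions $C_{rs},\overline{C}_{rs},H_1,H_2,K_1,K_2$ without sign errors. I expect the $7/3$-power, which forces the argument-addition $\theta_1+\frac{7}{3}\theta_2$ rather than a mere product, to be the point where the bookkeeping is most error-prone.
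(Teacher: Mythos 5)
Your proposal is correct and follows exactly the route the paper intends: the paper's own proof of Lemma \ref{l4} is the single sentence that the result is ``obvious'' from Lemmas \ref{l1}, \ref{l2} and \ref{l3}, and what you have written out --- substituting \pref{E21a}, \pref{E24}, \pref{Ee} and \pref{E52a} into the corrector step \pref{67}, identifying $e^{b\Delta t}=Q_j^{n+1}/Q_j^n$ under the Von Neumann ansatz, and collapsing the friction factor via the polar forms $r_1e^{i\theta_1}$, $r_2^{7/3}e^{i\frac{7}{3}\theta_2}$ so that the bracket appears to the first power alongside the $\tfrac{7}{6}$-power and the angle $\theta_1+\tfrac{7}{3}\theta_2$ --- is precisely the computation the paper leaves implicit. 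Your identification of the constant source contributing $C_{33}-i\overline{C}_{33}$ through $1/Q_j^n=|Q_j^n|^{-1}e^{-i\alpha_3}$ also matches the sign structure of \pref{E48a}.
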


   \begin{proof}
   The proof is obvious according to Lemmas $\ref{l1},$ $\ref{l2}$ and $\ref{l3}.$
   \end{proof}

   Now, let us prove Proposition $\ref{p3}.$
   \begin{proof}
   (of Proposition $\ref{p3}$)\\
    Considering relation $(\ref{E48a})$ and applying Remark $\ref{r1},$ the squared modulus of the amplification factor
    of the numerical scheme $(\ref{67})$ is approximated as
    \begin{equation*}
    |e^{b\Delta t}|^{2}=\left\{1+\Delta t\left\{\frac{1}{2}C_{12}+C_{33}-\Gamma_{0}|Q_{j}^{n}|^{-\frac{4}{3}}(\mu^{n})^{\frac{7}{3}}\left[1+
    \left(\frac{\Delta t}{\Delta x}\right)^{2}(\mu^{n})^{2}\phi^{2}\right]^{-\frac{7}{3}}\left[\left(1+2\Delta tC_{12}+\frac{\Delta t}
    {\Delta x}C_{11}\phi\right)^{2}+\right.\right.\right.
   \end{equation*}
   \begin{equation*}
   \left. \left.\left(2\Delta t\overline{C}_{12}+\frac{\Delta t}{\Delta x}\overline{C}_{11}\phi\right)^{2}\right]
    \left[\left(C_{31}+\frac{\Delta t}{\Delta x}\overline{C}_{32}\phi\right)^{2}+\left(\overline{C}_{31}-
    \frac{\Delta t}{\Delta x}C_{32}\phi\right)^{2}\right]^{\frac{7}{6}}\cos\left(\theta_{1}+\frac{7}{3}\theta_{2}\right)\right\}-
   \end{equation*}
   \begin{equation*}
    \left.\frac{\Delta t}{\Delta x}\left\{(C_{21}-\frac{1}{2}C_{11})\phi+\frac{1}{2}\mu^{n}\left[H_{1}-K_{1}^{2}+K^{2}_{2}+2K_{1}K_{2}\left(1-
    \frac{\Delta t}{\Delta x}\mu^{n}\right)\phi\right]\right\}\right\}^{2}+
   \end{equation*}
    \begin{equation*}
    \left\{ \Delta t\left\{\frac{1}{2}\overline{C}_{12}-\overline{C}_{33}-\Gamma_{0}|Q_{j}^{n}|^{-\frac{4}{3}}(\mu^{n})^{\frac{7}{3}}\left[1+
    \left(\frac{\Delta t}{\Delta x}\right)^{2}(\mu^{n})^{2}\phi^{2}\right]^{-\frac{7}{3}}\left[\left(1+2\Delta tC_{12}+\frac{\Delta t}
    {\Delta x}C_{11}\phi\right)^{2}+\right.\right.\right.
   \end{equation*}
   \begin{equation*}
   \left. \left.\left(2\Delta t\overline{C}_{12}+\frac{\Delta t}{\Delta x}\overline{C}_{11}\phi\right)^{2}\right]
    \left[\left(C_{31}+\frac{\Delta t}{\Delta x}\overline{C}_{32}\phi\right)^{2}+\left(\overline{C}_{31}-
    \frac{\Delta t}{\Delta x}C_{32}\phi\right)^{2}\right]^{\frac{7}{6}}\sin\left(\theta_{1}+\frac{7}{3}\theta_{2}\right)\right\}-
   \end{equation*}
   \begin{equation}\label{E54}
    \left.\frac{\Delta t}{\Delta x}\left\{(\overline{C}_{21}-\frac{1}{2}\overline{C}_{11})\phi+\frac{1}{2}\mu^{n}\left[H_{2}-2K_{1}K_{2}
    +(K^{2}_{2}-K_{1}^{2})\left(1-\frac{\Delta t}{\Delta x}\mu^{n}\right)\phi\right]\right\}\right\}^{2}+O(\phi^{4}).
   \end{equation}
   Of course, the aim of this paper is to give the general picture of a necessary stability condition. Since we are working under
   the assumptions $0<\Delta t<\Delta x<1,$ $0<|k\Delta x|=|\phi|<<\pi,$ and the notations can become quite heavy, for the sake of
   readability we neglect the $O(\phi^{4}).$ Furthermore, applying Remark $\ref{r1}$, $\left[1+\left(\frac{\Delta t}{\Delta x}
   \right)^{2}(\mu^{n})^{2}\phi^{2}\right]^{-\frac{7}{3}}=1+O(\phi^{2}),$ neglecting the term $O(\phi^{2})$, $\left[1+\left(\frac{\Delta t}
   {\Delta x}\right)^{2}(\mu^{n})^{2}\phi^{2}\right]^{-\frac{7}{3}}$ is approximated by $1.$ However, the tracking of the infinitesimal
   terms does not compromise the result. Using this, estimate $|e^{b\Delta t}|\leq1,$ means that there exists a nonnegative number
   $r^{*}$ between $0$ and $1$ such that
      \begin{equation*}
     \left|1+\Delta t\left\{\frac{1}{2}C_{12}+C_{33}-\Gamma_{0}|Q_{j}^{n}|^{-\frac{4}{3}}(\mu^{n})^{\frac{7}{3}}
     \left[\left(1+2\Delta tC_{12}+\frac{\Delta t}{\Delta x}C_{11}\phi\right)^{2}+\left(2\Delta t\overline{C}_{12}+\frac{\Delta t}
     {\Delta x}\overline{C}_{11}\phi\right)^{2}\right]\right.\right.
   \end{equation*}
   \begin{equation*}
    \left.\left[\left(C_{31}+\frac{\Delta t}{\Delta x}\overline{C}_{32}\phi\right)^{2}+\left(\overline{C}_{31}-
    \frac{\Delta t}{\Delta x}C_{32}\phi\right)^{2}\right]^{\frac{7}{6}}\cos\left(\theta_{1}+\frac{7}{3}\theta_{2}\right)\right\}-
   \end{equation*}
   \begin{equation}\label{E55}
    \left.\frac{\Delta t}{\Delta x}\left\{(C_{21}-\frac{1}{2}C_{11})\phi+\frac{1}{2}\mu^{n}\left[H_{1}-K_{1}^{2}+K^{2}_{2}+2K_{1}K_{2}\left(1-
    \frac{\Delta t}{\Delta x}\mu^{n}\right)\phi\right]\right\}\right|\leq \sqrt{1-r^{*}}
   \end{equation}
   and
    \begin{equation*}
    \left|\Delta t\left\{\frac{1}{2}\overline{C}_{12}-\overline{C}_{33}-\Gamma_{0}|Q_{j}^{n}|^{-\frac{4}{3}}(\mu^{n})^{\frac{7}{3}}
    \left[\left(1+2\Delta tC_{12}+\frac{\Delta t}{\Delta x}C_{11}\phi\right)^{2}+\left(2\Delta t\overline{C}_{12}+\frac{\Delta t}
    {\Delta x}\overline{C}_{11}\phi\right)^{2}\right]\right.\right.
   \end{equation*}
   \begin{equation*}
   \left.\left[\left(C_{31}+\frac{\Delta t}{\Delta x}\overline{C}_{32}\phi\right)^{2}+\left(\overline{C}_{31}-
    \frac{\Delta t}{\Delta x}C_{32}\phi\right)^{2}\right]^{\frac{7}{6}}\sin\left(\theta_{1}+\frac{7}{3}\theta_{2}\right)\right\}-
   \end{equation*}
   \begin{equation}\label{E56}
    \left.\frac{\Delta t}{\Delta x}\left\{(\overline{C}_{21}-\frac{1}{2}\overline{C}_{11})\phi+\frac{1}{2}\mu^{n}\left[H_{2}-2K_{1}K_{2}
    +(K^{2}_{2}-K_{1}^{2})\left(1-\frac{\Delta t}{\Delta x}\mu^{n}\right)\phi\right]\right\}\right|\leq \sqrt{r^{*}},
   \end{equation}
   which are equivalent to
   \begin{equation*}
    1-\sqrt{1-r^{*}}\leq -\Delta t\left\{\frac{1}{2}C_{12}+C_{33}-\Gamma_{0}|Q_{j}^{n}|^{-\frac{4}{3}}(\mu^{n})^{\frac{7}{3}}
    \left[\left(1+2\Delta tC_{12}+\frac{\Delta t}{\Delta x}C_{11}\phi\right)^{2}+\left(2\Delta t\overline{C}_{12}
    +\frac{\Delta t}{\Delta x}\overline{C}_{11}\phi\right)^{2}\right]\right.
   \end{equation*}
   \begin{equation*}
   \left. \left[\left(C_{31}+\frac{\Delta t}{\Delta x}\overline{C}_{32}\phi\right)^{2}+\left(\overline{C}_{31}-
    \frac{\Delta t}{\Delta x}C_{32}\phi\right)^{2}\right]^{\frac{7}{6}}\cos\left(\theta_{1}+\frac{7}{3}\theta_{2}\right)\right\}+
   \end{equation*}
   \begin{equation}\label{E55a}
    \frac{\Delta t}{\Delta x}\left\{(C_{21}-\frac{1}{2}C_{11})\phi+\frac{1}{2}\mu^{n}\left[H_{1}-K_{1}^{2}+K^{2}_{2}+2K_{1}K_{2}\left(1-
    \frac{\Delta t}{\Delta x}\mu^{n}\right)\phi\right]\right\}\leq 1+\sqrt{1-r^{*}},
   \end{equation}
   and
    \begin{equation*}
    \left|\Delta t\left\{\frac{1}{2}\overline{C}_{12}-\overline{C}_{33}-\Gamma_{0}|Q_{j}^{n}|^{-\frac{4}{3}}(\mu^{n})^{\frac{7}{3}}
    \left[\left(1+2\Delta tC_{12}+\frac{\Delta t}{\Delta x}C_{11}\phi\right)^{2}+\left(2\Delta t\overline{C}_{12}+\frac{\Delta t}
    {\Delta x}\overline{C}_{11}\phi\right)^{2}\right]\right.\right.
   \end{equation*}
   \begin{equation*}
   \left.\left[\left(C_{31}+\frac{\Delta t}{\Delta x}\overline{C}_{32}\phi\right)^{2}+\left(\overline{C}_{31}-
    \frac{\Delta t}{\Delta x}C_{32}\phi\right)^{2}\right]^{\frac{7}{6}}\sin\left(\theta_{1}+\frac{7}{3}\theta_{2}\right)\right\}-
   \end{equation*}
   \begin{equation}\label{E56a}
    \left.\frac{\Delta t}{\Delta x}\left\{(\overline{C}_{21}-\frac{1}{2}\overline{C}_{11})\phi+\frac{1}{2}\mu^{n}\left[H_{2}-2K_{1}K_{2}
    +(K^{2}_{2}-K_{1}^{2})\left(1-\frac{\Delta t}{\Delta x}\mu^{n}\right)\phi\right]\right\}\right|\leq \sqrt{r^{*}}.
   \end{equation}
   Since we are interested in a linear stability condition, we should find a restriction satisfies by $\Delta t$ and $\Delta x$ for which
   inequalities $(\ref{E55a})$ and $(\ref{E56a})$ hold. First, we must bound each term in estimates $(\ref{E55a})$ and $(\ref{E56a}).$
   Using relations $(\ref{E9a}),$ $(\ref{E22}),$ $(\ref{E52aa}),$ $(\ref{E48b})$ and $(\ref{E22a}),$ we get
   \begin{equation*}
    \gamma_{4}=(\mu^{n})^{2};\text{\,\,\,}|C_{11}|,|\overline{C}_{11}|\leq\frac{g}{T}|A_{j}^{n}||\mu^{n}|^{-1}+\mu^{n}:=R_{n};
    \text{\,\,}|C_{12}|,|\overline{C}_{12}|\leq\frac{P\overline{\tau}}{\rho}|Q_{j}^{n}|^{-1}+\Gamma_{0}\mu^{n}|A_{j}^{n}|^{-\frac{4}{3}}:=P_{n};
   \end{equation*}
   \begin{equation*}
    |C_{31}|,|\overline{C}_{31}|\leq1;\text{\,\,\,}|C_{32}|,|\overline{C}_{32}|\leq\mu^{n};\text{\,\,\,}|C_{33}|,|\overline{C}_{33}|
    \leq\frac{P\overline{\tau}}{\rho}|Q_{j}^{n}|^{-1}\leq P_{n};
   \end{equation*}
   \begin{equation}\label{E57}
    |C_{21}|,|\overline{C}_{21}|\leq\frac{g}{2T}|Q_{j}^{n}||\mu^{n}|^{-1}:=N_{n};\text{\,\,\,}|C_{22}|,|\overline{C}_{22}|\leq
    \frac{4}{3}\Gamma_{0}\mu^{n}|A_{j}^{n}|^{-\frac{4}{3}}\leq 2P_{n}.
   \end{equation}
    From this and by simple calculations, it comes from equations $(\ref{E30}),$ $(\ref{E30a}),$ $(\ref{E41c}),$ $(\ref{E41d})$
    and $(\ref{E41e})$ that the quantities $H_{1}$, $H_{2}$, $K_{1}^{2}$, $K_{2}^{2}$ and $K_{1}K_{2}$ can be bounded as
    \begin{equation}\label{E60}
    H_{1}\leq 1+2[1+(2+R_{n}^{-1}\mu^{n})|\phi|]\max\left\{P_{n},P_{n}R_{n},P_{n}^{2},P_{n}^{2}\mu^{n}\right\}\left[\Delta t+
    \Delta t^{2}+\frac{\Delta t^{2}}{\Delta x}+\frac{\Delta t^{3}}{\Delta x}\right];
   \end{equation}
   \begin{equation}\label{E61}
    H_{2}\leq2\max\left\{P_{n},P_{n}R_{n},P_{n}^{2},P_{n}(R_{n}+\mu^{n})|\phi|,\mu^{n}P_{n}^{2}|\phi|\right\}\left[\Delta t+
    \Delta t^{2}+\frac{\Delta t^{2}}{\Delta x}+\frac{\Delta t^{3}}{\Delta x}\right];
   \end{equation}
    \begin{equation}\label{E62}
    K_{1}^{2}\leq 1+(1+4|\phi|)\max\left\{P_{n},N_{n},P_{n}N_{n},P_{n}^{2},N_{n}^{2}\right\}\left[\Delta t+
    \Delta t^{2}+\frac{\Delta t}{\Delta x}+\frac{\Delta t^{2}}{\Delta x}+\left(\frac{\Delta t}{\Delta x}\right)^{2}\right];
   \end{equation}
   \begin{equation*}
    K_{2}^{2}\leq [1+(4+R_{n}^{-1}+R_{n}^{-1}(\mu^{n})^{-1})|\phi|]\max\left\{P_{n},P_{n}R_{n}\mu^{n},P_{n}^{2},R_{n}^{2}
     \mu^{n},R_{n}^{2}(\mu^{n})^{2}\right\}
   \end{equation*}
   \begin{equation}\label{E63}
    \left[\Delta t+\Delta t^{2}+\frac{\Delta t}{\Delta x}+\frac{\Delta t^{2}}{\Delta x}+\left(\frac{\Delta t}{\Delta x}
    \right)^{2}\right];
   \end{equation}
   \begin{equation*}
    |K_{1}K_{2}|\leq|\phi|+[1+(4+N_{n}R_{n}^{-1}+(4+R_{n}^{-1}+2P_{n}+R_{n}^{-1}\mu^{n}+4N_{n}R_{n}^{-1}\mu^{n})^{-1})|\phi|]
   \end{equation*}
   \begin{equation}\label{E64}
    \max\left\{P_{n},P_{n}R_{n}\mu^{n},P_{n}^{2},R_{n}\mu^{n},R_{n}N_{n}\mu^{n}\right\}\left[\Delta t+\Delta t^{2}
    +\frac{\Delta t}{\Delta x}+\frac{\Delta t^{2}}{\Delta x}+\left(\frac{\Delta t}{\Delta x}\right)^{2}\right];
   \end{equation}
   A combination of inequalities $(\ref{E60}),$ $(\ref{E61}),$ $(\ref{E62}),$ $(\ref{E63})$ and $(\ref{E64}),$ results in
   \begin{equation*}
    |H_{1}|+K_{1}^{2}+K_{2}^{2}+2|K_{1}K_{2}|\left|1-\frac{\Delta t}{\Delta x}\mu^{n}\right||\phi|\leq2+8\left[\Delta t+\Delta t^{2}
    +\frac{\Delta t}{\Delta x}+\frac{\Delta t^{2}}{\Delta x}+\frac{\Delta t^{3}}{\Delta x}+\left(\frac{\Delta t}{\Delta x}\right)^{2}\right]
   \end{equation*}
   \begin{equation*}
     \left[1+(4+R_{n}(\mu^{n})^{-1}+R_{n}^{-1}+N_{n}R_{n}(\mu^{n})^{-1})\left(1+\frac{\Delta t}{\Delta x}\mu^{n}\right)|\phi|\right]
   \end{equation*}
   \begin{equation}\label{E65}
   \max\left\{P_{n},N_{n},R_{n}P_{n},N_{n}P_{n},R_{n}\mu^{n},P_{n}^{2},N_{n}^{2},P_{n}^{2}\mu^{n},R_{n}^{2}\mu^{n},
   R_{n}^{2}(\mu^{n})^{2},R_{n}P_{n}\mu^{n},R_{n}N_{n}\mu^{n}\right\};
   \end{equation}
   \begin{equation*}
    |H_{2}|+2|K_{1}K_{2}|+(K_{1}^{2}+K_{2}^{2})\left|1-\frac{\Delta t}{\Delta x}\mu^{n}\right||\phi|\leq \left(3+\frac{\Delta t}
    {\Delta x}\mu^{n}\right)|\phi|+6\left[\Delta t+\Delta t^{2}+\frac{\Delta t}{\Delta x}+\frac{\Delta t^{2}}{\Delta x}
    +\frac{\Delta t^{3}}{\Delta x}\right.
   \end{equation*}
   \begin{equation*}
    \left.+\left(\frac{\Delta t}{\Delta x}\right)^{2}\right]\left[1+N_{n}R_{n}(\mu^{n})^{-1}+\left(4+2P^{n}+R_{n}^{-1}+R_{n}^{-1}\mu^{n}+
    4N_{n}R_{n}^{-1}(\mu^{n})^{-1}+\frac{\Delta t}{\Delta x}\mu^{n}\right)|\phi|\right]
   \end{equation*}
   \begin{equation}\label{E66}
   \max\left\{P_{n},N_{n},N_{n}P_{n},R_{n}\mu^{n},P_{n}^{2},N_{n}^{2},R_{n}^{2}\mu^{n},P_{n}R_{n}\mu^{n},R_{n}^{2}(\mu^{n})^{2},
   R_{n}N_{n}\mu^{n}\right\}.
   \end{equation}
   But
   \begin{equation}\label{E67}
   \Gamma_{0}|Q^{n}|^{-\frac{4}{3}}(\mu^{n})^{\frac{7}{3}}=\Gamma_{0}\mu^{n}|A^{n}|^{-\frac{4}{3}}\leq P_{n};
   \end{equation}
   applying Remark $\ref{r1},$ and neglecting the terms of high order in $\phi$, it is easy to see that
   \begin{equation}\label{E68}
    \left(1+2\Delta tC_{12}+\frac{\Delta t}{\Delta x}C_{11}\phi\right)^{2}+\left(2\Delta t\overline{C}_{12}+\frac{\Delta t}
    {\Delta x}\overline{C}_{11}\phi\right)^{2}\leq1+4\Delta tP_{n}+8\Delta t^{2}P_{n}^{2}+2\frac{\Delta t}
    {\Delta x}R_{n}|\phi|+8\frac{\Delta t^{2}}{\Delta x}P_{n}R_{n}|\phi|;
   \end{equation}
   \begin{equation*}
    \left[\left(C_{31}+\frac{\Delta t}{\Delta x}\overline{C}_{32}\phi\right)^{2}+\left(\overline{C}_{31}-\frac{\Delta t}{\Delta x}
    C_{32}\phi\right)^{2}\right]^{\frac{7}{6}}\leq\left[2\left(1+\frac{\Delta t}{\Delta x}\mu^{n}|\phi|\right)^{2}\right]^{\frac{7}{6}}
    \leq 2\sqrt[6]{2}\left(1+\frac{\Delta t}{\Delta x}\mu^{n}|\phi|\right)^{\frac{7}{3}}\leq
   \end{equation*}
   \begin{equation}\label{E69}
     3\left(1+3\frac{\Delta t}{\Delta x}\mu^{n}|\phi|\right).
   \end{equation}
   We recall that the aim of this work is to find a linear (or necessary) stability condition of the numerical scheme $(\ref{67}).$
   Plugging estimates $(\ref{E65})$-$(\ref{E69}),$ by straightforward computations, a necessary condition to obtain inequalities
   $(\ref{E55a})$ and $(\ref{E56a}),$ is given by
   \begin{equation*}
    3P_{n}\Delta t\left\{\frac{1}{2}+\left[1+4\left(\Delta t+\Delta t^{2}+\frac{\Delta t}{\Delta x}+\frac{\Delta t^{2}}{\Delta x}
    +\frac{\Delta t^{2}}{\Delta x}+\left(+\frac{\Delta t^{2}}{\Delta x}\right)^{2}\right)
    \max\{P_{n},2P_{n}^{2},\frac{1}{2}R_{n}|\phi|,\mu^{n}|\phi|,P_{n}\mu^{n}|\phi|,\right.\right.
   \end{equation*}
   \begin{equation*}
    \left.\left.6P_{n}^{2}\mu^{n}|\phi|,2R_{n}P_{n}|\phi|\}\right]\right\}+\frac{\Delta t}{\Delta x}\left\{(P_{n}+\frac{1}{2}R_{n})|\phi|
    +\mu^{n}\left\{1+4\left[\Delta t+\Delta t^{2}+\frac{\Delta t}{\Delta x}+\frac{\Delta t^{2}}{\Delta x}
    +\frac{\Delta t^{3}}{\Delta x}+\left(\frac{\Delta t}{\Delta x}\right)^{2}\right]\right.\right.
   \end{equation*}
   \begin{equation*}
     \left[1+(4+R_{n}(\mu^{n})^{-1}+R_{n}^{-1}+N_{n}R_{n}(\mu^{n})^{-1})\left(1+\frac{\Delta t}{\Delta x}\mu^{n}\right)|\phi|\right]
   \end{equation*}
   \begin{equation}\label{E70}
   \left.\left.\max\left\{P_{n},N_{n},R_{n}P_{n},N_{n}P_{n},R_{n}\mu^{n},P_{n}^{2},N_{n}^{2},P_{n}^{2}\mu^{n},R_{n}^{2}\mu^{n},
   R_{n}^{2}(\mu^{n})^{2},R_{n}P_{n}\mu^{n},R_{n}N_{n}\mu^{n}\right\}\right\}\right\}\leq1+\sqrt{1-r^{*}};
   \end{equation}
   and
     \begin{equation*}
    3P_{n}\Delta t\left\{\frac{1}{2}+\left[1+4\left(\Delta t+\Delta t^{2}+\frac{\Delta t}{\Delta x}+\frac{\Delta t^{2}}{\Delta x}
    +\frac{\Delta t^{2}}{\Delta x}+\left(+\frac{\Delta t^{2}}{\Delta x}\right)^{2}\right)
    \max\{P_{n},2P_{n}^{2},\frac{1}{2}R_{n}|\phi|,\mu^{n}|\phi|,P_{n}\mu^{n}|\phi|,\right.\right.
   \end{equation*}
   \begin{equation*}
    \left.\left.6P_{n}^{2}\mu^{n}|\phi|,2R_{n}P_{n}|\phi|\}\right]\right\}+\frac{\Delta t}{\Delta x}\left\{(P_{n}+\frac{1}{2}R_{n})|\phi|
    +\frac{3\mu^{n}}{2}\left\{|\phi|+2\left[\Delta t+\Delta t^{2}+\frac{\Delta t}{\Delta x}+\frac{\Delta t^{2}}{\Delta x}
    +\frac{\Delta t^{3}}{\Delta x}\right.\right.\right.
   \end{equation*}
   \begin{equation*}
    \left.+\left(\frac{\Delta t}{\Delta x}\right)^{2}\right]\left[1+N_{n}R_{n}(\mu^{n})^{-1}+\left(4+2P^{n}+R_{n}^{-1}+R_{n}^{-1}\mu^{n}+
    4N_{n}R_{n}^{-1}(\mu^{n})^{-1}+\frac{\Delta t}{\Delta x}\mu^{n}\right)|\phi|\right]
   \end{equation*}
   \begin{equation}\label{E71}
   \left.\left.\max\left\{P_{n},N_{n},N_{n}P_{n},R_{n}\mu^{n},P_{n}^{2},N_{n}^{2},R_{n}^{2}\mu^{n},P_{n}R_{n}\mu^{n},R_{n}^{2}(\mu^{n})^{2},
   R_{n}N_{n}\mu^{n}\right\}\right\}\right\}\leq \sqrt{r^{*}}.
   \end{equation}
   Now, setting
   \begin{equation*}
    W_{1}(\Delta t,\Delta x)=\frac{1}{2}+\left[1+4\left(\Delta t+\Delta t^{2}+\frac{\Delta t}{\Delta x}+\frac{\Delta t^{2}}{\Delta x}
    +\frac{\Delta t^{2}}{\Delta x}+\left(+\frac{\Delta t^{2}}{\Delta x}\right)^{2}\right)
    \max\{P_{n},2P_{n}^{2},\frac{1}{2}R_{n}|\phi|,\mu^{n}|\phi|,\right.
   \end{equation*}
   \begin{equation}\label{E72}
    \left.P_{n}\mu^{n}|\phi|,6P_{n}^{2}\mu^{n}|\phi|,2R_{n}P_{n}|\phi|\}\right];
    \end{equation}
   \begin{equation*}
    W_{2}(\Delta t,\Delta x)=(P_{n}+\frac{1}{2}R_{n})|\phi|+\mu^{n}\left\{1+4\left[\Delta t+\Delta t^{2}+\frac{\Delta t}{\Delta x}
    +\frac{\Delta t^{2}}{\Delta x}+\frac{\Delta t^{3}}{\Delta x}+\left(\frac{\Delta t}{\Delta x}\right)^{2}\right]\right.
   \end{equation*}
   \begin{equation*}
    \left[1+(4+R_{n}(\mu^{n})^{-1}+R_{n}^{-1}+N_{n}R_{n}(\mu^{n})^{-1})\left(1+\frac{\Delta t}{\Delta x}\mu^{n}\right)|\phi|\right]
   \end{equation*}
   \begin{equation}\label{E73}
   \left.\max\left\{P_{n},N_{n},R_{n}P_{n},N_{n}P_{n},R_{n}\mu^{n},P_{n}^{2},N_{n}^{2},P_{n}^{2}\mu^{n},R_{n}^{2}\mu^{n},
   R_{n}^{2}(\mu^{n})^{2},R_{n}P_{n}\mu^{n},R_{n}N_{n}\mu^{n}\right\}\right\};
   \end{equation}
    and
   \begin{equation*}
    W_{3}(\Delta t,\Delta x)=(P_{n}+\frac{1}{2}R_{n})|\phi|+\frac{3}{2}\mu^{n}\left\{|\phi|+2\left[\Delta t+\Delta t^{2}+\frac{\Delta t}
    {\Delta x}+\frac{\Delta t^{2}}{\Delta x}+\frac{\Delta t^{3}}{\Delta x}+\left(\frac{\Delta t}{\Delta x}\right)^{2}\right]\right.
   \end{equation*}
   \begin{equation*}
    \left[1+N_{n}R_{n}(\mu^{n})^{-1}+\left(4+2P^{n}+R_{n}^{-1}+R_{n}^{-1}\mu^{n}+4N_{n}R_{n}^{-1}(\mu^{n})^{-1}+\frac{\Delta t}
    {\Delta x}\mu^{n}\right)|\phi|\right]
   \end{equation*}
   \begin{equation}\label{E74}
   \left.\max\left\{P_{n},N_{n},N_{n}P_{n},R_{n}\mu^{n},P_{n}^{2},N_{n}^{2},R_{n}^{2}\mu^{n},P_{n}R_{n}\mu^{n},R_{n}^{2}(\mu^{n})^{2},
   R_{n}N_{n}\mu^{n}\right\}\right\}.
   \end{equation}
    This fact, together with inequalities $(\ref{E70})$ and $(\ref{E71})$ provide
   \begin{equation*}
    3P_{n}\Delta t W_{1}(\Delta t,\Delta x)+\frac{\Delta t}{\Delta x}W_{2}(\Delta t,\Delta x)\leq1+\sqrt{1-r^{*}}\text{\,\,\,and\,\,\,}
     3P_{n}\Delta tW_{1}(\Delta t,\Delta x)+\frac{\Delta t}{\Delta x}W_{3}(\Delta t,\Delta x)\leq \sqrt{r^{*}},
   \end{equation*}
   which are equivalent to
   \begin{equation}\label{E75}
    \Delta t\left(3P_{n}W_{1}(\Delta t,\Delta x)+\frac{1}{\Delta x}\max\left\{W_{2}(\Delta t,\Delta x);
    W_{3}(\Delta t,\Delta x)\right\}\right)\leq\max\left\{1+\sqrt{1-r^{*}};\sqrt{r^{*}}\right\},
   \end{equation}
    where $r^{*}\in(0;1),$ $W_{1}(\Delta t,\Delta x),$ $W_{2}(\Delta t,\Delta x)$ and $W_{3}(\Delta t,\Delta x),$  are given by
    relations $(\ref{E72})$, $(\ref{E73})$ and $(\ref{E74}),$ respectively. Estimate $(\ref{E75})$ comes from the inequality:
    $\max\{a+dx,a+dy\}=a+d\max\{x,y\},$ whenever the numbers $a$, $d$, $x$ and $y$ are nonnegative. Furthermore, Estimate $(\ref{E75})$
    represents a necessary condition of stability for the numerical scheme $(\ref{67}).$ This ends the proof of Proposition $\ref{p3}$.
    \end{proof}

    Using the above results (namely, Propositions $\ref{p2}$ and $\ref{p3}$) we are ready to give a necessary stability constraint
    of the MacCormack method $(\ref{64})$-$(\ref{67})$ and to compare it with what is available in the literature (for example,
    Courant-Friedrich-Lewy condition for linear hyperbolic partial differential equations).

    \begin{theorem}\label{t1}
   The MacCormack scheme for $1$D complete shallow water equations with source terms $(\ref{2})$ is stable if
    \begin{equation*}
    \frac{\Delta t^{4}}{\Delta x^{2}}\left(3P_{n}W_{1}(\Delta t,\Delta x)+\frac{1}{\Delta x}\max\left\{W_{2}(\Delta t,\Delta x);
    W_{3}(\Delta t,\Delta x)\right\}\right)\left(1+\frac{2\Delta t}{3}\Gamma_{0}\mu^{n}|A^{n}|^{-\frac{4}{3}}\right)\leq
    \end{equation*}
    \begin{equation}\label{101}
    3\max\left\{1+\sqrt{1-r^{*}};\sqrt{r^{*}}\right\}\Gamma_{0}^{-1}(\mu^{n})^{-3}|A^{n}|^{\frac{4}{3}}|\phi|^{-2},
    \end{equation}
   with the requirement: $|\phi|=|k\Delta x|<<\pi.$ In relations $(\ref{101}):$ $e^{a_{1}t}=|A|,$ $e^{b_{1}t}=|Q|,$
    $\mu=\left|\frac{Q}{A}\right|,$
   $r^{*}\in(0;1),$ $\Gamma_{0}=\frac{gn_{1}^{2}}{1.49^{2}}P^{\frac{4}{3}}$, $W_{1}(\Delta t,\Delta x),$ $W_{2}(\Delta t,\Delta x)$
   and $W_{3}(\Delta t,\Delta x),$ are given by relations $(\ref{E72})$, $(\ref{E73})$ and $(\ref{E74}),$ respectively.
   \end{theorem}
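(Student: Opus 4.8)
The plan is to establish the sufficiency asserted in $(\ref{101})$ by reducing the stability of the full vector scheme $(\ref{64})$--$(\ref{67})$ for system $(\ref{2})$ to the two scalar stability statements already proved, and then to show that $(\ref{101})$ guarantees both of them. Under the Von Neumann analysis used throughout, the Fourier mode $W_{j}^{n}=(A_{j}^{n},Q_{j}^{n})^{T}$ with $A_{j}^{n}=e^{at^{n}}e^{ikx_{j}}$ and $Q_{j}^{n}=e^{bt^{n}}e^{ikx_{j}}$ evolves through the two amplification factors $e^{a\Delta t}$ and $e^{b\Delta t}$, so the coupled method is stable exactly when $|e^{a\Delta t}|\leq 1$ and $|e^{b\Delta t}|\leq 1$ simultaneously. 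Proposition $\ref{p2}$ gives $|e^{a\Delta t}|\leq 1$ under estimate $(\ref{11e})$ (the update $(\ref{66})$ for $A$), and Proposition $\ref{p3}$ gives $|e^{b\Delta t}|\leq 1$ under its own constraint with $W_{1},W_{2},W_{3}$ from $(\ref{E72})$--$(\ref{E74})$ (the update $(\ref{67})$ for $Q$). Hence it suffices to show that $(\ref{101})$ implies both of these scalar conditions.

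Writing $P_{A}$ and $Q_{A}$ for the two sides of $(\ref{11e})$, and $P_{Q}=\Delta t\bigl(3P_{n}W_{1}+\tfrac{1}{\Delta x}\max\{W_{2};W_{3}\}\bigr)$, $Q_{Q}=\max\{1+\sqrt{1-r^{*}};\sqrt{r^{*}}\}$ for the two sides of the constraint in Proposition $\ref{p3}$, I first record the algebraic identity underlying $(\ref{101})$: since $\frac{\Delta t^{4}}{\Delta x^{2}}=\Delta t\cdot\frac{\Delta t^{3}}{\Delta x^{2}}$, the left-hand side of $(\ref{101})$ equals the product $P_{A}P_{Q}$, while the constant $3$ together with $\Gamma_{0}^{-1}(\mu^{n})^{-3}|A^{n}|^{4/3}|\phi|^{-2}$ reproduces $Q_{A}$, so its right-hand side equals $Q_{A}Q_{Q}$. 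Thus $(\ref{101})$ is precisely the multiplicative combination $P_{A}P_{Q}\leq Q_{A}Q_{Q}$ of the two scalar constraints, which is the form in which the two amplification requirements of $(\ref{66})$ and $(\ref{67})$ are most naturally merged into a single inequality.

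The decisive---and genuinely delicate---step is to pass from the product bound $P_{A}P_{Q}\leq Q_{A}Q_{Q}$ back to the two factor bounds $P_{A}\leq Q_{A}$ and $P_{Q}\leq Q_{Q}$ required to invoke Propositions $\ref{p2}$ and $\ref{p3}$, since a bare product inequality does not split into its factors in general; I expect this to be the main obstacle of the proof. The route I would pursue is to show that, in the operating regime $0<\Delta t<\Delta x<1$ and $|\phi|=|k\Delta x|\ll\pi$, the two ratios $P_{A}/Q_{A}$ and $P_{Q}/Q_{Q}$ are comparable, so that their product being at most one forces each to be at most one; concretely, one would track the leading $O(\Delta t^{3}/\Delta x^{2})$ behaviour of $P_{A}/Q_{A}$ against the leading $O(\Delta t/\Delta x)$ behaviour of $P_{Q}/Q_{Q}$, using the coefficient bounds $(\ref{E57})$ and the hypothesis $r\ll A,Q$ to control the lower-order terms uniformly in the mesh parameters. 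If such comparability cannot be secured without additional hypotheses, the honest reading is that $(\ref{101})$ must be understood as the combined restriction whose satisfaction, together with the separate monitoring of each amplification factor furnished by Propositions $\ref{p2}$ and $\ref{p3}$, certifies $|e^{a\Delta t}|\leq1$ and $|e^{b\Delta t}|\leq1$. Under either justification both scalar schemes are stable, hence so is the MacCormack method $(\ref{64})$--$(\ref{67})$ for $(\ref{2})$, which completes the proof.
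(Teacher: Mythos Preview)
Your approach is exactly the paper's: its entire proof of Theorem~\ref{t1} is the single sentence ``The proof of Theorem~$\ref{t1}$ is obvious according to Propositions~$\ref{p2}$ and~$\ref{p3}$.'' You have correctly recognised that $(\ref{101})$ is, term for term, the product of the left-hand sides of $(\ref{11e})$ and the constraint in Proposition~\ref{p3}, bounded by the product of their right-hand sides, and that the intended logic is simply to invoke both propositions.

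The logical wrinkle you flag---that $P_{A}P_{Q}\leq Q_{A}Q_{Q}$ does not in general force $P_{A}\leq Q_{A}$ and $P_{Q}\leq Q_{Q}$ separately---is genuine, and the paper does not address it at all. The paper implicitly treats $(\ref{101})$ as a \emph{necessary} combined restriction (the product of two necessary conditions is again necessary, which is the direction that \emph{does} follow trivially), consistent with its repeated description of the result as ``a necessary condition of stability'' obtained from a local linearised analysis; it never actually argues the sufficiency direction that the word ``if'' in the theorem statement would seem to require. So the extra work you sketch (comparing the orders of $P_{A}/Q_{A}$ and $P_{Q}/Q_{Q}$ in the regime $0<\Delta t<\Delta x<1$, $|\phi|\ll\pi$) goes beyond what the paper supplies. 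In short: you have reproduced the paper's argument and been more scrupulous than the paper about its limitations; there is nothing further to extract from the source.
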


    \begin{proof}
    The proof of Theorem $\ref{t1}$ is obvious according to Propositions $\ref{p2}$ and $\ref{p3}.$
    \end{proof}

    The Von Neumann stability approach, based on a Fourier analysis in the space domain has been developed for nonlinear one-dimensional
    complete shallow water equations with source terms. Although the stability condition has not be derived analytically, we have analyzed
    the properties of amplification factor numerically (by use of Taylor series expansion), which contain information on the dispersion
    and diffusion errors of the considered numerical scheme. It is worth noticing that we used a local, linearized stability analysis to
    obtain estimate $(\ref{101}),$ which must be considered as a necessary condition of stability for the numerical scheme
    $(\ref{66})$-$(\ref{67}).$

    \subsection*{Some important remarks on stability analysis}
    This section considers some useful remarks on the stability restrictions obtained in this note and compares it with
    what is known in the literature (Courant-Friedrichs-Lewy condition).

   \begin{enumerate}
   \item The stability restrictions $(\ref{101})$ suggests that a small space step $\Delta x$ forces the time step
   $\Delta t$ to be more potentially small. This makes the MacCormack scheme extremely slow. However, because consistency requires that
   $\frac{\Delta t^{n}}{\Delta x}$ $(n\geq1)$ approached zero as $\Delta t$ and $\Delta x$ approach zero, a much smaller time step than
    allowed by the stability condition $(\ref{101})$ is implied. For this reason, the MacCormack method seems better suitable for the
    calculation of steady solutions (where time accuracy is unimportant) than for for the unsteady solutions (for example, see \cite{pv}
    for analysis of numerical solutions for time dependent PDEs).\\

   \item
   The MacCormack approach $(\ref{64})$-$(\ref{67})$ for $1$D complete surface water equations has a stability
   limitation $(\ref{101})$ that limits the maximum time step. This stability requirement does not coincide
   with the Courant-Friedrichs-Lewy (CFL) condition obtained for linear hyperbolic partial differential equations (for example:
   linear advection equation, wave equation, linearized burgers equations, etc...) because
   the MacCormack method is applied to complex time dependent partial differential equations. As discussion on the stability
   restrictions one can refer to the stability analysis of the two-step Lax-Wendroff method and the MacCormack scheme
   applied to complete burgers equations (for example, see \cite{3nnnn}, P. $245$-$247$). The linear stability condition $(\ref{101})$
   is highly unusual. Since we normally find this condition from a Fourier stability analysis, it follows from inequalities $(\ref{101})$
   that an instability occurs when $|\Delta t|$ is greater than some $|\Delta t|_{\max}$ which can be viewed as $(\Delta t)_{CFL}.$
   As observed in proving Proposition $\ref{p3},$ it was extremely difficult to obtain the stability criterion for our numerical method.
   However, it comes from condition given by relation $(\ref{101})$ that the empirical formula
   \begin{equation*}
    \frac{\Delta t^{4}}{\Delta x^{2}}\left(3|A^{n}|^{\frac{4}{3}}+2\Delta t\Gamma_{0}\mu^{n}\right)
    \leq9\max\left\{1+\sqrt{1-r^{*}};\sqrt{r^{*}}\right\}\Gamma_{0}^{-1}(\mu^{n})^{-3}|A^{n}|^{\frac{8}{3}}
    |\phi|^{-2},
   \end{equation*}
    can be used with an appropriate safety factor. It should be remembered that the "heuristic" stability analysis, i.e., estimates $(\ref{101})$
    can only provide a necessary condition for stability. Thus, for some finite difference algorithms, only partial information about the complete
    stability bound is obtained and for others (such as algorithms for the heat equation, wave equation and linearized Burgers equations)
    a more complete theory must be employed.
   \end{enumerate}

   \section{Numerical experiments}\label{ne}
   This section simulates the MacCormack scheme described in section \ref{sas} for $1$D complete shallow water equations
   with source terms. We focus on a practical application of a shallow water flow based on the Benou\'{e} river. This river
   is a $7000m$ long reach of the upstream part (altitude=$174.22$ m) and it is located in Cameroon. Being a mountain river,
   it is characterized by strong irregularities in the cross section, by a rather steep part in the first kilometers and by
   a low base discharge $(708m^{3}/s)$ which, altogether, produce a high velocity basic flow, transcritical in some parts.
   More specifically, we consider the problem of floods observed in this river in $2012$ because it is a classical example of
   time dependent nonlinear flow with shocks to expect floods and to test conservation in numerical schemes. Furthermore, we assume
   that this problem is generated by the $1$D complete shallow water equations with source terms for the \textbf{ideal case}
   of a flat and frictionless channel with prismatic cross section, i.e., constants top width ($T=348$ m) and wetted perimeter
   ($P=366,4$ m). We also use the initial data given by relation $(\ref{18f}).$\\

   Before describing the analytical solution considered in this note, we first approximate the $L^{2}$-norm of the space
   $L^{2}(0,T_{1};L^{2}(0,L))$ by a full discrete norm which plays a crucial role in the analysis of the error estimates together
   with the convergence rate of our method. Let $w\in L^{2}(0,T_{1};L^{2}(0,L)),$ we have that
            \begin{eqnarray*}
              \left[\int_{0}^{T_{1}}\int_{0}^{L}|w(t,x)|^{2}dxdt\right]^{1/2} &\approx&
               \left[\underset{n=0}{\overset{N}\sum}\int_{t^{n}}^{t^{n+1}}\int_{0}^{L}|w(t,x)|^{2}dxdt\right]^{1/2}  \\
                &\approx& \left[\Delta t\underset{n=0}{\overset{N}\sum}\int_{0}^{L}|w(t^{n},x)|^{2}dx\right]^{1/2} \\
                &\approx& \left[\Delta t\underset{n=0}{\overset{N}\sum}\underset{j=0}{\overset{M}\sum}
                \int_{x_{j}}^{x_{j+1}}|w(t^{n},x)|^{2}dx\right]^{1/2} \\
                &\approx& \left[\Delta t\cdot\Delta x\underset{n=0}{\overset{N}\sum}\underset{j=0}{\overset{M}\sum}
                |w(t^{n},x_{j})|^{2}\right]^{1/2}.
            \end{eqnarray*}
           Using this, we introduce the following fully discrete norm
            \begin{equation}\label{27l}
            \||w|\|_{L^{2}(0,T_{1};L^{2}(0,L))}=\left[\Delta t\cdot\Delta x\underset{n=0}{\overset{N}\sum}\underset{j=0}
             {\overset{M}\sum}|w(t^{n},x_{j})|^{2}\right]^{1/2}.
            \end{equation}
            Denoting by $w_{j}^{n}=(A_{j}^{n},Q_{j}^{n})$ the value of the approximate solution at time $t^{n}$ and point
             $x_{j}$ obtained with the MacCormack scheme and by $w(t^{n},x_{j})=\left(A(t^{n},x_{j}),Q(t^{n},x_{j})\right)$ the
             value of the analytical solution at $(t^{n},x_{j}),$ the exact error at time $t^{n}$ and point $x_{j}$ is defined by
            $e(t^{n},x_{j})=w(t^{n},x_{j})-w_{j}^{n}=\left(A(t^{n},x_{j})-A_{j}^{n},Q(t^{n},x_{j})-Q_{j}^{n}\right).$ Thus the
            errors-norm are given by
            \begin{equation}\label{28l}
            \||e_{A}|\|_{L^{2}(0,T_{1};L^{2}(0,L))}=\left[\Delta t\cdot\Delta x\underset{n=0}{\overset{N}\sum}\underset{j=0}
             {\overset{M}\sum}|A(t^{n},x_{j})-A_{j}^{n}|^{2}\right]^{1/2},
            \end{equation}
             and
            \begin{equation}\label{29l}
            \||e_{Q}|\|_{L^{2}(0,T_{1};L^{2}(0,L))}=\left[\Delta t\cdot\Delta x\underset{n=0}{\overset{N}\sum}\underset{j=0}
             {\overset{M}\sum}|Q(t^{n},x_{j})-Q_{j}^{n}|^{2}\right]^{1/2}.
            \end{equation}

   The exact solution considered in this paper is due to Dressler's dam break with friction \cite{swashes}. In the literature
   different approaches are presented and deeply studied for this case. Dressler's analyzed Ch\'{e}zy friction law and has used a
   perturbation scheme in the Ritter's method, i.e., both velocity $(u)$ and height $(h)$ of the water are expanded as power series in
   the friction coefficient $C_{f}=1/C^{2}.$ We consider the initial conditions defined as
        \begin{equation}\label{18f}
        h(0,x)=h^{0}(x)=\left\{
                          \begin{array}{ll}
                            h_{l}>0, & \hbox{for $0\leq x\leq x_{0}$;} \\
                            \text{\,}\\
                            0, & \hbox{for $x_{0}< x\leq L$,}
                          \end{array}
                        \right.\text{\,\,\,\,}u(0,x)=u^{0}(x)=\left\{
                          \begin{array}{ll}
                            10^{-1}, & \hbox{for $0\leq x\leq x_{0}$;} \\
                            \text{\,}\\
                            0, & \hbox{for $x_{0}< x\leq L$.}
                          \end{array}
                        \right.
        \end{equation}
        We assume that $C=40m^{1/2}/s$ (Ch\'{e}zy coefficient), $h_{l}=5\times 10^{-3}$m, $x_{0}=L/2,$ $T_{1}=1$s, and $L=1$m.
        Dressler's first order developments for the flow resistance give the following corrected height and velocity
        \begin{equation}\label{19f}
         \left\{
           \begin{array}{ll}
             h_{c}(t,x)=\frac{1}{g}\left(\frac{2}{3}\sqrt{gh_{l}}-\frac{x-x_{0}}{3t}+\frac{g^{2}}{C^{2}}\alpha_{1}t\right)^{2}, & \hbox{} \\
             \text{\,}\\
             u_{c}(t,x)=\frac{2}{3}\sqrt{gh_{l}}+\frac{2(x-x_{0})}{3t}+\frac{g^{2}}{C^{2}}\alpha_{2}t, & \hbox{}
           \end{array}
         \right.
        \end{equation}
       where
       \begin{equation}\label{19af}
        \alpha_{1}=\frac{6}{5\left(2-\frac{x-x_{0}}{t\sqrt{gh_{l}}}\right)}-\frac{2}{3}+\frac{4\sqrt{3}}{135}
       \left(2-\frac{x-x_{0}}{t\sqrt{gh_{l}}}\right)^{3/2},
       \end{equation}
       and
       \begin{equation}\label{19bf}
        \alpha_{2}=\frac{12}{2-\frac{x-x_{0}}{t\sqrt{gh_{l}}}}-\frac{8}{3}+\frac{8\sqrt{3}}{189}
       \left(2-\frac{x-x_{0}}{t\sqrt{gh_{l}}}\right)^{3/2}-\frac{108}{7\left(2-\frac{x-x_{0}}{t\sqrt{gh_{l}}}\right)^{2}}.
       \end{equation}
       Following the Dressler's approach we consider four regions: from upstream to downstream (a steady state region
       $(h_{l},10^{-1})$ for $x\leq x_{1}(t)$);
       a corrected region ($(h_{c},u_{c})$ for $x_{1}(t)\leq x\leq x_{2}(t)$); the tip region (for $x_{2}(t)\leq x\leq x_{3}(t)$) and the
       dry region ($(0,0)$ for $x_{3}(t)\leq x\leq L$). In the tip region, friction term is preponderant thus $(\ref{19f})$ is no more valid.
       In the corrected region, the velocity increases with $x.$ Dressler assumed that at $x_{2}(t)$ the velocity reaches the maximum of
       $u_{c}$ and that the velocity is constant in space in the tip region
       \begin{equation}\label{20f}
        u_{tip}(t)=\underset{x\in[x_{2}(t),x_{3}(t)]}{\max}u_{c}(t,x).
       \end{equation}
       Utilizing these assumptions together with relations $(\ref{19f})$-$(\ref{20f})$, the analytic solution of $1$D
       complete shallow water equations with friction terms is then given by
       \begin{equation}\label{21f}
         h(t,x)=\left\{
                          \begin{array}{ll}
                            h_{l}, & \hbox{for $0\leq x\leq x_{1}(t)$ and $t\in(0,T_{1}]$,} \\
                           \text{\,}\\
                           \frac{1}{g}\left(\frac{2}{3}\sqrt{gh_{l}}-\frac{x-x_{0}}{3t}+\frac{g^{2}}{C^{2}}\alpha_{1}t\right)^{2},
                           & \hbox{for $x_{1}(t)\leq x\leq x_{3}(t)$ and $t\in(0,T_{1}]$,} \\
                           \text{\,}\\
                            0, & \hbox{for $x_{3}(t)\leq x\leq L$ and $t\in(0,T_{1}]$,}
                          \end{array}
                       \right.
       \end{equation}
        and
        \begin{equation}\label{22f}
         u(t,x)=\left\{
                          \begin{array}{ll}
                            0, & \hbox{for $0\leq x\leq x_{1}(t)$ and $t\in(0,T_{1}]$,} \\
                           \text{\,}\\
                           \frac{2}{3}\sqrt{gh_{l}}+\frac{2(x-x_{0})}{3t}+\frac{g^{2}}{C^{2}}\alpha_{2}t,
                           & \hbox{for $x_{1}(t)\leq x\leq x_{2}(t)$ and $t\in(0,T_{1}]$,} \\
                           \text{\,}\\
                          \underset{x\in[x_{2}(t),x_{3}(t)]}{\max}u_{c}(t,x),
                           & \hbox{for $x_{2}(t)\leq x\leq x_{3}(t)$ and $t\in(0,T_{1}]$,} \\
                           \text{\,}\\
                            0, & \hbox{for $x_{3}(t)\leq x\leq L$ and $t\in(0,T_{1}]$,}
                          \end{array}
                       \right.
       \end{equation}
         where $\alpha_{1}$ and $\alpha_{2}$ are given by equations $(\ref{19af})$ and $(\ref{19bf}),$ respectively,
       $x_{1}(t)=x_{0}-t\sqrt{gh_{l}},$ $x_{3}(t)=x_{0}+2t\sqrt{gh_{l}}$ and $x_{2}(t)\in[x_{1}(t),x_{3}(t)]$ is the point
       where the velocity $u_{c}(t,x)$ attains its maximum.\\

        With this approach, we should remark that the water height is constant in the tip zone. This is a limit of
       Dressler's approach. Thus the authors \cite{swashes} coded the second order interpolation used in
       \cite{50swashes,51swashes} and recommended by Valerio Caleffi. Even if the authors \cite{swashes} have no information
       concerning the shape of the wave tip, this case shows if the scheme is able to locate and treat correctly the wet/dry
        transition.\\

        Armed with the above tools, we are ready to provide the exact solution of the system of PDEs $(\ref{1}).$ Our analysis
       consider the case where the channel is prismatic with a constant top width $(T)$ and the average velocity $(u)$ is defined
       as $u(t,x)=Q(t,x)/A(t,x).$ Using this, the following formulas hold
       \begin{equation}\label{23f}
        A(t,x)=Th(t,x)\text{\,\,\,\,and\,\,\,\,}Q(t,x)=Th(t,x)u(t,x).
       \end{equation}
        Since the water height is constant in the tip region it comes from relation $(\ref{23f})$ that the cross section $(A)$
        and discharge $(Q)$ are not modified in that region. A combination of relations $(\ref{21f}),$ $(\ref{22f})$ and $(\ref{23f})$
        gives the explicit formulae of cross section and discharge
        \begin{equation}\label{24f}
         A(t,x)=Th(t,x)=\left\{
                          \begin{array}{ll}
                            Th_{l}, & \hbox{for $0\leq x\leq x_{1}(t)$ and $t\in(0,T_{1}]$,} \\
                           \text{\,}\\
                           \frac{T}{g}\left(\frac{2}{3}\sqrt{gh_{l}}-\frac{x-x_{0}}{3t}+\frac{g^{2}}{C^{2}}\alpha_{1}t\right)^{2},
                           & \hbox{for $x_{1}(t)\leq x\leq x_{3}(t)$ and $t\in(0,T_{1}]$,} \\
                           \text{\,}\\
                            0, & \hbox{for $x_{3}(t)\leq x\leq L$ and $t\in(0,T_{1}]$,}
                          \end{array}
                       \right.
       \end{equation}
        and
        \begin{equation}\label{25f}
        Q(t,x)=Th(t,x)u(t,x)=\left\{
                          \begin{array}{ll}
                            0, & \hbox{for $0\leq x\leq x_{1}(t)$ and $t\in(0,T_{1}]$,} \\
                           \text{\,}\\
                           Th_{c}(t,x)u_{c}(t,x), & \hbox{for $x_{1}(t)\leq x\leq x_{2}(t)$ and $t\in(0,T_{1}]$,} \\
                           \text{\,}\\
                          Th_{c}(t,x)u_{tip}(t,x), & \hbox{for $x_{2}(t)\leq x\leq x_{3}(t)$ and $t\in(0,T_{1}]$,} \\
                           \text{\,}\\
                            0, & \hbox{for $x_{3}(t)\leq x\leq L$ and $t\in(0,T_{1}]$.}
                          \end{array}
                       \right.
           \end{equation}
            The following values are considered for simulations: shear stress $\overline{\tau}=1.329N/m^{2};$ Top width $T=348m;$
            wetter perimeter $P=366,4m;$ wavelength $K_{\lambda}=2\pi\simeq6.28m;$ manning's number $n_{1}=0.025s/m^{1/3};$ the
            acceleration of gravity $g=10m/s^{2};$ the rainfall intensity is described as
            \begin{equation}\label{26f}
             I(t,x)=\left\{
                      \begin{array}{ll}
                        1.18\times10^{-5}m/s, & \hbox{if $(t,x)\in[0,T_{1}]\times[0,L]$;} \\
                        0, & \hbox{otherwise.}
                      \end{array}
                    \right.
            \end{equation}
           The mathematical model for this ideal overland flow is the following: we consider a uniform plane catchment whose overall
          length in the direction of flow is $L=1m.$ The surface roughness and shear stress are assumed invariant in space
          and time. It comes from equation $(\ref{26f})$ the constant rainfall excess is defined as
           \begin{equation}\label{26ff}
             r(t,x)=\left\{
                      \begin{array}{ll}
                        I(t,x), & \hbox{for $(t,x)\in[t_{0},T_{1}]\times[0,L]$;} \\
                        0, & \hbox{otherwise.}
                      \end{array}
                    \right.
            \end{equation}
      The mesh size $\Delta x$ takes values: $2^{-4},$ $2^{-5},$ $2^{-6}$ and $2^{-7},$ while the time step $\Delta t$
     varies in range: $2^{-7},$ $2^{-8},$ $2^{-9}$ and $2^{-10}.$ $I$ is the rainfall intensity defined
     by relation $(\ref{26f}),$ $t_{0}=0s$ and $T_{1}=1s$ are initial and final time, respectively, of the rainfall excess
     computed above, and $L=1m$ is the rod interval length. The approximate solutions given by numerical schemes $(\ref{64})$-$(\ref{67})$
     obtained from $1$ to $20$ iterations, respectively, are displayed in Figures $\ref{figure 2}$ and $\ref{figure 3}.$
     Different values of $k=\Delta t=2^{-7},2^{-8},2^{-9},2^{-10},$ numbers obtained from the stability restriction $(\ref{101})$ as the
     steady flow cases and space step $h=\Delta x=2^{-4},2^{-5},2^{-6},2^{-7},$ in the mesh are used. Before $3$ iterations
     are encountered, the discharge wave propagates with almost a perfectly constant value at different positions (see Figures $\ref{figure 2}$
     and $\ref{figure 3}$). Furthermore, after $3$ iterations, the discharge wave also tends to zero at different times
     (Figures $\ref{figure 2}$ and $\ref{figure 3}$). So, the graphs show that the solution of the difference equations cannot grow
     with time and so must still satisfy the Von Neumann necessary condition. We obtain similar observations for the cross section.
     In addition, \textbf{Table 1} suggests that the errors associated with the cross section is of second order accurate while the approximate
     solution corresponding to the discharge coincides with the exact one. This suggests that for the considered analytical
     solution the MacCormack approach $(\ref{64})$-$(\ref{67})$ converges with second order accuracy. Furthermore, the graphs
     indicate that the numerical solutions start to destroy after a fixed time. Specifically, combining the different values of $\Delta x$
     and $\Delta t,$ we observe from the graphs that that good approximate solutions are obtained for small mesh sizes $\Delta x$ and time steps
     $\Delta t$ satisfying the stability restriction $(\ref{101})$. Thus, physical insight must be used when the stability limitation
     $(\ref{101})$ of the MacCormack method is investigated. Finally, both \textbf{Table 1} and Figures $\ref{figure 2}$ and $\ref{figure 3}$
     show that the numerical solutions do not increase with time and converge to the analytical one. More specifically, they indicate that
     stability for the MacCormack scheme is subtle. It is not unconditionally unstable, but stability depends on the parameters $\Delta x$
     and $\Delta t.$
       \text{\,\,}\\
        \text{\,\,}\\
       \text{\,\,}\\

               \textbf{Table 1.}$\label{t1}$ Analyzing of convergence rate $O(h^{\theta}+\Delta t^{\beta})$ for MacCormack scheme by $r_{(\cdot)},$
           with varying spacing $h=\Delta x$ and time step $k=\Delta t$.
            \begin{equation*}
            \begin{tabular}{|c|c|c|c|c|}
            \hline
            $(k,h)$ & $\|A-A_{1}\|_{L^{2}}$&$\|Q-Q_{1}\|_{L^{2}}$ & $r_{(A)}$ & $r_{(Q)}$ \\
            \hline
            $(2^{-7},2^{-4})$ & 0.0384 & 0  &          &   --      \\
            \hline
            $(2^{-8},2^{-5})$ & 0.0192 & 0  & 2.0000   &   --       \\
            \hline
            $(2^{-9},2^{-6})$ & 0.0093 & 0  & 2.0645   &    --      \\
            \hline
            $(2^{-10},2^{-7})$& 0.0047 & 0  & 1.9787   & -- \\
            \hline
          \end{tabular}
            \end{equation*}
           \text{\,}\\

       \section{General conclusions and future works}\label{cfw}
        In this paper we have presented a full description of the MacCormack approach for complete shallow water equations with
        source terms, have studied in details the stability analysis of the numerical scheme and we have provided the convergence
        rate of the method (which is computed numerically). The graphs (Figures $\ref{figure 2}$ and $\ref{figure 3}$) show that
        the considered method is both stable and convergent while \textbf{Table 1} indicates the rate of convergence (second order)
        of the algorithm. After a few number of iterations, the figures suggest that the numerical solutions strongly converge to the
        analytical one. This is not a surprise since the exact solutions are discontinuity and tend to zero whenever the time $t$ is
        different from zero. From Figures $\ref{figure 2}$ and $\ref{figure 3}$, one should observe that the only case where the exact
        solutions are not close to zero corresponds to the initial condition. This comes from assumptions made by Dressler when he constructs
        the analytical solutions. Our future investigations will consist to find a good analytical solution for the proposed problem and to extend
        the analysis to open channel flows.\\

     \textbf{Acknowledgment.} The authors thank the anonymous referees for detailed and valuable comments which have helped to greatly improve
      the quality of this paper.

          \begin{figure}
         \begin{center}
          Graphs of cross section and discharge for shallow water flow.
         \begin{tabular}{c c}
         \psfig{file=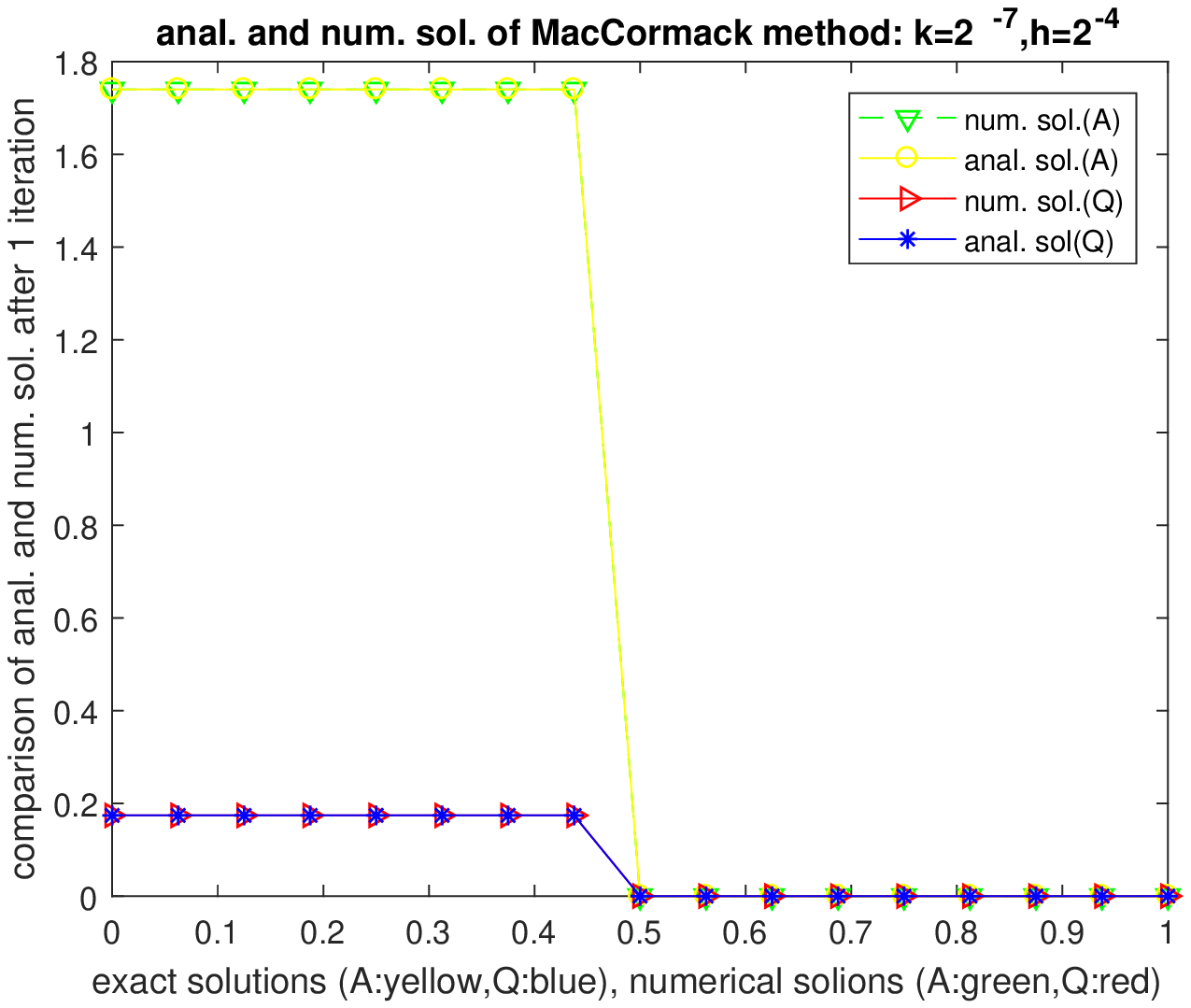,width=7cm} & \psfig{file=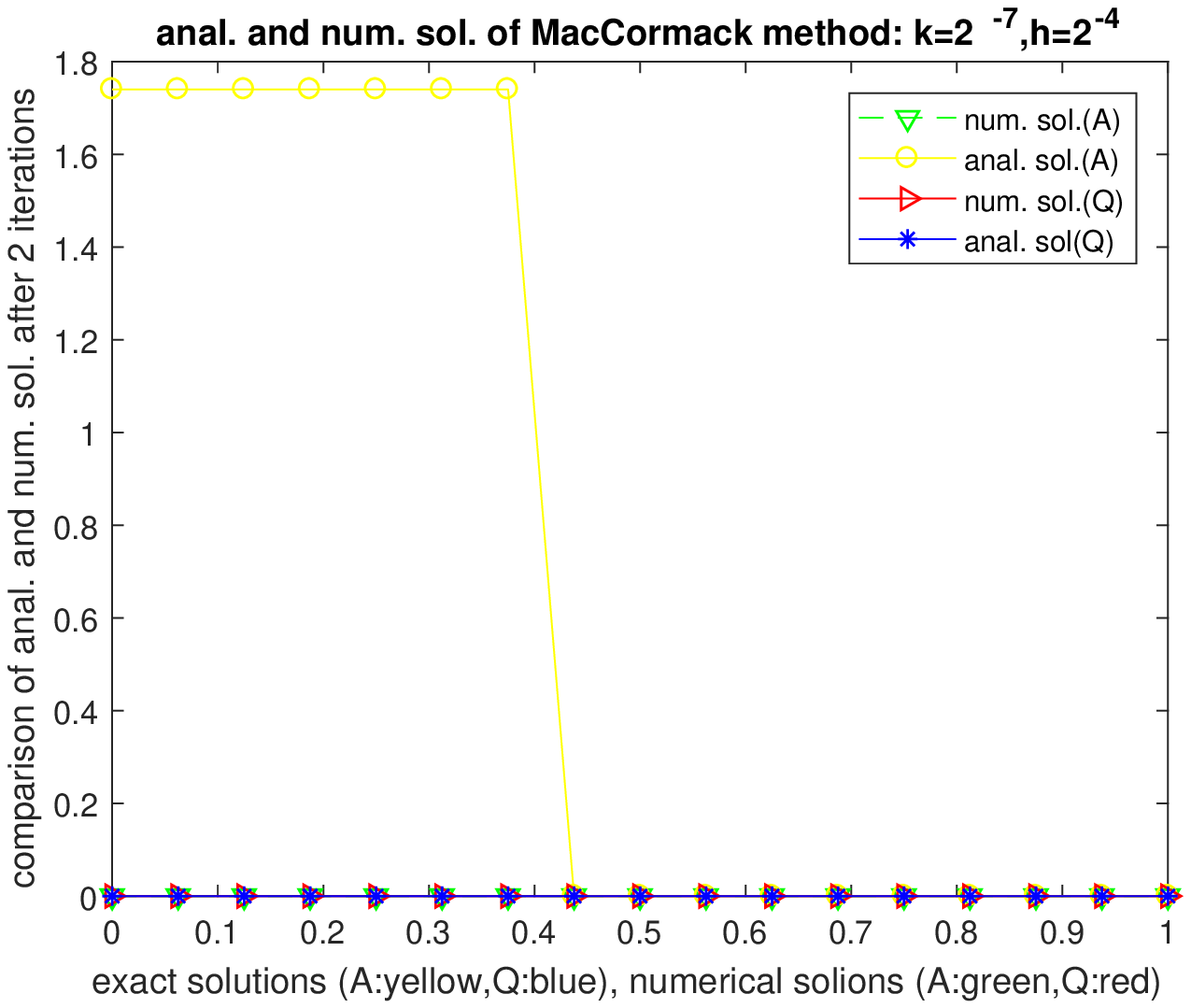,width=7cm}\\
         \psfig{file=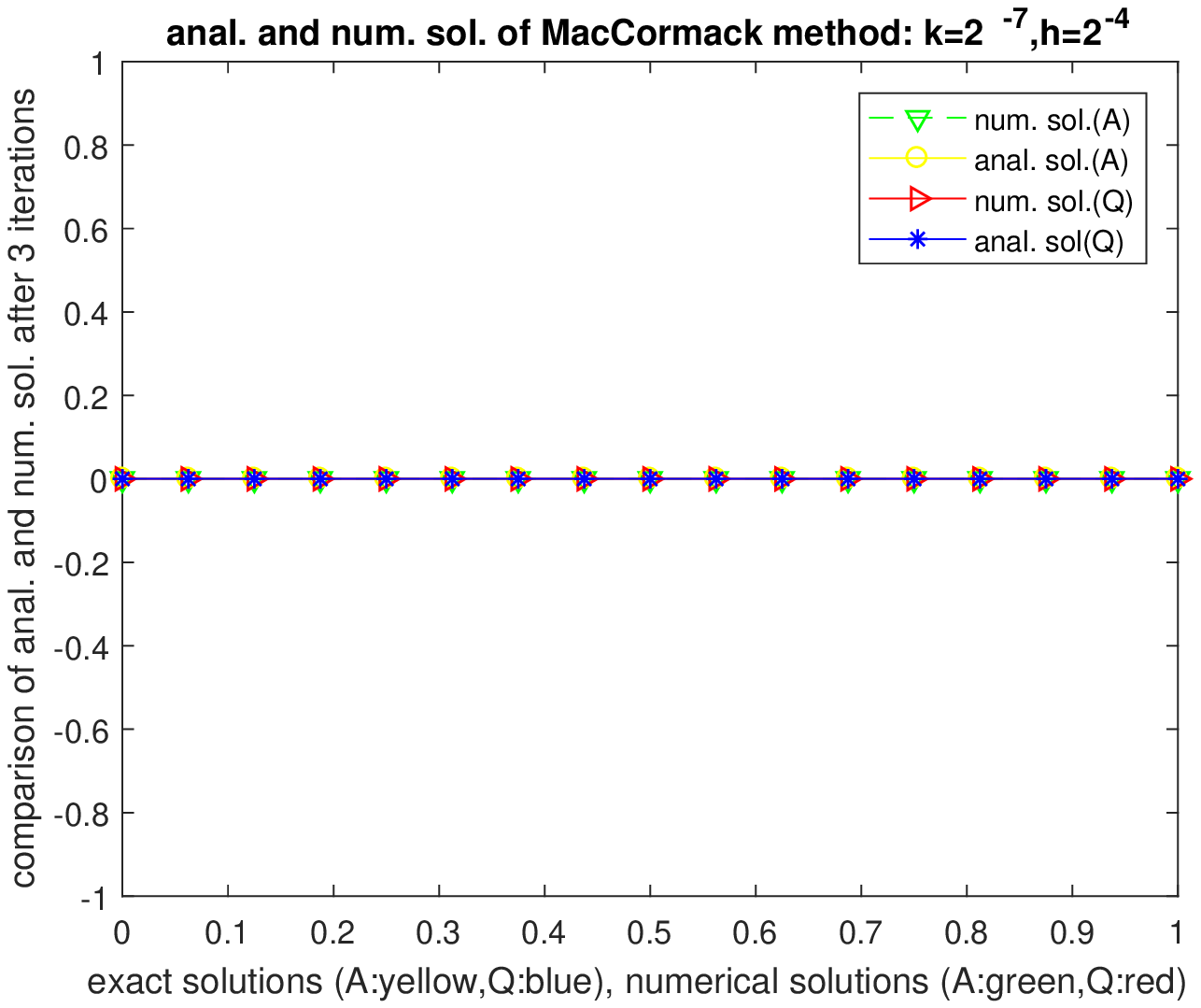,width=7cm} & \psfig{file=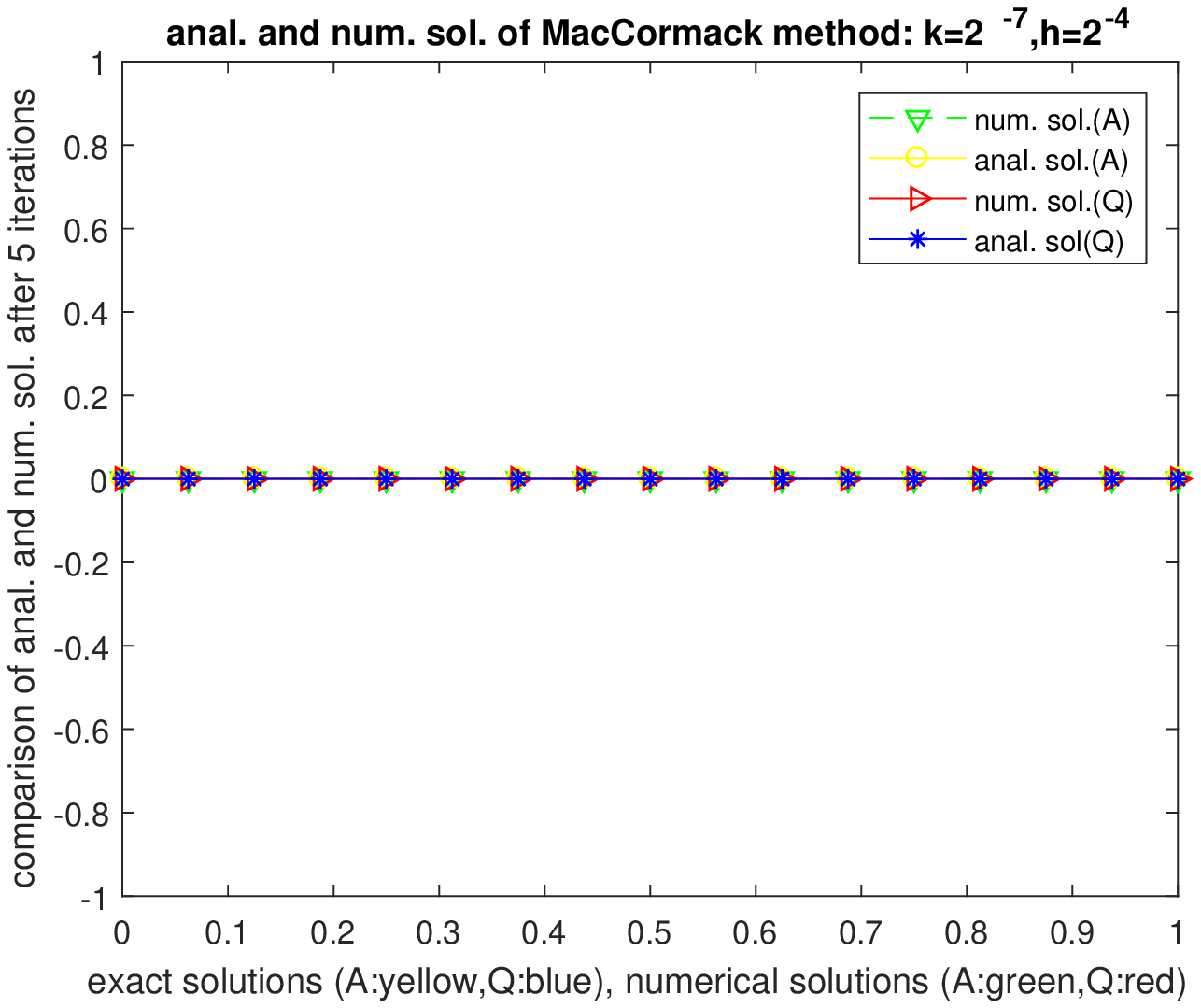,width=7cm}\\
         \psfig{file=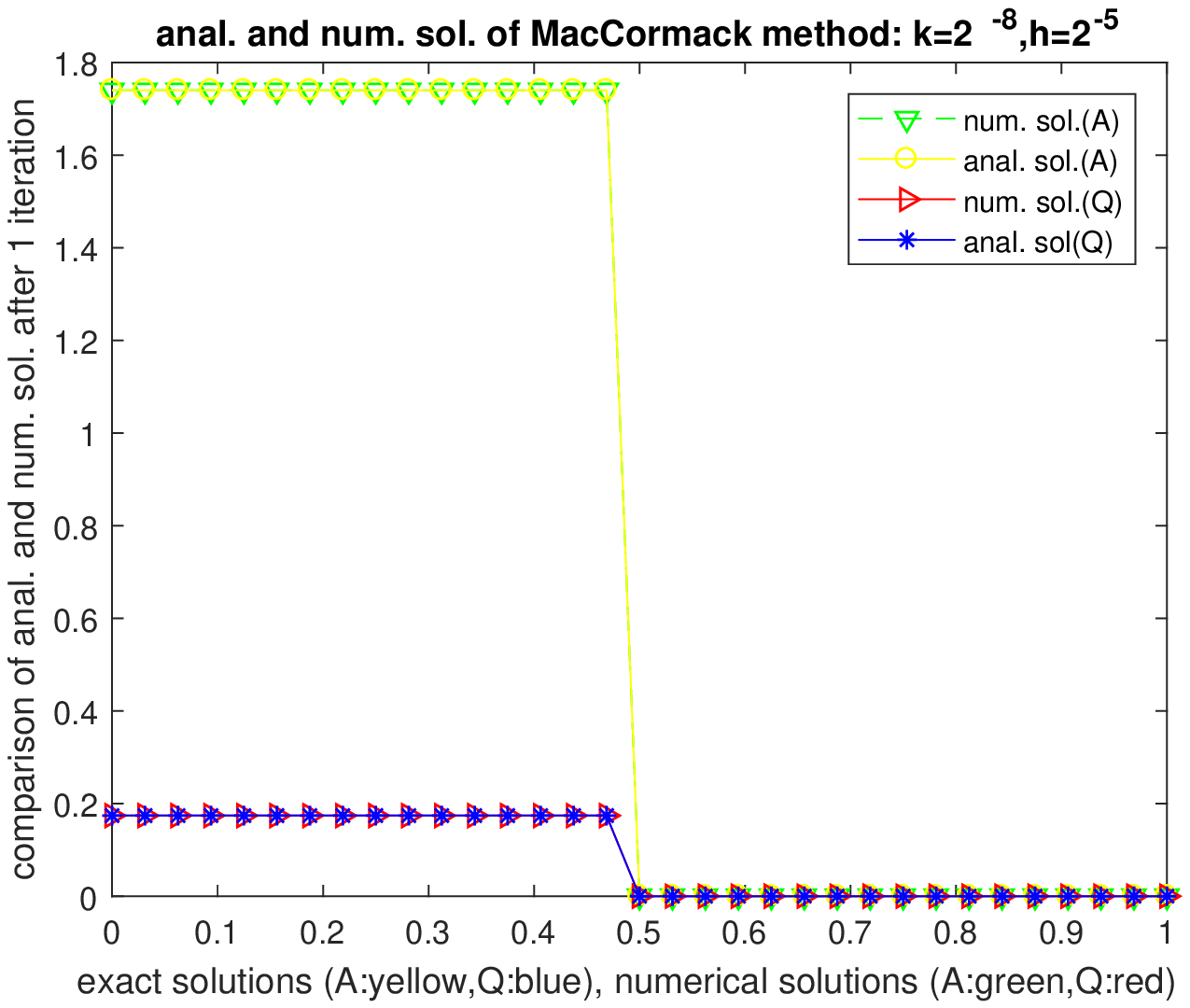,width=7cm} & \psfig{file=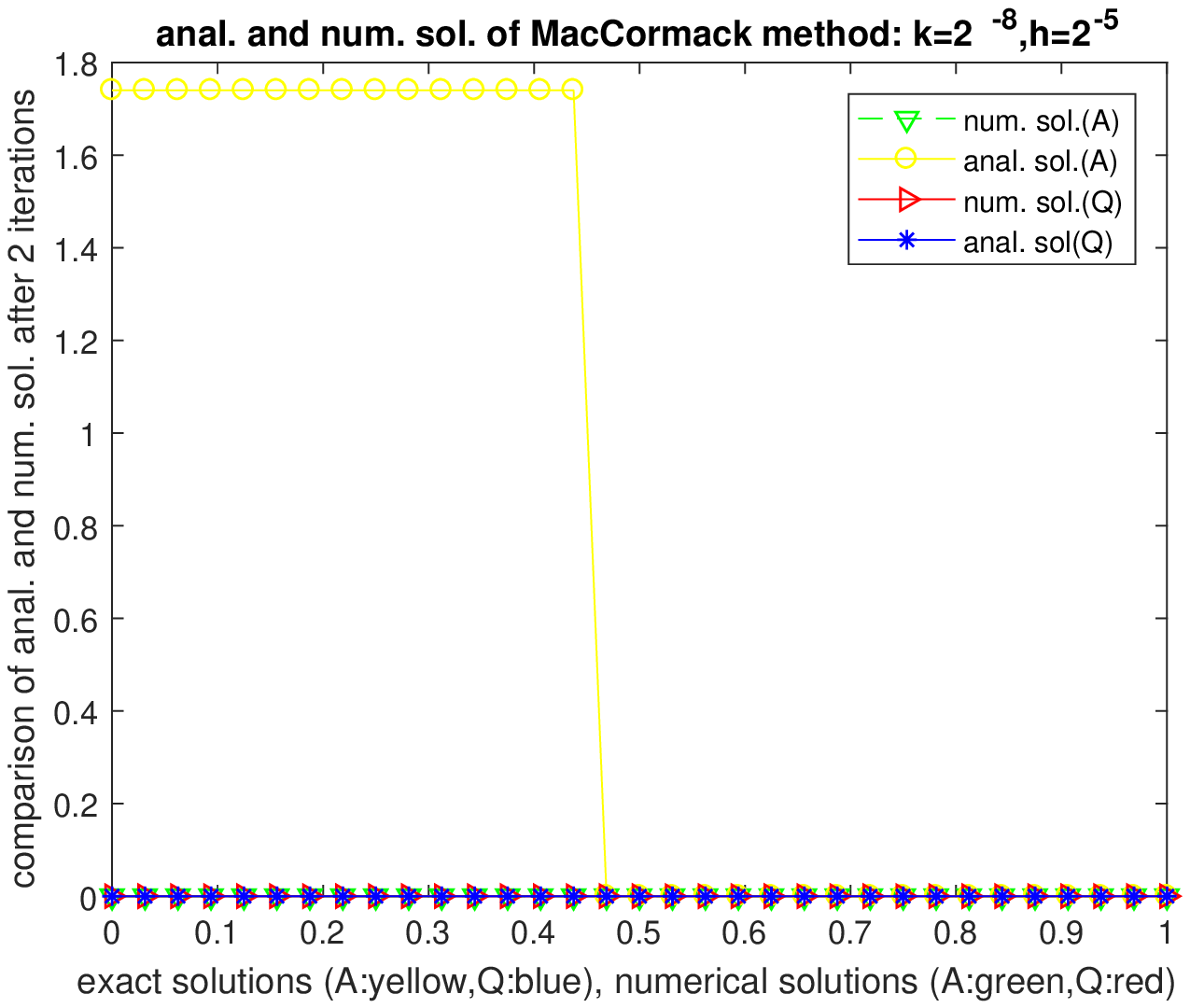,width=7cm}\\
         \psfig{file=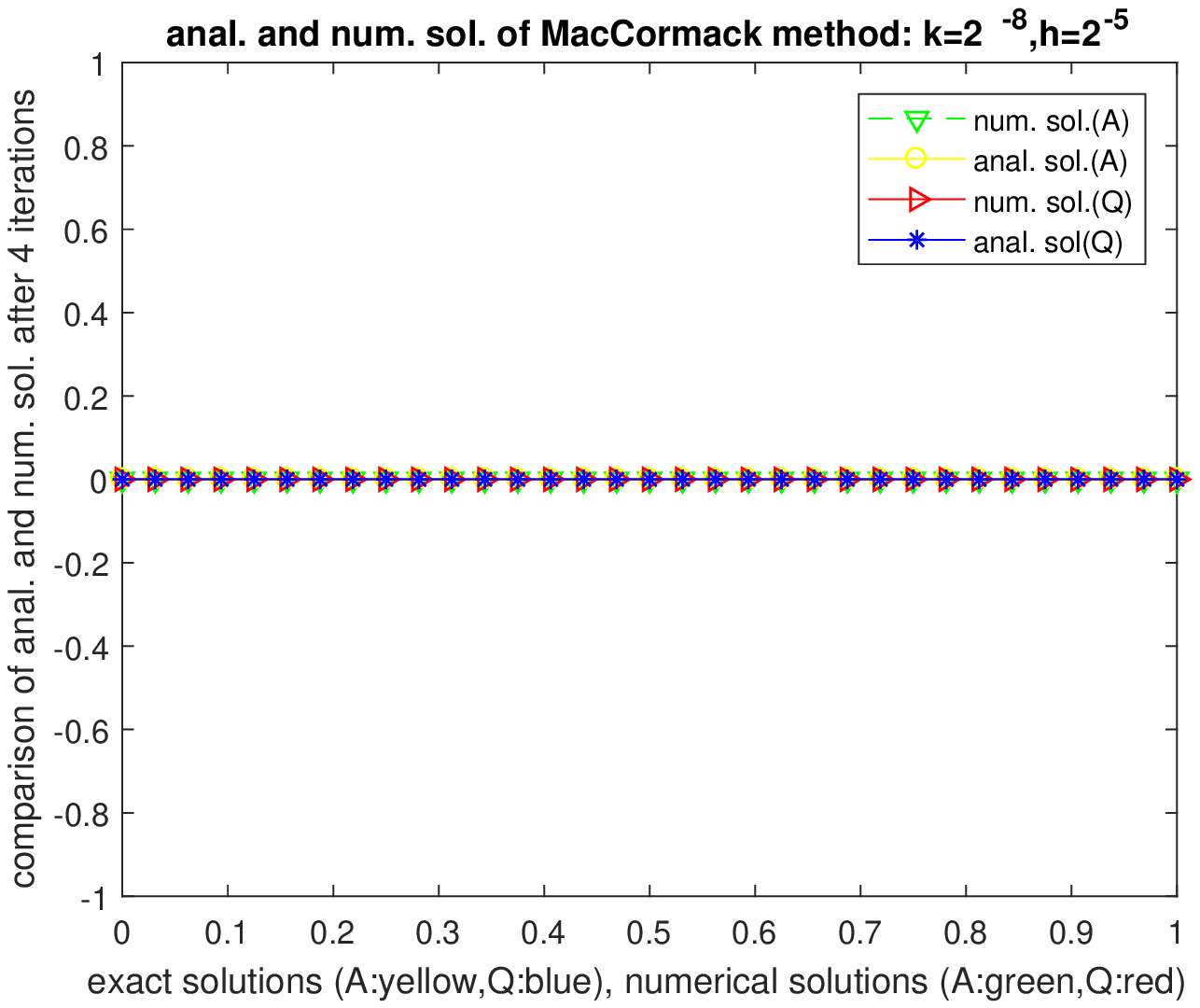,width=7cm} & \psfig{file=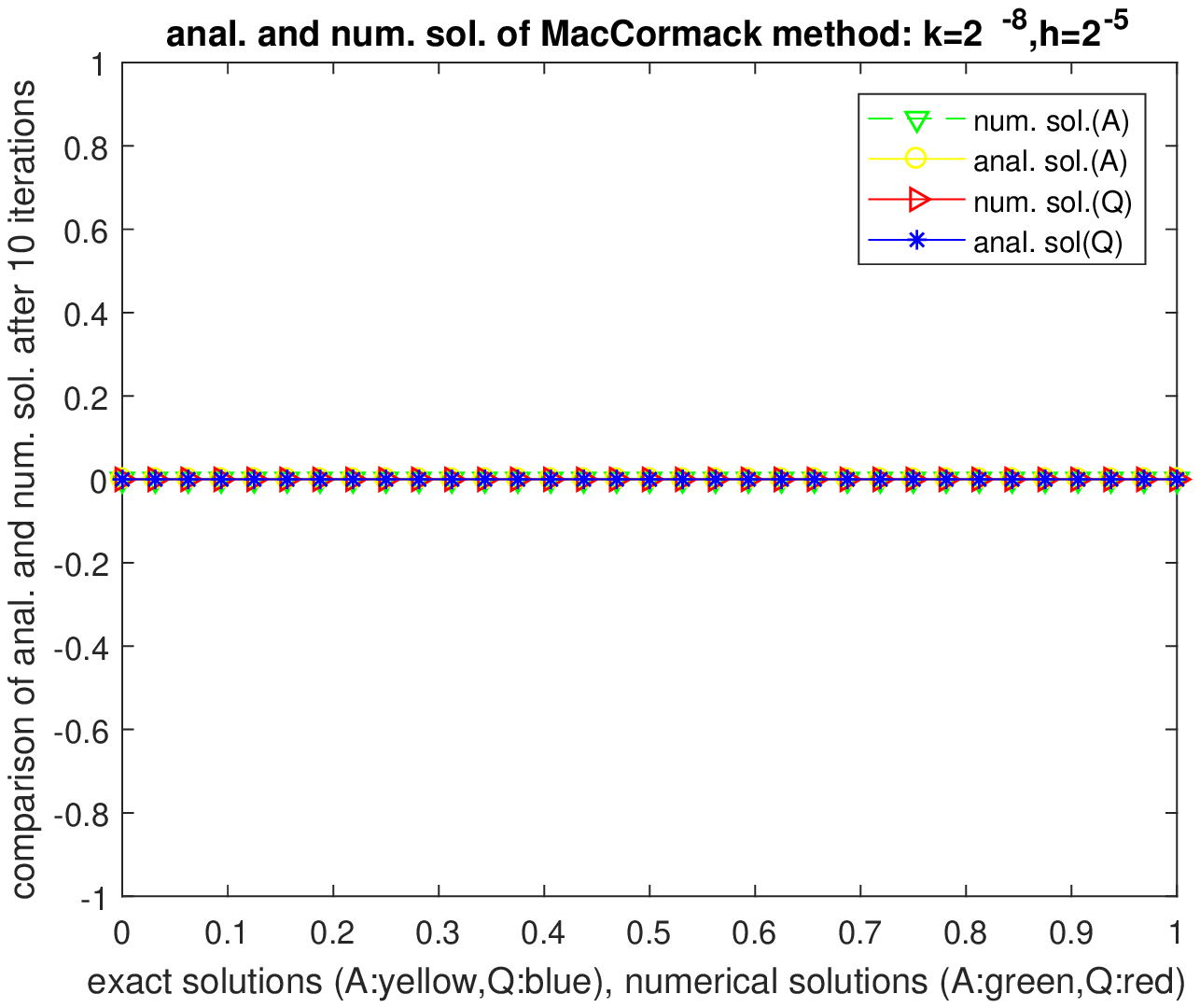,width=7cm}\\
           $ 0\leq x(m)\leq 1$ & $0\leq x(m)\leq 1$
         \end{tabular}
        \end{center}
        \caption{Stability analysis and convergence rate of MacCormack for shallow water equations with source terms.}
        \label{figure 2}
        \end{figure}

      \begin{figure}
     \begin{center}
      Graphs of cross section and discharge for shallow water flow.
      \begin{tabular}{c c}
         \psfig{file=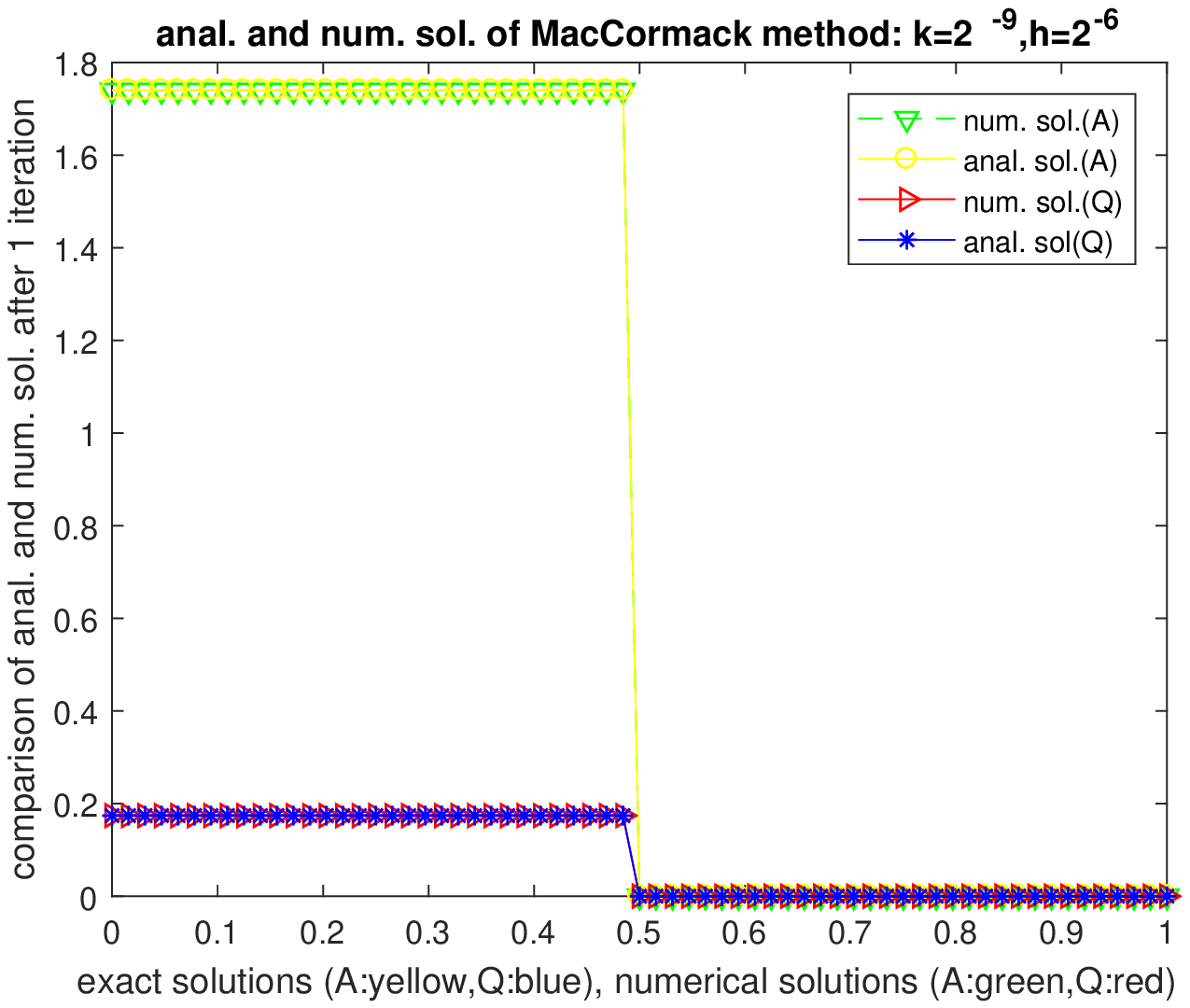,width=7cm}  & \psfig{file=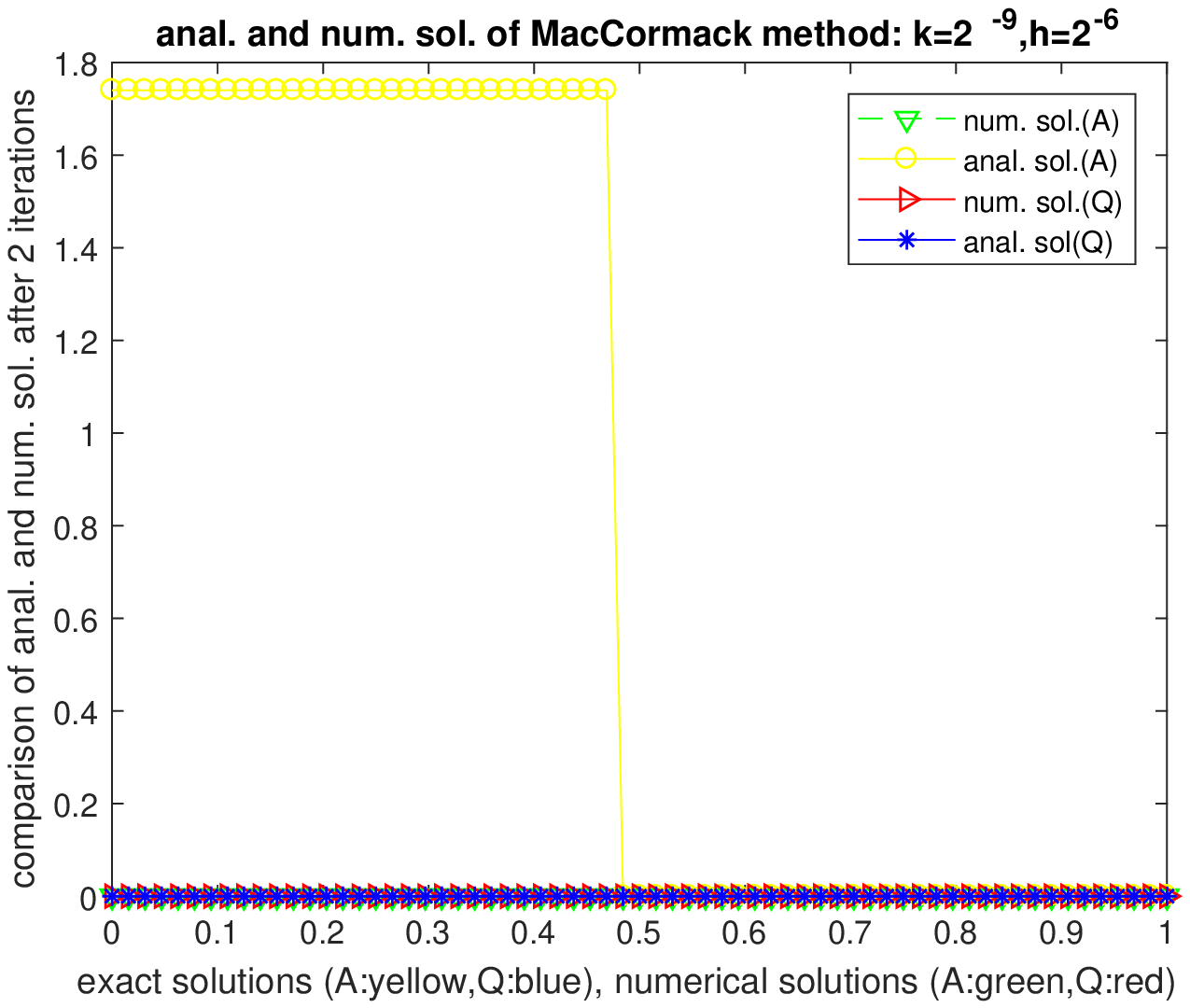,width=7cm}\\
         \psfig{file=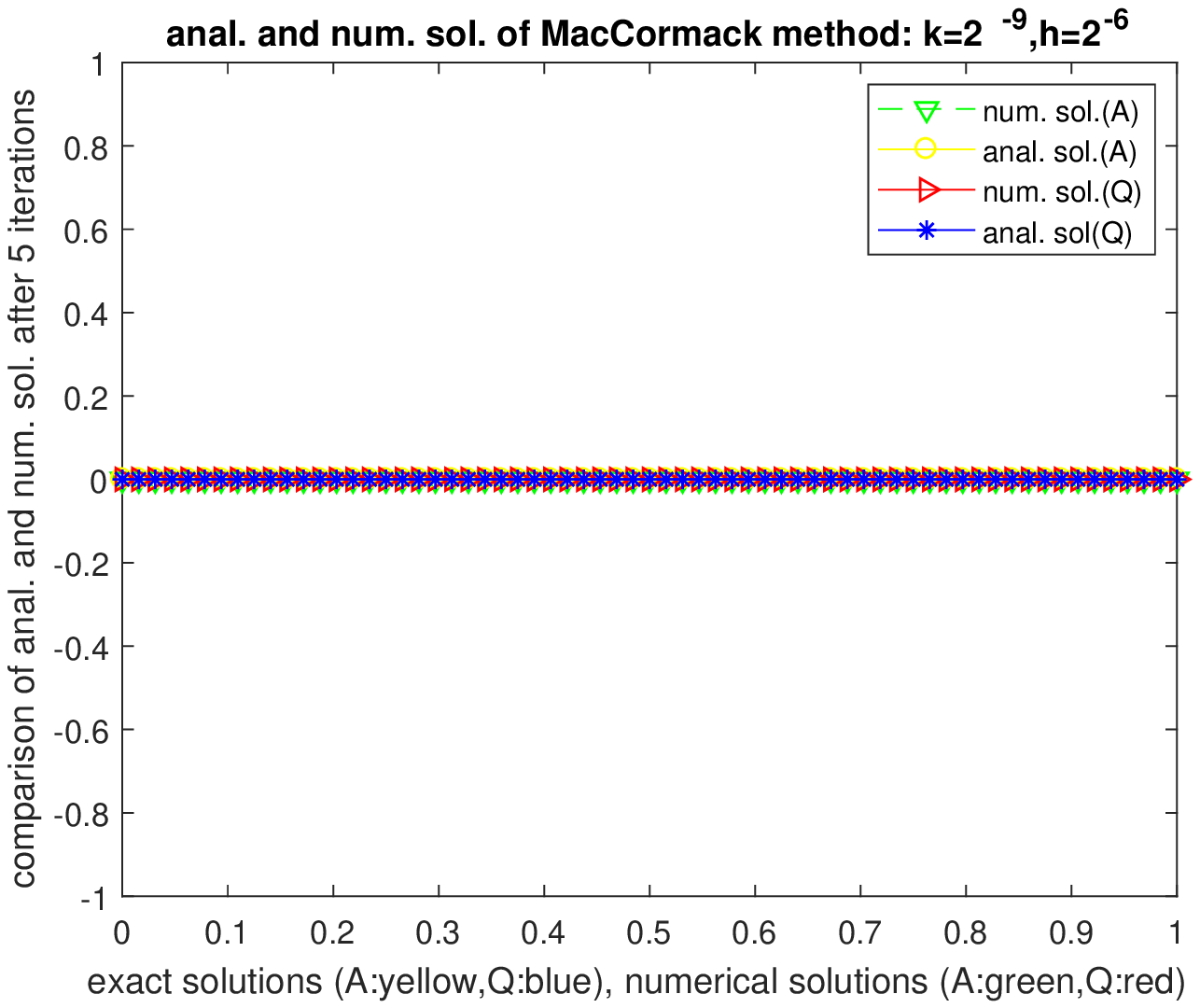,width=7cm} & \psfig{file=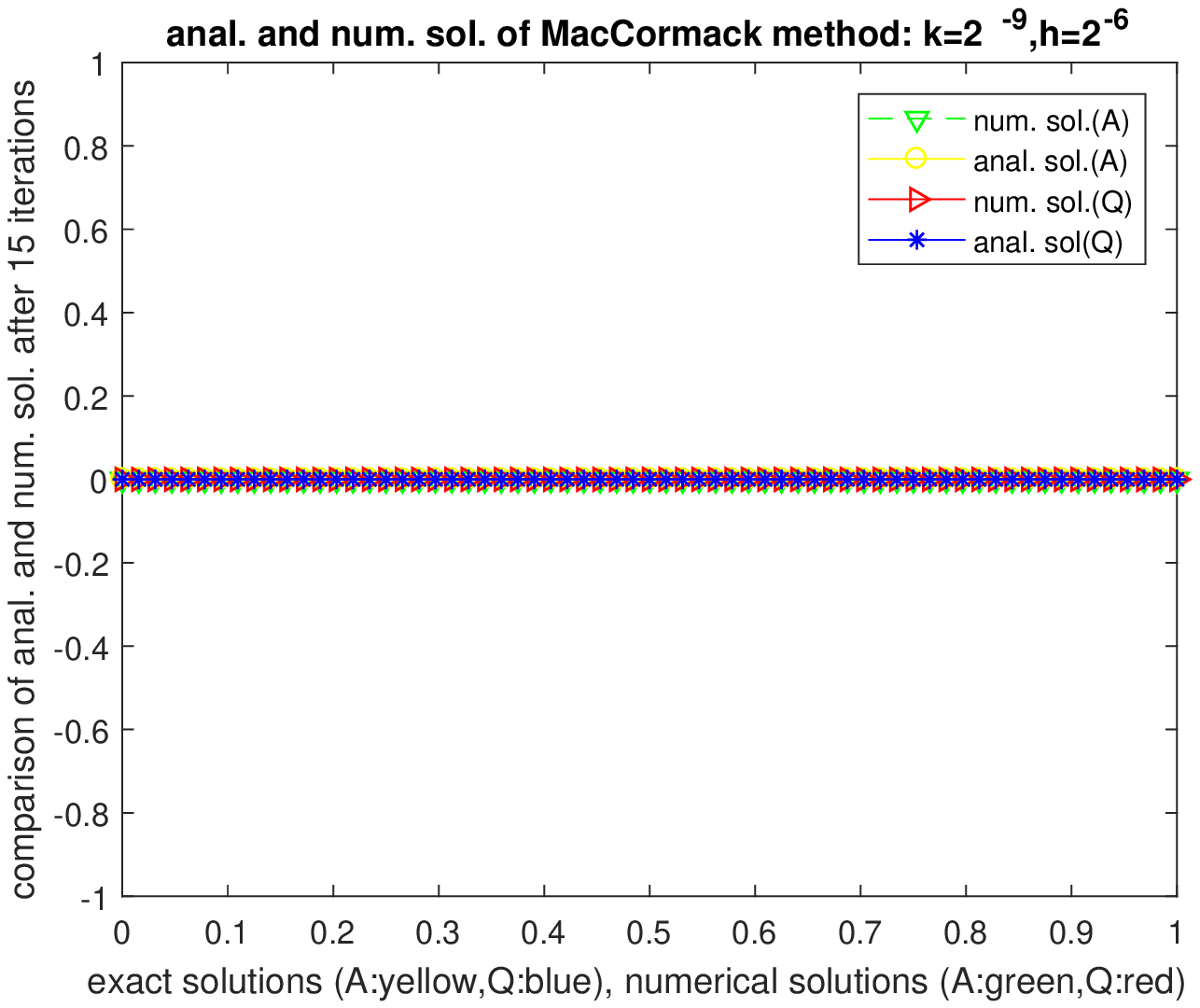,width=7cm}\\
         \psfig{file=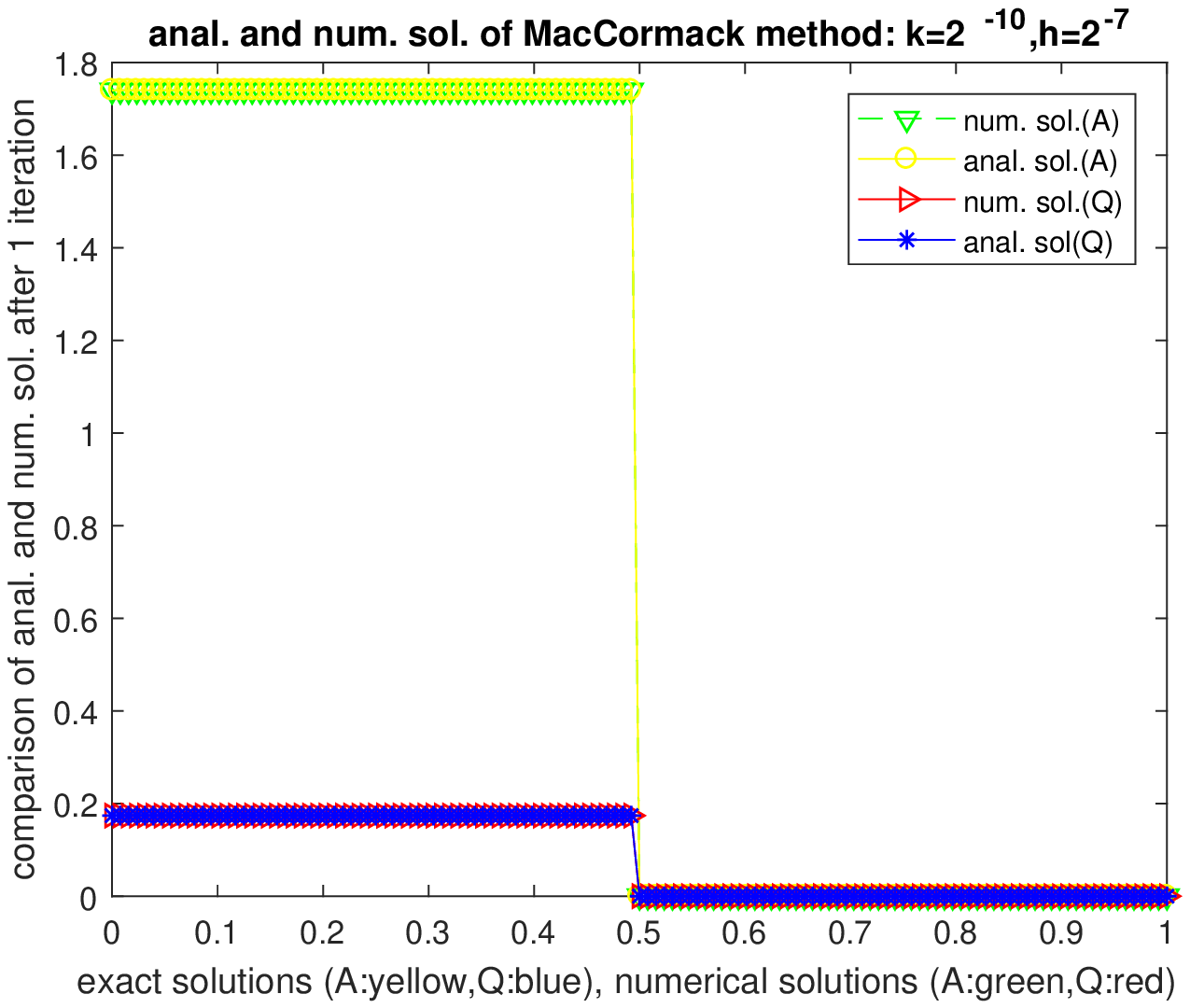,width=7cm} & \psfig{file=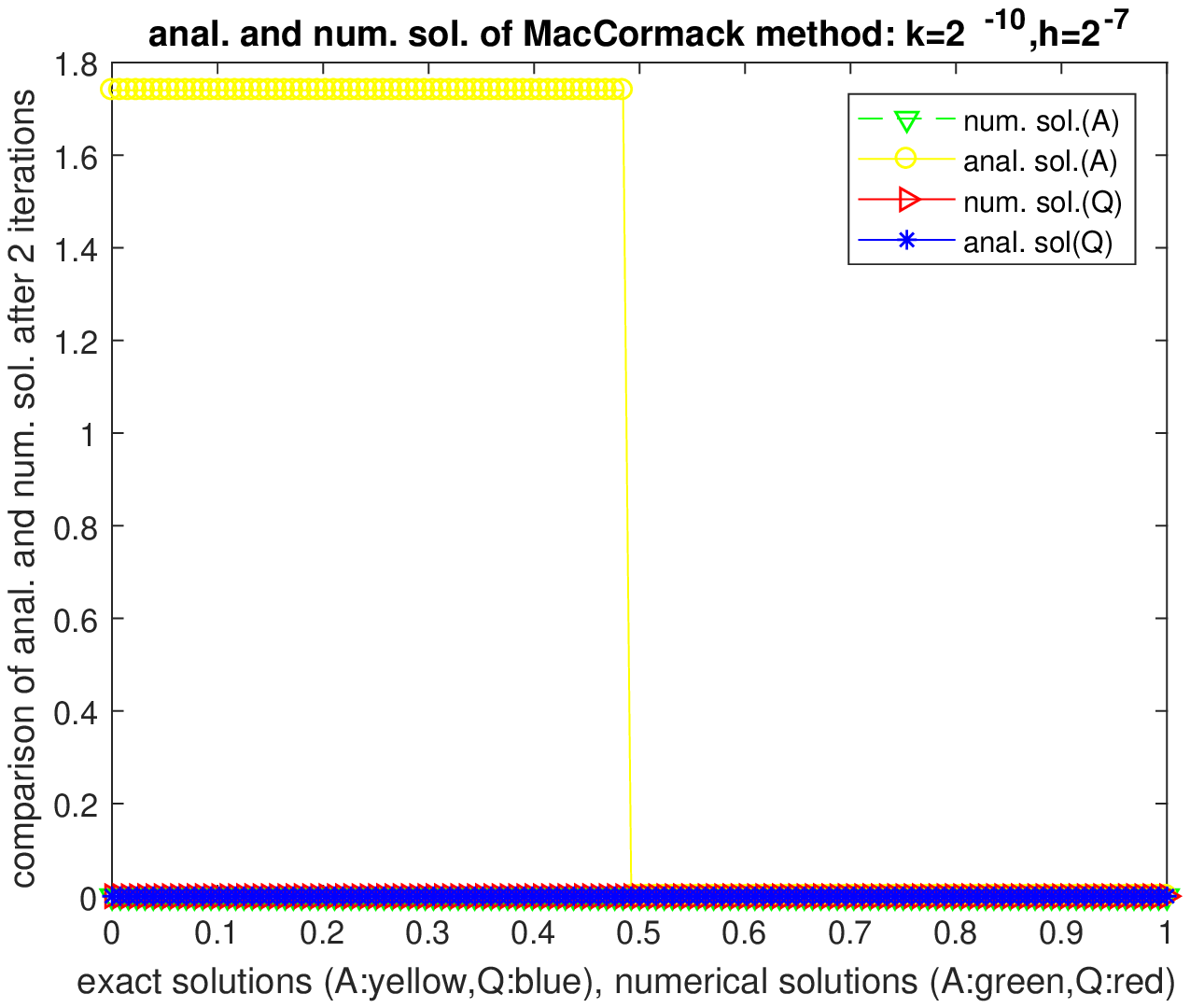,width=7cm}\\
         \psfig{file=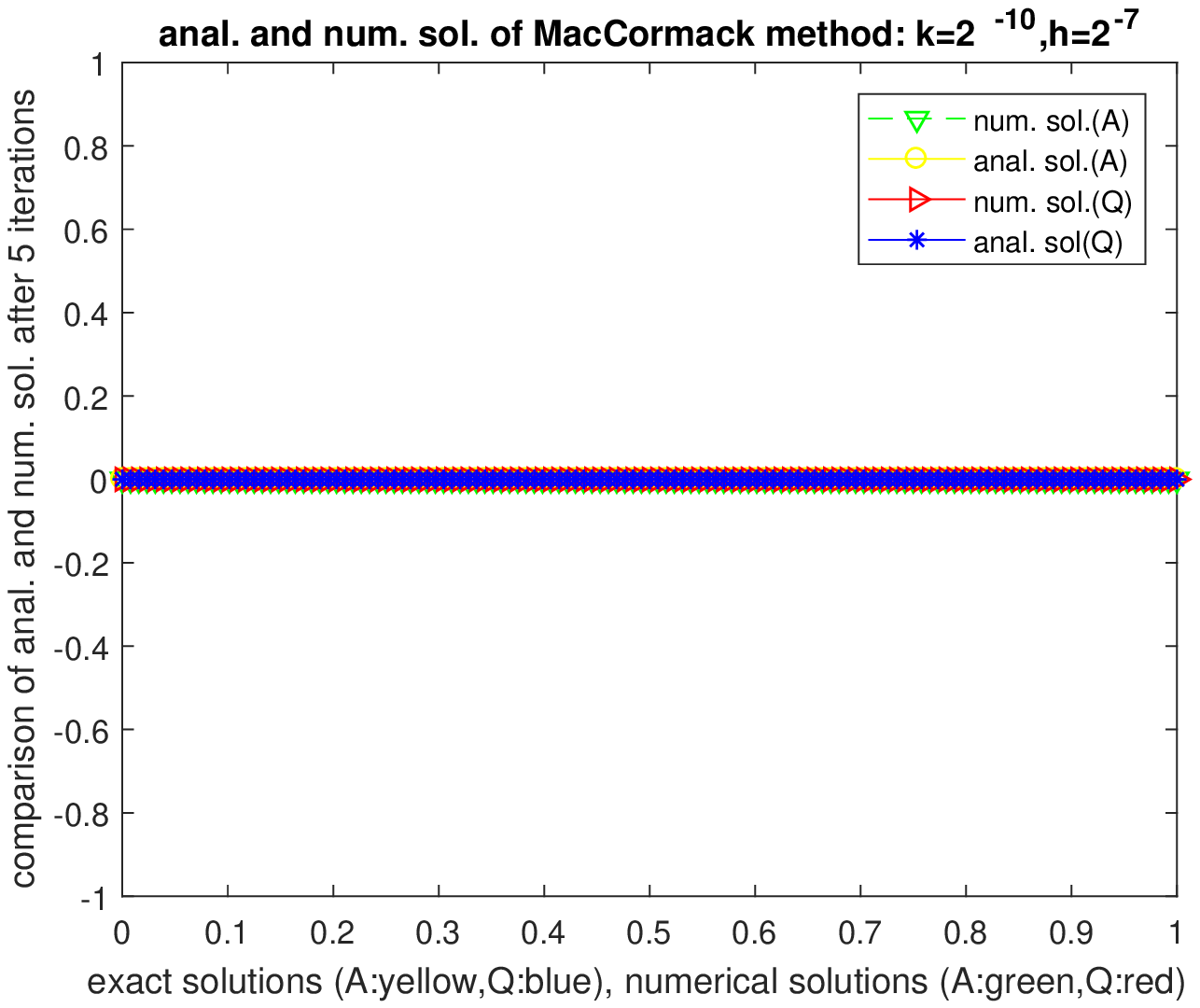,width=7cm} & \psfig{file=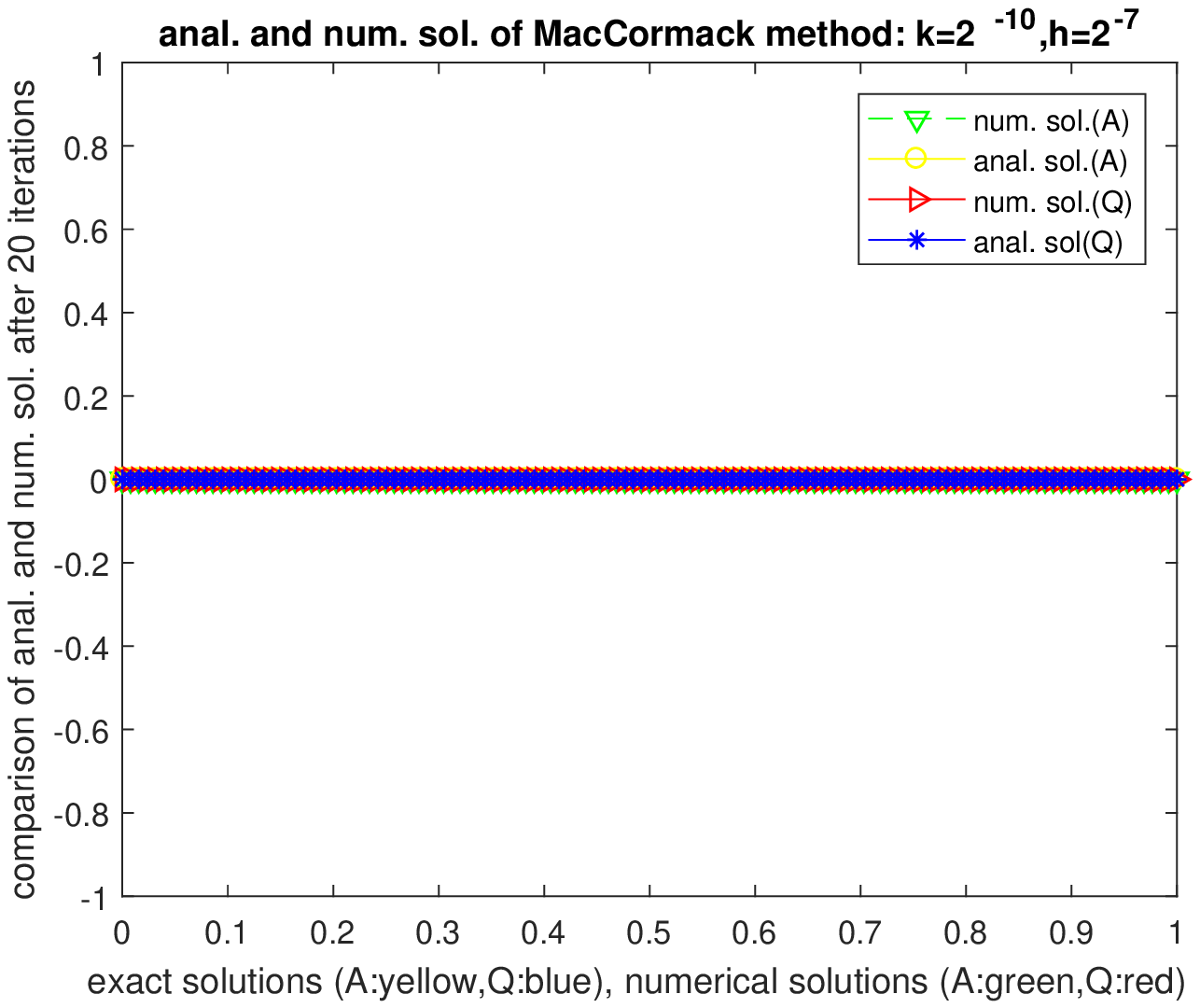,width=7cm}\\
         $ 0\leq x(m)\leq 1$ & $0\leq x(m)\leq 1$
         \end{tabular}
        \end{center}
         \caption{Stability analysis and convergence rate of MacCormack for shallow water equations with source terms.}
          \label{figure 3}
          \end{figure}

     \end{document}